\documentclass[a4paper,10pt]{amsart}
\usepackage{a4wide,amsmath,bbm,mathtools,mathrsfs,amsfonts,fancyhdr,color, ifthen, graphicx,stmaryrd, longtable,amssymb,latexsym,amsxtra,amscd,epic, array, multirow}

\DeclareMathOperator{\MC}{MC}

\DeclareMathOperator{\sign}{sign}
\DeclareMathOperator{\MH}{MH}
\DeclareMathOperator{\J}{\textbf{J}}
\DeclareMathOperator{\rk}{rk}

\DeclareMathOperator{\GL}{GL}

\DeclareMathOperator{\Sp}{Sp}

\DeclareMathOperator{\SO}{SO}

\DeclareMathOperator{\Sym}{Sym}

\DeclareMathOperator{\Sol}{Sol}
\newtheorem{defin}{Definition}[section]
\newtheorem{thm}[defin]{Theorem}
\newtheorem{lemma}[defin]{Lemma}

\newtheorem{prop}[defin]{Proposition}

\begin{document}
\thispagestyle{empty}
\flushleft
\title{Some fourth order CY-type operators with non symplectically rigid monodromy}
\author{Michael Bogner and Stefan Reiter}
\address{Michael Bogner, Institut f\"ur Mathematik, Johannes Gutenberg-Universit\"at Mainz, Staudingerweg 9, 55128 Mainz, Germany.}
\address{Stefan Reiter, Lehrstuhl IV f\"ur Mathematik, Universit\"at Bayreuth, 95440 Bayreuth}
\keywords{Calabi--Yau operators, local systems, rigidity, middle convolution, Hadamard product}
\maketitle

\begin{abstract}
We study tuples of matrices with rigidity index two in $\Sp_4(\mathbb{C})$, which are potentially induced by differential operators of Calabi-Yau type. The constructions of those monodromy tuples via algebraic operations and middle convolutions and the related constructions on the level differential operators lead to previously known and new examples.
\end{abstract}

\section{Introduction}

Fourth order differential operators of Calabi-Yau type intend to describe periods of one-parameter families of Calabi-Yau threefolds over $\mathbb{P}^1$ minus a finite set of points which admit a point of maximally unipotent monodromy at the origin $z=0$. The most prominent example is the Picard-Fuchs operator for the mirror of a family of quintics in $\mathbb{P}^4$ which was investigated by Candelas et al. in \cite{Can}. Their amazing results concerning the prediction of Gromov-Witten invariants of the family was for many mathematicians a reason to study mirror symmetry. During the last years, a characterization of those differential operators from a purely differential-algebraic point of view as well as a collection of examples was established, see e.g. \cite{AESZ}, \cite{Alm} and \cite{Diss}. This leads to the notion of a \textit{CY-type differential operator}, see Definition \ref{CYdefin}. Those operators are irreducible, self-dual, Fuchsian and their local solutions underly further integrality conditions. The majority of known examples of order four was found by computer searches and is - from a geometric point of view - still purely understood. A step towards a geometric realization can be provided by checking whether the differential operator can be constructed from differential operators of lower order by operations which are known to preserve their so called \textit{geometric origin}, see e.g. \cite[Chapter II]{Andre} for more details. 
For a given differential operator $L$ with finite singular locus $S\subset \mathbb{P}^1$, this can be done by looking at the associated local system $\mathbb{L}$ whose sections are given by $\mathbb{L}(U)=\left\{f\in\mathcal{O}_{\mathbb{P}^1\setminus S}\mid L(f)=0\right\}.$ 
If this local system is physically rigid in the sense of \cite{katz96}, it can be constructed by a series of middle convolutions and tensor products of Kummer sheaves and hence inherits a geometric interpretation. Related constructions on the level of differential operators where established \cite{BoR}. 
Moreover, we showed in \cite{BoR} that symplectically rigid local systems which are potentially induced by differential operators of CY-type admit constructions which involve more general tensor operations. Carrying out these constructions on the level of differential operators explicitely, we were able to regain all known examples of CY-type operators which induce such a local system. In that spirit and by the notion of the \textit{index of rigidity} of a local system, this article can be seen as a continuation of \cite{BoR}, as it is devoted to potential local systems of CY-type operators with index two in $\Sp_4(\mathbb{C})$. 
As in \cite{BoR} or \cite{DRGalois}, we rather use a more explicit framework for our constructions. Given a local system $\mathbb{L}$ of rank $n$ on $\mathbb{P}^1\setminus S$, we get an associated tuple of $n\times n$-matrices $T=(T_1,\dots,T_{r+1})\in \GL_n(\mathbb{C})$ with $\prod_{i=1}^{r+1}T_i=\mathbbm{1}_n$ by the choice of an orientation on $\mathbb{P}^1$, a base point $x_0\in\mathbb{P}^1\setminus S$ and a set of generators $\gamma_1,\dots,\gamma_{r+1}\in\pi_1(\mathbb{P}^1\setminus S, x_0)$ such that $\gamma_1\circ\dots\circ\gamma_{r+1}$ is homotopic to the trivial path. All those underlying topological choices induce a equivalence of categories and hence enable us to work with the related tuple $T$, a so called \textit{M-tuple}. Throughout this article, we are often not specific concerning those underlying choices. As many properties of an M-tuple can be read off by the Jordan forms of its matrices, we also collect the tuple of Jordan forms associated to an M-tuple.

In the first section, we recall some operations on tuples of matrices and differential operators as established in \cite{DRGalois}, \cite{DRFuchs} and \cite{BoR}. Thereafter, we state possible tuples of Jordan forms which are induced CY-type operators with index two. In turns out, that each of those operators admits either three or four non-apparent singularities. Although we only find some exceptional examples in the case of three singularities, we provide a construction of a inducing family of differential operators for each of the other cases in the second section. The constructions rely on special differential operators of order two, so called \textit{Heun operators}. The third section is devoted to the detection of CY-type operators inside the families we constructed before. Due to our observations, we end up with a CY-type operator if the operator of order two we started with is of CY-type as well. All of the examples of suitable CY-type operators of order two we know are provided by algebraic pullbacks of hypergeometric differential operators and the majority of them even has a direct geometric interpretation as Picard-Fuchs operators of families of elliptic curves with four singular fibers, see \cite{Her}. In this sense, the resulting CY-type operator also inherits a geometric interpretation which should be subject of further studies. Via these methods, we are able to reconstruct all previously known examples of fourth order differential CY-type operators of this type and provide new examples as well. A major part of the results of this article was developed in the first authors PhD-thesis \cite{Diss}. 
  
\vspace{2ex}

\textbf{Acknowledgements} We thank Duco van Straten for constant interest, support and suggestions concerning the content of this article. We are also indebted to Gert Almkvist for many inspiring conversations and attempts concerning the art of finding CY-type equations as well as his great effort to identify them.  

\section{Operations}
\subsection{Operations on tuples of matrices}

We fix some notations and conventions. 
In the sequel, we consider $G\subset \GL_n(\mathbb{C})$ to be an irreducible reductive linear algebraic group. 
A tuple of matrices $T=(T_1,\dots,T_{r+1})\in G^{r+1}$ is called an \textit{M-tuple} if 
\[\prod_{i=1}^{r+1}T_i=\mathbbm{1}_n\] holds. The matrix $T_i$ is called the \textit{i-th element} of the tuple. The \textit{rank} $\rk(T)$ of $T$ equals $n$. If $G$ is a symplectic or an orthogonal group, we call $T$ \textit{symplectic}, resp. \textit{orthogonal}, as well. 
Two M-tuples $T=(T_1,\dots,T_{r+1})$ and $T'=(T'_1,\dots,T'_{r+1})$ are \textit{equivalent} if there is an $H\in\GL_n(\mathbb{C})$ such that $H^{-1}T_iH=T'_i$ holds for all $1\leq i\leq r+1$.
We do not distinguish between M-tuples which are equivalent to each other. Moreover, we call two tuples of Jordan matrices to be \textit{similar}, if they coincide up to permutation of their matrices and tensor products with M-tuples of rank one.
For a matrix $A\in G$, we put
\[\gamma(A):=\rk(A-\mathbbm{1}_n)\] and
\[\delta_G(A):=\textrm{codim}(C_G(A)),\] the codimension w.r.t. $G$ of the centralizer $C_G(A)$ of $A$ in $G$.
Its Jordan form in $\GL_n(\mathbb{C})$ is denoted by $\J(A)$. For a given M-tuple $T$, we call
\[\J(T):=\left(\J(T_1),\dots,\J(T_{r+1})\right)\in\GL_n(\mathbb{C})^{r+1}\] the associated \textit{tuple of Jordan matrices}. Note that this tuple usually is not an M-tuple.
We study tuples of matrices of the following type

\begin{defin}
An arbitrary tuple of matrices $A=(A_1,\dots,A_{r+1})\in\GL_n(\mathbb{C})^{r+1}$ is called \textbf{of CY-type} if 
\begin{enumerate}
 \item $A$ is symplectic for $n$ even and orthogonal for $n$ odd.
\item all elements of $A$ are quasi-unipotent.
\item one of its elements $A_i$ is maximally unipotent, i.e. $\J(A_i)=J(n)$.
\end{enumerate}
 \end{defin}

Note that if $T$ is tuple of CY-type, its associated tuple of Jordan forms $\J(T)$ also is.

As CY-type operators are irreducible, we are mainly interested in \textit{irreducible} M-tuples, i.e. M-tuples whose elements generate an irreducible subgroup of $\GL_n(\mathbb{C})$. Irreducible M-tuples underly the following condition taken from \cite[Theorem 1]{Scott}.

 \begin{lemma}\label{ScottFormula}
 Each irreducible M-tuple $T=(T_1,\dots,T_{r+1})\in G^{r+1}$ of rank $n$ 
fulfills 
\[\sum_{i=1}^{r+1}\gamma(T_i)\geq 2n.\]
\end{lemma}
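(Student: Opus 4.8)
The plan is to realize the statement as a special case of Scott's cohomological inequality and to read off the constant $2n$ from irreducibility. Write $V=\mathbb{C}^n$, let $G=\langle T_1,\dots,T_{r+1}\rangle$ act on $V$, and observe that $\gamma(T_i)=\rk(T_i-\mathbbm{1}_n)=n-\dim\Kern(T_i-\mathbbm{1}_n)=\dim\Bild(T_i-\mathbbm{1}_n)$. Hence the claim is equivalent to $\sum_i\dim\Bild(T_i-\mathbbm{1}_n)\ge 2n$. I would prove the sharper inequality
\[\sum_{i=1}^{r+1}\dim\Bild(T_i-\mathbbm{1}_n)\ \ge\ 2n-\dim V^{G}-\dim (V^{*})^{G},\]
where $V^{G}$ and $(V^{*})^{G}$ denote the invariants of $V$ and of its contragredient. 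Irreducibility of $T$ then forces both correction terms to vanish: $V^{G}$ and $(V^{*})^{G}$ are $G$-submodules of irreducible nontrivial modules, hence $0$, which leaves exactly $2n$.

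The core is a two-term complex built from the relation $\prod_iT_i=\mathbbm{1}_n$. First I introduce
\[\alpha\colon V\longrightarrow \bigoplus_{i=1}^{r+1}\Bild(T_i-\mathbbm{1}_n),\qquad \alpha(v)=\bigl((T_i-\mathbbm{1}_n)v\bigr)_{i},\]
whose kernel is $\bigcap_i\Kern(T_i-\mathbbm{1}_n)=V^{G}$, so $\dim\Bild\alpha=n-\dim V^{G}$. Second, I use the relation to define
\[\beta\colon \bigoplus_{i=1}^{r+1}\Bild(T_i-\mathbbm{1}_n)\longrightarrow V,\qquad \beta(u_1,\dots,u_{r+1})=\sum_{i=1}^{r+1}T_1\cdots T_{i-1}\,u_i,\]
with the empty product read as $\mathbbm{1}_n$. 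A direct telescoping computation $\beta(\alpha(v))=\sum_i(T_1\cdots T_i-T_1\cdots T_{i-1})v=(T_1\cdots T_{r+1}-\mathbbm{1}_n)v=0$ shows $\Bild\alpha\subseteq\Kern\beta$, so the two maps genuinely form a complex, and this is precisely the point where the M-tuple relation $\prod_iT_i=\mathbbm{1}_n$ enters.

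Counting dimensions in the middle term gives $\sum_i\gamma(T_i)=\dim\Kern\beta+\dim\Bild\beta\ge\dim\Bild\alpha+\dim\Bild\beta=(n-\dim V^{G})+\dim\Bild\beta$, so it remains to show $\dim\Bild\beta\ge n-\dim(V^{*})^{G}$. This is the step I expect to be the main obstacle, as it is the second place where the relation must be exploited, now on the dual side. Concretely I would identify the annihilator of $\Bild\beta$ in $V^{*}$: a functional $f$ kills $\Bild\beta$ iff $(T_1\cdots T_{i-1})^{\top}f$ annihilates $\Bild(T_i-\mathbbm{1}_n)$ for every $i$, i.e. is fixed by $T_i^{\top}$. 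Setting $f_i=(T_1\cdots T_{i-1})^{\top}f$ one has $f_{i+1}=T_i^{\top}f_i$, so the fixed-point conditions collapse to $f_1=\cdots=f_{r+2}$, equivalent to $T_i^{\top}f=f$ for all $i$. Hence $\operatorname{Ann}(\Bild\beta)=(V^{*})^{G}$ for the contragredient action and $\dim\Bild\beta=n-\dim(V^{*})^{G}$. Combining the two bounds yields the sharper inequality, and irreducibility gives $\sum_i\gamma(T_i)\ge 2n$. The only routine subtleties to verify are that $\alpha$ really lands in $\bigoplus_i\Bild(T_i-\mathbbm{1}_n)$ and that the invertible conjugating factors $T_1\cdots T_{i-1}$ do not affect the relevant dimensions.
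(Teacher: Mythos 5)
Your argument is correct and complete. The paper itself does not prove this lemma --- it is quoted directly from Scott's Theorem~1 --- and what you have written is precisely Scott's original argument: the two-term complex $V\xrightarrow{\alpha}\bigoplus_i\Bild(T_i-\mathbbm{1}_n)\xrightarrow{\beta}V$ with $\Kern\alpha=V^G$ and $\operatorname{Ann}(\Bild\beta)=(V^{*})^{G}$, giving $\sum_i\gamma(T_i)\ge 2n-\dim V^{G}-\dim(V^{*})^{G}$; your identification of the annihilator of $\Bild\beta$ via the telescoping functionals $f_i=(T_1\cdots T_{i-1})^{\top}f$ is exactly the right dual-side use of the relation. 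The only caveat, which is really a caveat about the lemma as stated rather than about your proof, is the degenerate case $n=1$ with all $T_i=\mathbbm{1}_1$: there the tuple is vacuously irreducible but trivial, $V^{G}=V$, and the inequality $\ge 2n$ fails, so your phrase ``irreducible \emph{nontrivial}'' is doing necessary work and should be stated as a hypothesis (it is automatic for $n\ge 2$, which is the only case the paper uses).
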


We also consider the rigidity index of tuples of matrices in $G$.

\begin{defin}
Consider a tuple of matrices $A\in G^{r+1}$. The positive integer
\[i_G(A):=\sum_{i=1}^{r+1}\delta_G(A_i) - 2(\dim G-\dim Z(G)),\] where $Z(G)$ denotes the center of $G$, is called the \textbf{index of rigidity} of $A$ in $G$.  
\end{defin}

If $T\in\GL_n(\mathbb{C})^{r+1}$ is an irreducible M-tuple, note that $i_{\GL_n(\mathbb{C})}(T)=0$ if and only if $T$ is linearly rigid, see e.g. \cite[Chapter 1]{katz96}.

For a given irreducible M-tuple $T\in G^{r+1}$, the main operations we are dealing with are \textit{tensor products} $T\otimes T'$ with other M-tuples $T'\in G^{r+1}$ and \textit{middle convolutions} $\MC_{\alpha}(T)$ where $\alpha\in\mathbb{C}^{*}$. Those operations are discussed in \cite{DRGalois}.
We just recall the related operations on the level of tuples of Jordan forms here.
Therefore, we denote in the sequel by $\alpha J(k)$ a Jordan block matrix of size $k$ with respect to the eigenvalue $\alpha$. The direct sum of two Jordan block matrices refers to a matrix splitting into those Jordan blocks. If a block $\alpha J(k)$ in such a decomposition appears with multiplicity $\nu$ we denote that by $\alpha J(k)^{\nu}$. All in all we may write
\[J=\bigoplus_{\alpha\in E(J)}\bigoplus_{k}\alpha J(k)^{\nu(\alpha,k)},\] where $E(J)$ is the set of eigenvalues of $J$.

We have a natural notion for the tensor product of two tuples $J=(J_1,\dots,J_{r+1})$ and $J'=(J'_1,\dots,J'_{r+1})$ of Jordan forms, namely
\[J\otimes J':=(\J(J_1\otimes J'_1),\dots,\J(J_{r+1}\otimes J'_{r+1})).\]
Very often, we take tensor products with rank one tuples of the form $(1,\dots,1,\alpha,1,\dots,1,\alpha^{-1},1,\dots)$ for some $\alpha\in\mathbb{C}^{*}$. Hence we denote by $K^j_i(\alpha)$ an M-tuple of rank one, which has $\alpha$ as its $i$-th element, $\alpha^{-1}$ as its $j$-th element and $1$ elsewhere. 

The middle convolution and the middle Hadamard product with special tuples of rank one on the level of Jordan forms are defined as follows:
\begin{defin}\label{JNF}
Consider an tuple of Jordan forms $J=(J_1,\dots,J_{r+1})\in \GL_n^{r+1}(\mathbb{C})$ and $\alpha\in\mathbb{C}^{*}$. Then we put
\[C_{\alpha}(J):=\sum_{i=1}^r\gamma(J_i)+\gamma\left(\alpha^{-1}J_{r+1}\right)-n,\  c_{\alpha,i}:=\gamma(J_i)+n-\gamma(\alpha J_i)\] for $1\leq i\leq r$ and $c_{\alpha,r+1}:=\gamma\left(\alpha^{-1}J_{r+1}\right)+n-\gamma(J_{r+1}).$ If $c_{\alpha,i}\leq C_{\alpha}(J)$ holds for all $1\leq i\leq r+1$, the tuple of Jordan matrices
\[\MC_{\alpha}(J):=(\MC_{\alpha}(J_1),\dots,\MC_{\alpha}(J_{r+1}))\]
 is defined via
\begin{align*}
 \MC_{\alpha}(J_i)=&
    \bigoplus_{\substack{\rho \in \mathbb{C} \setminus\{1,\alpha^{-1}\}}}\bigoplus_j\
         \alpha \rho J(j)^{v(\rho,j)}
    \bigoplus_{\substack{j\geq 2}} \alpha J(j-1)^{v(1,j)}
    \\&\quad\bigoplus J(j+1)^{v(\alpha^{-1},j)}\;
     \bigoplus J(1)^{C_{\alpha}(J)-c_{\alpha,i}(J)}        \\ 
\end{align*} 
 and
\begin{align*}
\MC_{\alpha}(J_{r+1})=&
    \bigoplus_{\rho \in \mathbb{C}\setminus\{1,\alpha\}}\bigoplus_j\ 
    \alpha^{-1} \rho J(j)^{v(r+1,\rho,j)}
    \bigoplus_{j\geq 2} J(j-1)^{v(\alpha,j)} \\
     &\quad\bigoplus \alpha^{-1} J(j+1)^{v(1,j)}
  \bigoplus \alpha^{-1} J(1)^{C_{\alpha}(J)-c_{\alpha,r+1}(J)}.
\end{align*}
Analogously, the middle Hadamard-product is defined via
\[\MH_{\alpha}(J)=\MC_{\alpha^{-1}}\left(J\otimes K^{r+1}_1(\alpha)\right)\]
\end{defin}
By \cite[Chapter 6]{katz96}, this operation is compatible with the middle convolution for irreducible M-tuples, i.e. we have
\[\J(\MC_{\alpha}(T))=\MC_{\alpha}(\J(T)).\]

The middle convolution $\MC_{\alpha}(T)$ with $\alpha\in\mathbb{C}^{*}\setminus\{1\}$ is an invertible operation which preserves irreducibility and rigidity of $T$, see \cite[Remark 3.1, Corollary 3.6 and Corollary 4.4]{DRGalois}.

\begin{prop}\label{ConstMDSPconv}
Consider an irreducible M-tuple $T=(T_1,\dots,T_{r+1})\in\GL_n(\mathbb{C})$ such that at least two matrices $T_i, T_j$ with $1\leq i,j\leq r$ are not the identity if $n=1$. Then for each $\alpha\in\mathbb{C}^{*}\setminus \{1\}$ we have that
\begin{enumerate}
\item $\MC_{\alpha}(T)$ is irreducible.
\item $\MC_{\alpha^{-1}}(\MC_{\alpha}(T))=T$.
\item $i_{\GL_n(\mathbb{C})}(T)=i_{\GL_n(\mathbb{C})}(\MC_{\alpha}(T))$.
\item The operation $\MC_{-1}$ turns symplectic into orthogonal and orthogonal into symplectic tuples.
\item If $T$ is orthogonal or symplectic, then
\[\MC_{\alpha\beta}\left(\MC_{(\alpha\beta)^{-1}}\left(T\otimes K^{r+1}_i(\alpha)\otimes K^{r+1}_j(\beta)\right)\otimes K^j_i(\alpha^{-1}\beta)\right)\otimes K^{r+1}_i\left(\beta^{-1}\right)\otimes K^{r+1}_j\left(\alpha^{-1}\right)\]
is orthogonal or symplectic.
\end{enumerate}
\end{prop}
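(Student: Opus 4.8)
The plan is to dispatch (1)--(3) by citation and to concentrate the real work on (4) and (5). Assertions (1), (2) and (3) are precisely the statements quoted immediately before the proposition, so I would simply invoke \cite[Remark 3.1, Corollary 3.6 and Corollary 4.4]{DRGalois}: for $\alpha\neq 1$ the middle convolution $\MC_\alpha$ preserves irreducibility, is inverted by $\MC_{\alpha^{-1}}$ on irreducible tuples, and leaves the index of rigidity unchanged. The extra hypothesis for $n=1$ (two of $T_1,\dots,T_r$ non-trivial) only serves to exclude the degenerate rank-one case in which $\MC_\alpha$ need not be invertible.

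For (4) I would argue through the contragredient. Write $T^{\vee}$ for the dual tuple (entrywise transpose--inverse) and recall the standard compatibility $(\MC_\alpha(T))^{\vee}\cong\MC_{\alpha^{-1}}(T^{\vee})$. If $T$ is self-dual, i.e. orthogonal or symplectic, then $T^{\vee}\cong T$, so for $\alpha=-1$ we get $(\MC_{-1}(T))^{\vee}\cong\MC_{-1}(T^{\vee})\cong\MC_{-1}(T)$; as $\MC_{-1}(T)$ is irreducible by (1), Schur's lemma shows its invariant form is unique up to scalars and hence of a well-defined symmetry type $\epsilon\in\{\pm1\}$. It remains to see this type is the opposite of that of $T$, and here I would use the explicit invariant bilinear form available in the Dettweiler--Reiter construction (see \cite{DRGalois}): writing out the form induced on $\MC_{-1}(T)$ from a $T$-invariant form of symmetry $\epsilon$, the convolution parameter $\alpha=-1$ enters with a sign, so the induced form has symmetry $-\epsilon$. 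This sign computation is the only real content of (4).

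For (5), set $\gamma=\alpha\beta$ and build the tuple $R$ in stages:
\begin{align*}
U&=T\otimes K^{r+1}_i(\alpha)\otimes K^{r+1}_j(\beta),\quad S=\MC_{\gamma^{-1}}(U),\quad V=S\otimes K^{j}_i(\alpha^{-1}\beta),\\
W&=\MC_{\gamma}(V),\quad R=W\otimes K^{r+1}_i(\beta^{-1})\otimes K^{r+1}_j(\alpha^{-1}).
\end{align*}
Since $R$ is irreducible by (1), it suffices to prove that $R$ is self-dual; the symmetry type of the then unique (up to scalar) invariant form decides whether $R$ is orthogonal or symplectic. I would establish self-duality by transporting an invariant form along the construction. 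The first convolution carries the canonical convolution pairing between $S=\MC_{\gamma^{-1}}(U)$ and the dual of $\MC_{\gamma}(U^{\vee})$; since $T^{\vee}\cong T$ gives $U^{\vee}\cong T\otimes K^{r+1}_i(\alpha^{-1})\otimes K^{r+1}_j(\beta^{-1})$, this relates $S$ to $S^{\vee}$ up to a controlled rank-one twist. Tensoring with $K^{j}_i(\alpha^{-1}\beta)$ and applying the second convolution $\MC_{\gamma}$ propagates the pairing to $W$, and the final twists $K^{r+1}_i(\beta^{-1})\otimes K^{r+1}_j(\alpha^{-1})$ are exactly those needed to absorb the accumulated rank-one factors and leave a nondegenerate $R$-invariant form.

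The step I expect to be the main obstacle is precisely this matching. The formal computation $(\MC_\delta(X))^{\vee}\cong\MC_{\delta^{-1}}(X^{\vee})$ together with $(K^{r+1}_i(\alpha))^{\vee}=K^{r+1}_i(\alpha^{-1})$ shows only that $R^{\vee}$ is the same composite with $(\alpha,\beta)$ replaced by $(\alpha^{-1},\beta^{-1})$, so identifying $R^{\vee}$ with $R$ is where the work lies: it needs the commutation relations between $\MC_\delta$ and the tensor factors $K$ from \cite{DRGalois} to reconcile the two versions, together with a careful check that at each position $1\le k\le r+1$ the defining inequalities $c_{\delta,k}\le C_\delta$ of Definition \ref{JNF} hold for both $\delta=\gamma$ and $\delta=\gamma^{-1}$, so that the convolutions are defined and, by (1), keep every intermediate tuple irreducible. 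Finally, tracking the sign of the transported form through the two convolution steps and the middle twist $K^{j}_i(\alpha^{-1}\beta)$ --- along the lines of the sign computation in (4) --- pins down the resulting symmetry type.
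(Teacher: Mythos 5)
The paper offers no proof of this proposition at all: it is stated as an immediate consequence of the results quoted in the sentence directly preceding it, namely \cite[Remark 3.1, Corollary 3.6 and Corollary 4.4]{DRGalois} for (1)--(3), with (4) and (5) likewise drawn from the Dettweiler--Reiter theory of invariant bilinear forms under middle convolution. Your handling of (1)--(3) therefore coincides exactly with the paper's. For (4) and (5) you go further and reconstruct how the cited results work: the duality $(\MC_{\alpha}(T))^{\vee}\cong\MC_{\alpha^{-1}}(T^{\vee})$, Schur's lemma for the uniqueness of the invariant form on an irreducible self-dual tuple, and the explicit pairing on the convolution module are indeed the right ingredients, and you correctly note that for (5) it suffices to prove self-duality of the resulting tuple $R$, since the conclusion only asserts ``orthogonal or symplectic'' without specifying which. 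The one substantive step you name but do not carry out is the identification of $R^{\vee}$ --- which your formal duality computation correctly exhibits as the same five-stage composite with $(\alpha,\beta)$ replaced by $(\alpha^{-1},\beta^{-1})$ --- with $R$ itself; this genuinely requires the commutation relations between $\MC_{\delta}$ and the rank-one twists $K^{j}_{i}(\cdot)$, $K^{r+1}_{k}(\cdot)$ from \cite{DRGalois}, together with the check that the intermediate convolutions are defined, and it does not follow formally. The same remark applies to the sign flip in (4), which you defer to ``writing out the form.'' Since the paper absorbs precisely these points into its citation, your proposal sits at the same level of completeness as the paper; just be aware that, as a self-contained argument, both the sign computation in (4) and the reconciliation of $R^{\vee}$ with $R$ in (5) remain to be written out rather than merely located.
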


The second and third statement also hold if we replace $T$ by $\J(T)$.

Finally, we also consider the \textit{square} of a tuple of Jordan forms $J=(J_1,\dots,J_{r+1})$ which is simply given by
\[J^2:=(\J(J_1^2), J_2, \dots,J_r, J_2,\dots,J_r, \J(J_{r+1}^2)).\]
In the geometric situation, this reflects the effect on the tuple of Jordan forms associated to a given local system under a degree two cover of the space it is defined on.

\subsection{Related operations on differential operators}

In this section, we review the translation of the constructions done on the level of monodromy tuples to the level of differential operators as it was done in \cite[Section 4, Section 5]{BoR} and \cite{Diss}.

We put as usual $\frac{d}{dz}$ to be the derivation on $\mathbb{C}[z]$ defined by $\frac{d}{dz}(z)=1$ and $\mathbb{C}[z,\partial]:=\mathbb{C}[z][\partial]$ to be the ring of differential operators with respect to $\frac{d}{dz}$. We will mainly use the so called \textit{logarithmic derivation} $z\frac{d}{dz}$ on $\mathbb{C}[z]$ and the ring of differential operators $\mathbb{C}[z,\vartheta]:=\mathbb{C}[z][\vartheta]$ with respect to $z\frac{d}{dz}$, which can naturally be regarded as a subring of $\mathbb{C}[z,\partial]$. We denote the degree of $L\in\mathbb{C}[z,\vartheta]$ with respect to $\vartheta$ by $\deg(L)$. Furthermore, we call $L=\sum_{i=0}^na_i\vartheta^i\in\mathbb{C}[z,\vartheta]$ to be \textit{reduced}, if all the $a_i$ are coprime in $\mathbb{C}[z]$. The polynomial $a_n$ is called the \textit{discriminant} of $L$. The set of roots of the discriminant together with the points $0$ and $\infty$ is the \textit{singular locus} $S$ of $L$. We can rearrange the coefficients of $L\in\mathbb{C}[z,\vartheta]$ and write $L=\sum_{i=0}^mz^iP_i$, with $P_i\in\mathbb{C}[\vartheta]$. The roots of $P_0$ are called the \textit{exponents} of $L$ at $z=0$. The exponents of $L$ at the other points $p\in\mathbb{C}$ and $p=\infty$ are defined in the same way via the transformation $z\mapsto z+p$, resp. $z\mapsto \frac{1}{z}$. As pointed out in the introduction, we can attach to each differential operator $L\in\mathbb{C}[z,\vartheta]$ with singular locus $S$ a local system $\mathbb{L}$ on $\mathbb{P}^1\setminus S$ and hence by further topological choices an M-tuple $T$ of matrices. In this situation, we usually call $T$ to be a \textit{monodromy tuple} of $L$.
 
There is a so called \textit{universal Picard-Vessiot ring} $\mathcal{F}$ of $\left(\mathbb{C}[z],z\frac{d}{dz}\right)$, i.e. a simple differential ring with field of constants $\mathbb{C}$ which contains all solutions of each differential operator $L\in\mathbb{C}[z,\vartheta]$. In particular, for each $L\in\mathbb{C}[z,\vartheta]$ the set $\Sol_L:=\{y\in\mathcal{F}\mid L(y)=0\}$ can be regarded as a $\deg(L)-$dimensional $\mathbb{C}$-vector space. Furthermore, a singularity $s$ of $L$ is \textit{regular} if each of its solutions $y\in\mathcal{F}$ can near $z=s$ be considered as element in $z^{\mu}\mathbb{C}\llbracket z\rrbracket[\ln(z)]$ where $\mu\in\mathbb{C}$. If all singularities $s_1,\dots,s_{r+1}$ of $L$ are regular, we call $L$ to be \textit{fuchsian}. In that case, we collect the local exponents $e_{1,i},\dots,e_{n,i}$ of $L$ at each of its singularities $s_i$ in the so called \textit{Riemann scheme}
\[\mathcal{R}\left(L\right)=\begin{Bmatrix}
s_1&\dots&s_{r+1}\\[0.1cm] \hline\\[-0.25cm]\begin{array}{c} e_{1,1}
\\\noalign{\medskip}\vdots
\\\noalign{\medskip}e_{n,1}\\\noalign{\medskip} \end{array}&\begin{array}{c} \dots
\\\noalign{\medskip}\vdots
\\\noalign{\medskip}\dots\\\noalign{\medskip} \end{array}
&\begin{array}{c} e_{1,r+1}
\\\noalign{\medskip}\vdots
\\\noalign{\medskip}e_{n,r+1}\\\noalign{\medskip} \end{array}
\end{Bmatrix}\]

Given two reduced differential operators $L_1,L_2\in \mathbb{C}[z,\vartheta]$, their \textit{tensor product} $L_1\otimes L_2\in \mathbb{C}[z,\vartheta]$ is the reduced operator of minimal degree, whose solution space is spanned by the set $\{y_1y_2\in\mathcal{F}\mid L_1(y_1)=L_2(y_2)=0\}$. Furthermore, if $g$ is algebraic over $\mathbb{C}(z)$ and spans the solution space of $L_2$, we write \[L_1\otimes L_2=:L_1\otimes g.\] If $T_i$ is the monodromy tuple associated to $L_i$, the monodromy tuple associated to $L_1\otimes L_2$ is a sub tuple of $T_1\otimes T_2$, i.e. this tuple can be achieved by restricting the elements $T_1\otimes T_2$ to a subspace of $\mathbb{C}^n$ which is set-wise invariant under all of them. In particular, we have $\deg(L_1\otimes L_2)\leq \deg(L_1)\deg(L_2)$.

To discuss the middle Hadamard product, we introduce for each $a\in\mathbb{Q}$ the operator \[I_a:=\vartheta-z(\vartheta+a)\in\mathbb{C}[z,\vartheta].\]
Moreover, writing $L=\sum_{i=0}^mz^iP_i\in\mathbb{C}[z,\vartheta]$, we put
\[\mathcal{H}_a(L):=\sum_{i=0}^mz^i\prod_{j=0}^{i-1}(\vartheta+a+j)\prod_{k=0}^{m-i-1}(\vartheta-k)P_i.\]
If $L$ is fuchsian, irreducible and admits no local exponent in $\mathbb{Z}_{<0}$ at each $p\in\mathbb{P}^1\setminus\{0\}$ and no local exponent in $1-a+\mathbb{Z}_{<0}$ at $z=0$
then the \textit{middle Hadamard product} $L\star_H I_a$ of $L$ and $I_a$ is given by the unique irreducible right factor of $\mathcal{H}_a(L)$ of degree
\[\deg(L\star_H I_a)=\sum_{i=2}^{r+1}\gamma(T_i)-n+\gamma(\exp(2\pi ia)T_{1}),\] where $T=(T_1,\dots,T_{r+1})$ denotes a monodromy tuple associated to $L$ such that $T_1$ is the local monodromy at $z=0$ and $T_{r+1}$ the local monodromy at $z=\infty$.

Furthermore, we investigate the natural action of rational functions for fuchsian differential operators $\mathbb{C}[z,\vartheta]$. As $\mathbb{C}[z,\vartheta]$ is a differential subring of $\mathbb{C}(z)[\partial]$ and for each fuchsian $L\in\mathbb{C}(z)[\partial]$ there is an $n\in\mathbb{N}$ such that $z^nL\in\mathbb{C}[z,\vartheta]$, we rather describe those actions on $\mathbb{C}(z)[\partial]$. More precisely, for each $\varphi\in\mathbb{C}(z)\setminus\mathbb{C}$ we define the action of $\varphi^{*}$ on $\mathbb{C}(z)[\partial]$ via
\[f\mapsto f\circ\varphi,\ \partial\mapsto \frac{1}{\varphi(z)'}\partial\] for each $f\in\mathbb{C}(z)$.

We will also use some algebraic transformations, namely the inverse of $z\mapsto z^n$ for $n\in\mathbb{N}$, if they are defined. To be more precise, we define the differential subring $\mathbb{C}\left[z^n,\vartheta\right]$ of $\mathbb{C}[z,\vartheta]$ to consist of
\[\mathbb{C}\left[z^n,\vartheta\right]=\left\{\sum_{i=0}^mz^{ni}P_i \mid P_i\in\mathbb{C}[\vartheta]\right\}\]
and set \[\left(z^{\frac{1}{n}}\right)^{*}\colon\mathbb{C}[z^n,\vartheta]\to\mathbb{C}[z,\vartheta],\ z^n\mapsto z,\ \vartheta\mapsto n\vartheta.\]

One can check locally, if an operator lies in $\mathbb{C}[z^n,\vartheta]$ using the following

\begin{lemma}\label{root}
Consider an irreducible, fuchsian differential operator $L\in\mathbb{C}[z,\vartheta]$ and $n\in\mathbb{N}$. Then $L\in \mathbb{C}[z^n,\vartheta]$ if and only if $L$ has a local solution $f\in z^{\mu}\mathbb{C}\llbracket z^n\rrbracket$ near $z=0$. 
\end{lemma}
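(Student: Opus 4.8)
The plan is to exploit the scaling automorphism $\sigma$ induced by $z\mapsto\zeta z$, where $\zeta=e^{2\pi i/n}$ is a primitive $n$-th root of unity. On $\mathbb{C}[z,\vartheta]$ this acts by $z^j\mapsto\zeta^j z^j$ and fixes $\vartheta$, since $\vartheta=z\frac{d}{dz}$ is scale-invariant. Writing $L=\sum_j z^jP_j(\vartheta)$ one gets $L^\sigma:=\sigma L\sigma^{-1}=\sum_j\zeta^j z^jP_j(\vartheta)$, so that $L\in\mathbb{C}[z^n,\vartheta]$ holds precisely when $P_j=0$ whenever $n\nmid j$, i.e. precisely when $L^\sigma=L$. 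Thus the lemma amounts to linking the $\sigma$-invariance of $L$ to the existence of a solution fixed, up to a scalar, by $\sigma$: indeed $\sigma(z^\mu g(z^n))=\zeta^\mu z^\mu g(z^n)$, so the series in $z^\mu\mathbb{C}\llbracket z^n\rrbracket$ are exactly the $\sigma$-eigenvectors among the Frobenius solutions at $0$.

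For the direction $L\in\mathbb{C}[z^n,\vartheta]\Rightarrow$ solution in $z^\mu\mathbb{C}\llbracket z^n\rrbracket$, I would run the Frobenius method at $z=0$. Choosing an exponent $\mu$ (a root of the indicial polynomial $P_0$) of maximal real part guarantees $P_0(\mu+s)\neq0$ for all $s\geq1$, so the Ansatz $f=z^\mu\sum_{s\geq0}c_sz^s$ with $c_0=1$ determines the $c_s$ uniquely by the recurrence $c_sP_0(\mu+s)=-\sum_{j\geq1}c_{s-j}P_j(\mu+s-j)$. Since $P_j=0$ unless $n\mid j$, every index occurring on the right is $\equiv s\pmod n$, and an easy induction on $s$ shows $c_s=0$ whenever $n\nmid s$; hence $f\in z^\mu\mathbb{C}\llbracket z^n\rrbracket$, and fuchsianity ensures convergence near $0$.

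For the converse I would combine irreducibility with greatest common right divisors. If $f\in z^\mu\mathbb{C}\llbracket z^n\rrbracket$ solves $L$, then applying $\sigma$ to $Lf=0$ and using $\sigma f=\zeta^\mu f$ gives $L^\sigma f=0$, so the nonzero $f$ lies in $\Sol_L\cap\Sol_{L^\sigma}=\Sol_{\GCRD(L,L^\sigma)}$, whence $\GCRD(L,L^\sigma)$ has positive order. As $L$ is irreducible and $\GCRD(L,L^\sigma)$ right-divides $L$, this common right divisor must carry the full order of $L$; the same applies to $L^\sigma$, yielding $L^\sigma=uL$ for some $u\in\mathbb{C}(z)^{*}$. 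Writing $L=\sum_i a_i(z)\vartheta^i$ and comparing coefficients of $\vartheta^i$, the relation reads $a_i(\zeta z)=u(z)a_i(z)$ for all $i$. Since $L$ is reduced the $a_i$ have trivial common factor, and the same holds after the automorphism $z\mapsto\zeta z$; writing $u=p/q$ in lowest terms then forces $q\mid\gcd_i a_i=1$ and $p\mid\gcd_i a_i(\zeta\cdot)=1$, so $u$ is a constant $\lambda$. Finally $a_i(\zeta z)=\lambda a_i(z)$ means $\zeta^j=\lambda$ for every power $z^j$ that actually occurs, so all occurring exponents share one residue $j_0$ mod $n$; reducedness rules out a common factor $z^{j_0}$ with $j_0>0$, forcing $\lambda=1$ and hence $L=L^\sigma\in\mathbb{C}[z^n,\vartheta]$.

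The main obstacle is the converse, and within it the passage from $L^\sigma=uL$ to $u\in\mathbb{C}^{*}$ and then to $\lambda=1$: this is exactly where irreducibility (to obtain a scalar multiple at all) and the reducedness normalisation (to pin down the common factor and to exclude $j_0>0$) are both indispensable, and one must keep the chosen branch of $\zeta^\mu$ and the extension of $\sigma$ to the Picard--Vessiot ring mutually consistent. The forward direction is routine once the maximal-real-part exponent has been selected.
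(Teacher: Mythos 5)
Your forward direction ($L\in\mathbb{C}[z^n,\vartheta]$ implies a solution in $z^{\mu}\mathbb{C}\llbracket z^n\rrbracket$) is essentially the paper's: both choose the exponent $\mu$ so that $P_0(\mu+s)\neq 0$ for all $s\geq 1$ and read off from the recurrence $\sum_i P_i(\nu-i)A(\nu-i)=0$ that all coefficients outside the residue class of $\mu$ modulo $n$ vanish (the paper phrases this as a contradiction at the minimal bad index $\nu_0$, you as an induction). For the converse the routes genuinely differ. The paper stays with the recurrence: given $f\in z^{\mu}\mathbb{C}\llbracket z^n\rrbracket$, only the terms with $n\mid i$ survive, so $f$ also solves the truncation $\tilde L=\sum_k z^{nk}P_{nk}\in\mathbb{C}[z^n,\vartheta]$; irreducibility makes $L$ a right factor of $\tilde L$ of the same degree, whence $L=\tilde L$. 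You instead compare $L$ with its twist $L^{\sigma}$ under $z\mapsto\zeta z$, use $\GCRD(L,L^{\sigma})$ and irreducibility to obtain $L^{\sigma}=uL$, and then force $u=1$ by a coefficient comparison. The two arguments are close cousins -- $\tilde L$ is just the average of the twists $L^{\sigma^k}$ -- and both lean on irreducibility at exactly the same point. Your version has the merit of making explicit where the reducedness normalisation enters: the statement is false for non-reduced $L$ (replace $L$ by $zL$), and the paper's step ``they have to coincide'' silently uses the same normalisation that you spell out when ruling out $u\in\mathbb{C}(z)\setminus\mathbb{C}$ and the residue $j_0>0$. The price is having to fix a branch of $\zeta^{\mu}$ and extend $\sigma$ compatibly to the Picard--Vessiot ring, which you correctly flag and which is routine; the paper's purely coefficient-wise argument avoids this. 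Both proofs are correct.
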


\begin{proof}
We write $L=\sum_{i=0}^mz^iP_i$. Let $f=\sum_{\nu=\mu}^{\infty}A(\nu)z^\nu:=z^{\mu}\sum_{k=0}^{\infty}B_kz^k$ a local solution of $L$ at $z=0$. Then the $A(\nu)$ fulfill the recurrence equation
\[\sum_{i=0}^mP_i(\nu-i)A(\nu-i)=0,\] where we put $A(\nu)=0$ if $\nu<\mu$. Assume now that $A(\nu)=0$ if $\nu\neq kn\mu$ for any $k\in\mathbb{N}$. Writing $m=pn+q$, $p,q\in\mathbb{N}_0, q<n$, we see that
\[\sum_{k=0}^pP_{nk}(\nu-nk)A(\nu-nk)=0.\] Therefore, $f$ is a solution of $\sum_{k=0}^pz^{nk}P_{nk}\in\mathbb{C}[z^n,\vartheta]$. As $L$ is an irreducible right factor of this operator and has the same degree, they have to coincide. To prove the other direction of the statement let $L\in \mathbb{C}[z^n,\vartheta]$ and assume that $L$ has no local solution in $z^{\mu}\mathbb{C}\llbracket z^n\rrbracket$. Without loss of generality we can assume that $\mu+r$ is not an exponent of $L$ for each $r\in\mathbb{N}$. Let $\nu_0$ be minimal such that $\nu_0$ is no integer multiple of $\mu$ and $A(\nu_0)\not=0$. By the recurrence equation stated above, we get $P_0(\nu_0)A(\nu_0)=0$ in contradiction to our assumptions.
\end{proof}

The previous lemma justifies the following construction, which we call the \textit{Delta construction}:

Consider an irreducible, fuchsian differential operator $R\in\mathbb{C}[z,\vartheta]$ and a local solution $f=z^{\mu}\sum_{m=0}^{\infty}A_mz^m$ of $R$ near $z=0$. Then $z^{2\mu}\sum_{m=0}^{\infty}\sum_{k=0}^m(-1)^kA_kA_{m-k} z^m$ is a solution of $R\otimes (-z)^{*}(R)$, which lies in $z^{2\mu}\mathbb{C}\llbracket z^2\rrbracket$. Thus we can apply the map $z^{1/2}$ to this operator and end up with
\[\Delta(R):=\left(z^{1/2}\right)^{*}\left(R\otimes \left(-z^{*}\right)R\right)\in\mathbb{C}[z,\vartheta].\]

\section{CY-type tuples of rank four with symplectic rigidity index two}

We first give a list of CY-type tuples of Jordan-forms $J\in\Sp_4(\mathbb{C})^{r+1}$ with $i_{\Sp_4(\mathbb{C})}(J)=2$ and discuss possible constructions of related M-tuples thereafter. To simplify notation, we denote by $[\alpha]$ the Jordan block matrix $\alpha J(1)\oplus \alpha^{-1} J(1)$ if $\alpha\neq \alpha^{-1}$ and $\alpha J(2)$ else. The notation $[[\alpha]\oplus [\beta]]$ is used similarly. The following table collects for each matrix $A\in\Sp_4(\mathbb{C})$ the codimension of its centralizer $\delta_{\Sp_4(\mathbb{C})}(A)$ and its Jordan form of its image under the exceptional isomorphism $\Sp_4(\mathbb{C})\cong\SO_5(\mathbb{C})$. 

\begin{longtable}{c c c}\label{DeltaSPfour}
$\delta_{\Sp_4(\mathbb{C})}$&\textbf{Jordan form in} $\Sp_4(\mathbb{C})$&\textbf{Jordan form in} $\SO_5(\mathbb{C})$\\[2mm]
\multirow{2}{*}{$4$}&$\pm(J(2)\oplus J(1)^2)$&$J(2)\oplus J(2)\oplus J(1)$\\[2mm]& $-J(1)^2\oplus J(1)^2$& $-J(1)^4\oplus J(1)$\\[5mm]
\multirow{2}{*}{$6$}&$[\alpha]^2$&$\left[\left[\alpha^2\right]\oplus J(1)\right]\oplus J(1)^2$\\[2mm]&$\pm([\alpha]\oplus J(1)^2)_{\alpha\not=1}$&$[\alpha]^2\oplus J(1)$\\[5mm]
\multirow{2}{*}{$8$}&$[[\alpha]\oplus[\beta]]_{\alpha\neq \beta^{\pm 1}}$&$[[\alpha\beta]\oplus [\alpha\beta^{-1}]\oplus J(1)]$\\[2mm]&
$[[\alpha]\oplus [\alpha]]$&$J(3)\oplus \alpha^2\oplus \alpha^{-2}$\\[0.5mm] 
\caption{Jordan forms in $\Sp_4(\mathbb{C})$ and $\SO_5(\mathbb{C})$.}
\end{longtable}

\begin{prop}\label{Classtup}
Consider an M-tuple $T\in \Sp_4(\mathbb{C})^{r+1}$ with $i_{\Sp_4(\mathbb{C})}(T)$ which is induced by an operator CY-type. Then we have $r\in\{2,3\}$ and $\J(T)$ is similar to one of the following tuples
\begin{enumerate}
\item $N_1(\alpha,\beta,\gamma)=(J(4),[\alpha]^2,[[\beta]\oplus[\gamma]])$.
\item $N_2(\alpha,\beta,\gamma)=(J(4),[\alpha]\oplus J(1)^2,[[\beta]\oplus[\gamma]])$.
\item $M_1(\alpha)=\left(J(4), J(2)\oplus J(1)^2, J(2)\oplus J(1)^2, [\alpha]^2\right)$ for $\alpha\neq 1$.
\item $M_2(\alpha)=\left(J(4), J(2)\oplus J(1)^2, J(2)\oplus J(1)^2, [\alpha]\oplus -J(1)^2\right)$ for $\alpha\neq 1$.
\item $M_3(\alpha)=\left(J(4), J(2)\oplus J(1)^2, J(1)^2\oplus -J(1)^2, [\alpha]\oplus J(1)^2\right)$ for $\alpha\neq 1$.
\item $M_4(\alpha)=\left(J(4), J(2)\oplus J(1)^2, J(1)^2\oplus -J(1)^2, [\alpha]^2\right)$.
\item $M_5(\alpha)=\left(J(4), J(1)^2\oplus -J(1)^2, J(1)^2\oplus -J(1)^2, [\alpha]\oplus -J(1)^2\right)$.
\end{enumerate}
\end{prop}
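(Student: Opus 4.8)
The plan is to convert the rigidity hypothesis into a single numerical identity and then enumerate. Since $\dim\Sp_4(\mathbb{C})=10$ and the center $\{\pm\mathbbm{1}_4\}$ is finite, the definition of the index reads $i_{\Sp_4(\mathbb{C})}(T)=\sum_{i=1}^{r+1}\delta_{\Sp_4(\mathbb{C})}(T_i)-20$, so the hypothesis is equivalent to $\sum_{i=1}^{r+1}\delta_{\Sp_4(\mathbb{C})}(T_i)=22$. First I would normalise: since the equivalence ``similar'' permits tensoring by rank-one tuples $K^j_i(-1)$, I may assume that no $T_i$ is central (each $\pm\mathbbm{1}_4$ can be made trivial and dropped at the cost of a sign on another factor), so that every $\delta_{\Sp_4(\mathbb{C})}(T_i)\in\{4,6,8\}$ by the codimension table above. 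The CY-type hypothesis forces one element, say $T_1$, to equal the maximally unipotent $J(4)$, the regular unipotent class, whose centralizer has dimension equal to the rank $2$; hence $\delta_{\Sp_4(\mathbb{C})}(J(4))=8$. The remaining $r$ codimensions then sum to $14$, and the only multisets in $\{4,6,8\}$ with that sum are $\{6,8\}$ and $\{4,4,6\}$. This already gives $r\in\{2,3\}$: in the three-point case the non-$J(4)$ elements have codimensions $6,8$, and in the four-point case they have codimensions $4,4,6$.

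Next I would read off, for each admissible codimension, the candidate Jordan forms from the table, imposing quasi-unipotency so that all eigenvalues are roots of unity. For $\delta=4$ these are $\pm(J(2)\oplus J(1)^2)$ and $J(1)^2\oplus -J(1)^2$; for $\delta=6$ they are $[\alpha]^2$ and $\pm([\alpha]\oplus J(1)^2)$; and for $\delta=8$ they are the regular semisimple $[[\alpha]\oplus[\beta]]$ and the non-semisimple $[[\alpha]\oplus[\alpha]]=\alpha J(2)\oplus\alpha^{-1}J(2)$. This yields a short finite menu of $\J(T)$ for each codimension profile. I then prune in two stages. The first stage applies Scott's inequality (Lemma \ref{ScottFormula}), $\sum_i\gamma(T_i)\geq 8$, after computing $\gamma(J(4))=3$, $\gamma(J(2)\oplus J(1)^2)=1$, $\gamma(J(1)^2\oplus -J(1)^2)=2$, $\gamma([\alpha]\oplus J(1)^2)=2$, and $\gamma=4$ for every element whose eigenvalues avoid $1$; this discards the profiles that are too degenerate to be irreducible.

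The decisive step fixes the eigenvalue and sign data using self-duality together with the relation $\prod_iT_i=\mathbbm{1}_4$. Here I would exploit the exceptional isomorphism $\Sp_4(\mathbb{C})\cong\SO_5(\mathbb{C})$ recorded in the table: the relation descends to $\prod_i\overline{T_i}=\mathbbm{1}_5$ in $\SO_5(\mathbb{C})$, while the two-to-one covering $\Sp_4(\mathbb{C})\to\SO_5(\mathbb{C})$ shows that, once the $\SO_5$-classes are fixed, the lift to $\Sp_4(\mathbb{C})$ is determined up to a sign on each factor. Because tensoring by $K^j_i(-1)$ flips exactly two signs, the constraint $\prod_iT_i=\mathbbm{1}_4$ rather than $-\mathbbm{1}_4$ pins down the placement of the eigenvalue $-1$ up to similarity. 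This is precisely what forces the $-J(1)^2$ summands in $M_2$, $M_3$ and $M_5$ and forbids the corresponding all-``$+$'' variants, so that the four-point profile $\{4,4,6\}$ collapses to exactly $M_1,\dots,M_5$.

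I expect the genuine obstacle to be the last exclusion in the three-point case, namely ruling out the non-semisimple $\delta=8$ element $\alpha J(2)\oplus\alpha^{-1}J(2)$: it survives both the codimension count and Scott's inequality, so its elimination cannot be purely numerical. Here I would argue existence-theoretically, either by showing directly that the prescribed triple of classes admits no irreducible solution of $T_1T_2T_3=\mathbbm{1}_4$ (a Deligne--Simpson type non-existence, conveniently checked in the $\SO_5$-picture, where the class becomes $J(3)\oplus\alpha^2\oplus\alpha^{-2}$), or by applying a middle convolution $\MC_\alpha$, which preserves irreducibility and the index by Proposition \ref{ConstMDSPconv}, to reduce to a tuple of smaller rank whose realisability is transparent. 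Once this non-semisimple case is removed, the three-point profile $\{6,8\}$ yields exactly $N_1$ and $N_2$ according to whether the $\delta=6$ slot is $[\alpha]^2$ or $[\alpha]\oplus J(1)^2$, and the enumeration is complete.
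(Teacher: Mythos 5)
Your numerical skeleton coincides with the paper's: $i_{\Sp_4(\mathbb{C})}(T)=\sum_i\delta_{\Sp_4(\mathbb{C})}(T_i)-20$, the maximally unipotent element contributes $\delta=8$, the remaining codimensions sum to $14$, and the only admissible profiles from $\{4,6,8\}$ are $\{6,8\}$ and $\{4,4,6\}$, whence $r\in\{2,3\}$. The table lookup and the normalisation of signs via tensoring with $K^j_i(-1)$ are also what the paper does. However, there are concrete gaps in the pruning.

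First, Scott's inequality applied only in rank four does not complete the case $J_2=J_3=J(1)^2\oplus -J(1)^2$. The tuple $\left(J(4),\,J(1)^2\oplus -J(1)^2,\,J(1)^2\oplus -J(1)^2,\,[\alpha]^2\right)$ has $\gamma$-sum $3+2+2+4=11\geq 8$, so it survives your first pruning stage, it is quasi-unipotent, it realises the profile $\{4,4,6\}$, and it is not similar to any of $M_1,\dots,M_5$ (no amount of sign-twisting turns $[\alpha]^2$ into $[\alpha]\oplus\pm J(1)^2$, nor $J(1)^2\oplus-J(1)^2$ into $J(2)\oplus J(1)^2$). The paper eliminates it by applying Scott's inequality to the image under $\bigwedge^2$ in $\SO_5(\mathbb{C})$ (after a suitable sign twist): there $\gamma\left(\bigwedge^2 J_1\right)+\gamma\left(-\bigwedge^2 J_2\right)+\gamma\left(-\bigwedge^2 J_3\right)=6$, so irreducibility in rank five forces $\gamma\left(\bigwedge^2 J_4\right)\geq 4$, which rules out $[\alpha]^2$ and leaves only $[\alpha]\oplus\pm J(1)^2$, i.e.\ $M_5$. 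Your proposal contains no tool that sees this; your ``decisive step'' about sign placement cannot, since the offending tuple is not a sign variant of anything on the list.

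Second, the role you assign to the two-to-one covering $\Sp_4(\mathbb{C})\to\SO_5(\mathbb{C})$ is misdirected. All symplectic matrices have determinant one, and the statement is only up to similarity, which already quotients by pairwise sign flips; the exclusions you attribute to the product condition are in fact done by Scott. For instance, the ``all-plus'' variant of $M_2$, namely $J_4=[\alpha]\oplus J(1)^2$ with $J_2=J_3=J(2)\oplus J(1)^2$, dies because $3+1+1+2=7<8$, not because of a lifting obstruction. Finally, the ``genuine obstacle'' you single out in the three-point case is a misreading of the statement: the notation $[[\beta]\oplus[\gamma]]$ explicitly covers the coincident case $\beta=\gamma$ (giving $\beta J(2)\oplus\beta^{-1}J(2)$), so no exclusion of the non-semisimple $\delta=8$ class is required, and the Deligne--Simpson-type non-existence you sketch there is both unnecessary and left unproven.
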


\begin{proof}
Throughout the proof, write $J:=\J(T)$ and $G=\Sp_4(\mathbb{C})$. We assume without loss of generality that $J_1=J(4)$ and put the other matrices into any order. As $i_{G}(J)=2$ and $\delta_{G}(J(4))=8$, we have \[\sum_{i=2}^{r+1}\delta_{G}(J_i)-14=0.\] By Table \ref{DeltaSPfour}, this yields $r\leq 3$. 
The possibilities for $r=2$ give the families $N_1(\alpha,\beta,\gamma)$ and $N_2(\alpha,\beta,\gamma)$.
If $r=3$, we have
$\delta_{G}(J_2)=\delta_{G}(J_3)=4$ and $\delta_{G}(J_4)=6$. In particular, we get that
\[J_2,J_3\in\{\pm\left(J(2)\oplus J(1)^2\right), -J(1)^2\oplus J(1)^2\}\]
and \[J_4\in\left\{[\alpha]^2, \pm\left(\left[\alpha\right]_{\alpha\not=1}\oplus J(1)^2\right)\right\}.\]  
If $J_2=J_3=J(2)\oplus J(1)^2$, the cases for which $\gamma(J_4)=2$ are excluded by Lemma \ref{ScottFormula}. This leaves the possibilities $J_4=[\alpha]^2$ and $J_4=[\alpha]\oplus- J(1)^2$, where $\alpha\neq 1$.
 
If $J_2=J(1)^2\oplus -J(1)^2$ and $J_3=J(2)\oplus J(1)^2$, the cases for which $\gamma(J_4)=1$ are excluded.

If $J_2=J_3=J(1)^2\oplus -J(1)^2$, we get that $\gamma\left(\bigwedge^2J_1\right)+\gamma\left(-\bigwedge^2J_2\right)+\gamma\left(-\bigwedge^2J_3\right)=6$ as we are looking for solutions in $G$. Therefore, the possibilities for which $\gamma\left(\bigwedge^2J_4\right)\leq 3$ are ruled out by Lemma \ref{ScottFormula}. This discussion leaves the possible tuples $M_1(\alpha)-M_5(\alpha)$.
\end{proof}

\subsection{Constructions for tuples with three elements}
We do not have a general construction for tuples in families $N_1(\alpha,\beta,\gamma)$ and $N_2(\alpha,\beta,\gamma)$ stated in Proposition \ref{Classtup}.
Nevertheless, we point out possible constructions for special parameters $\alpha,\beta,\gamma\in\mathbb{C}^{*}$. 

Consider $\alpha,\beta,\gamma\in \mathbb{C}^{*}$ and a tuple of CY-type $T$ such that $\J(T)=N_1(\alpha,\beta,\gamma)$.
Then setting
\[\tilde{T}=\MC_{\alpha\beta}\left(\MC_{(\alpha\beta)^{-1}}\left(T\otimes K^3_2(\alpha)\right)\otimes K^3_2(\alpha^{-1}\beta)\right)\otimes K_2^3(\beta)\]
we have $\J(\tilde{T})=N_1(\beta,\alpha,\gamma)$. As $\tilde{T}$ has an element of maximally unipotent monodromy, it is symplectic by Proposition \ref{ConstMDSPconv} and hence of CY-type.

Starting with an operator which is a symmetric cube of a second order one and translating this operation to the level of differential operators, we hence obtain the following result 

\begin{prop}\label{Dreieins}
Let $a\in\mathbb{Q}\setminus \mathbb{Z}$ and 
\begin{align*}
S_a&=\Sym^3(I_{\frac{1}{4}+a}\star I_{\frac{1}{4}-a})\\&=256\,{\vartheta}^{4}+z \left( -512\,{\vartheta}^{4}-768\,{\vartheta}^{
3}-672\,{\vartheta}^{2}+2560\,{\vartheta}^{2}{a}^{2}+2560\,\vartheta\,
{a}^{2}-288\,\vartheta+768\,{a}^{2}-48 \right)\\& +{z}^{2} \left( 4\,
\vartheta+3+12\,a \right)  \left( 4\,\vartheta+3-12\,a \right) 
 \left( 4\,\vartheta+3+4\,a \right)  \left( 4\,\vartheta+3-4\,a
 \right) 
\end{align*}
Put
\begin{align*}
P_{a,1}&:=\left(\left(S_a\otimes z^{-\frac{1}{4}-a}\right)\star I_{\frac{5}{4}+a}\otimes z^{\frac{1}{4}+a} (1-z)^{a-\frac{1}{4}}\right)\star I_{\frac{3}{4}-a}\\& =16\,{\vartheta}^{4}-z\left(32\vartheta^4+64(1-a)\vartheta^3+\left(63-96a-112a^2\right)\vartheta^2\right)\\&\quad-z\left(\left(31-64a-112a^2\right)\vartheta+6-16a-32a^2\right)\\&\quad+{z}^{2} \left( 4\,\vartheta+5-4\,a
 \right)  \left( \vartheta+1-4\,a \right)  \left( \vartheta+1+2\,a
 \right)  \left( 4\,\vartheta+3-4\,a \right)
\end{align*}
and
\begin{align*}P_{a,2}&:=\left(\left(S_a\otimes z^{-\frac{1}{4}-3a}\right)\star I_{\frac{5}{4}+3a}\otimes z^{3a+\frac{1}{4}}(1-z)^{3a-\frac{1}{4}}\right)\star I_{\frac{3}{4}-3a}\\&=16\,{\vartheta}^{4}+z \left( -32\,{\vartheta}^{4}-64\,{\vartheta}^{3}+
192\,{\vartheta}^{3}a-63\,{\vartheta}^{2}-272\,{\vartheta}^{2}{a}^{2}+
288\,{\vartheta}^{2}a\right)\\&\quad+192z\left(\,\vartheta\,a-272\,\vartheta\,{a}^{2}-31\,
\vartheta-6+48\,a-96\,{a}^{2} \right)\\&\quad +{z}^{2} \left( 4\,\vartheta+3-
12\,a \right)  \left( \vartheta+1-2\,a \right)  \left( \vartheta+1-4\,
a \right)  \left( 4\,\vartheta+5-12\,a \right) 
\end{align*}
Then the tuple of Jordan forms associated to each monodromy tuple of $P_{a,1}\otimes (1-z)^{-a}$ is similar to $N_1(\exp(2\pi ia), i, \exp(6\pi ia))$ and the tuple of Jordan forms associated to each monodromy tuple of $P_{a,2}\otimes (1-z)^{-3a}$ is similar to $N_1(\exp(6\pi ia), i, \exp(2\pi ia))$. 
\end{prop}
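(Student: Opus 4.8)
The plan is to reduce the whole statement to a finite bookkeeping computation on tuples of Jordan forms. Every operation occurring in the definitions of $P_{a,1}$ and $P_{a,2}$ has a counterpart on the associated monodromy tuple, as recalled in Section 2: tensoring with an algebraic function $z^{\mu}(1-z)^{\nu}$ corresponds to tensoring the tuple of Jordan forms with the rank one tuple $K^{3}_{1}(e^{2\pi i\mu})\otimes K^{3}_{2}(e^{2\pi i\nu})$, i.e.\ to multiplying the eigenvalues at $z=0$, $z=1$ and $z=\infty$ by $e^{2\pi i\mu}$, $e^{2\pi i\nu}$ and $e^{-2\pi i(\mu+\nu)}$ respectively while leaving all Jordan block sizes fixed; and the middle Hadamard product $\star I_{b}$ realizes the operation $\MH_{\alpha}$ with $\alpha=e^{2\pi ib}$, which by Definition \ref{JNF} together with the compatibility $\J(\MC_{\alpha}(T))=\MC_{\alpha}(\J(T))$ recalled thereafter can be evaluated purely on Jordan forms. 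Since every intermediate operator is irreducible and fuchsian, these identifications are legitimate, so it suffices to start from $\J(T_{S_a})$ and apply the induced chain of operations.

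First I would determine $\J(T_{S_a})$. As $S_a=\Sym^{3}(H)$ with $H=I_{1/4+a}\star I_{1/4-a}$ of order two, reading off the coefficients of $S_a$ shows its exponents are $\{0,0,0,0\}$ at $z=0$ and $\{\tfrac34\pm 3a,\tfrac34\pm a\}$ at $z=\infty$; the latter are exactly the symmetric cube combinations of $\{\tfrac14\pm a\}$, and the Fuchs relation together with the reflection structure of $H$ fixes the exponents $\{0,\tfrac12\}$ of $H$ at $z=1$. Hence $H$ carries a maximally unipotent $J(2)$ at $z=0$, a reflection with eigenvalues $\{1,-1\}$ at $z=1$, and the semisimple pair $\{ie^{\pm 2\pi ia}\}$ at $z=\infty$. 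Cubing gives
\[
\J(T_{S_a})=\Bigl(J(4),\ J(1)^{2}\oplus -J(1)^{2},\ \bigoplus_{k\in\{3,1,-1,-3\}}\bigl(-ie^{2\pi ika}\bigr)J(1)\Bigr),
\]
a tuple which is not yet symplectic and whose $z=\infty$ block is regular semisimple for $a\in\mathbb{Q}\setminus\mathbb{Z}$.

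Next I would push this tuple through the four operations defining $P_{a,1}$ in order, alternating the twists (which merely relabel eigenvalues as above) with the two Hadamard steps $\MH_{e^{2\pi i(5/4+a)}}$ and $\MH_{e^{2\pi i(3/4-a)}}$, each evaluated by computing $C_{\alpha}$ and the $c_{\alpha,i}$ and reading off the block decomposition from Definition \ref{JNF}. The twists are arranged so that the first of them changes the eigenvalue of the $J(4)$ at $z=0$ to exactly the value made resonant by the following convolution; the net effect of the two convolutions, interleaved with the intervening twist, is to recover a maximally unipotent block at $z=0$ while redistributing the semisimple spectra at $z=1$ and $z=\infty$, and the concluding tensoring by $(1-z)^{-a}$ rescales the eigenvalues at $z=1$ and $z=\infty$ into the inverse-closed patterns $[\,e^{2\pi ia}\,]^{2}$ and $[[i]\oplus[e^{6\pi ia}]]$. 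Carrying this out yields $\J(T_{P_{a,1}\otimes(1-z)^{-a}})=N_1(e^{2\pi ia},i,e^{6\pi ia})$ up to similarity.

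The main obstacle is precisely the case analysis built into Definition \ref{JNF}: at each convolution one must decide which eigenvalues meet the resonant values $1,\alpha^{\pm1}$ that govern whether a Jordan block is raised, lowered or created, and such resonances occur not only at $z=0$ but also at $z=\infty$, so the full spectrum has to be recorded before every step. The hypothesis $a\in\mathbb{Q}\setminus\mathbb{Z}$ is essential here, as it excludes all unintended coincidences and keeps the semisimple parts regular, so that only the designed resonances take effect. In parallel I would verify, using the degree formula for $\star_H$, that the rank returns to $4$ and that irreducibility is preserved at each stage; this both legitimizes the use of $\J(\MC_{\alpha}(T))=\MC_{\alpha}(\J(T))$ and, via Proposition \ref{ConstMDSPconv}, guarantees that the resulting tuple is symplectic and hence of CY-type. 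Finally, the assertion for $P_{a,2}$ follows from the verbatim computation with $a$ replaced by $3a$ in the two inner twists, which interchanges the roles of $e^{2\pi ia}$ and $e^{6\pi ia}$ and produces $N_1(e^{6\pi ia},i,e^{2\pi ia})$; in both cases the identification is understood up to the permutations and rank one twists permitted by similarity.
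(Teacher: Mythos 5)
Your proposal is correct and follows essentially the same route as the paper, which identifies $\J(T_{S_a})$ as similar to $N_1(i,e^{2\pi ia},e^{6\pi ia})$ and obtains $P_{a,1},P_{a,2}$ by translating to differential operators the tuple-level construction $\tilde T=\MC_{\alpha\beta}\bigl(\MC_{(\alpha\beta)^{-1}}\bigl(T\otimes K^3_2(\alpha)\bigr)\otimes K^3_2(\alpha^{-1}\beta)\bigr)\otimes K_2^3(\beta)$ that swaps the first two parameters of $N_1$ --- precisely the chain of twists and middle Hadamard products you track on Jordan forms. The one point to tighten is your claim that $a\notin\mathbb{Z}$ rules out all spectral coincidences: when $2a$, $4a$ or $6a$ is an integer the eigenvalues $\pm i$, $e^{\pm 2\pi ia}$, $e^{\pm 6\pi ia}$ do collide and the case analysis of Definition \ref{JNF} needs separate attention, though the paper's own statement is equally silent on this.
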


In Section \ref{Opthree}, we indicate for which values of $a$ we get CY-type operators. Note, that although the operator $S_a$ we started with is a symmetric cube, the operator $P_{a,1}$ is usually not.

For an M-tuple $T$ of CY-type whose tuple of Jordan forms reads $N_2(\alpha,\beta,-1)$ a direct computation shows
that \[\J(\MC_{-1}(T))^2=J\otimes J\] for $J=(J(2),[\delta], [\delta], [\epsilon])$ with $-\delta^2=\alpha$ and $\epsilon^2=\beta$. 
We try to invert this construction on the level of differential operators. Therefore, we should start with an irreducible operator $R$ of order two whose non-apparent singularities are $S=\{0,-1,1,\infty\}$ and such that the tuple of Jordan forms of a monodromy tuple coincides with $J$. Moreover, we require $R\not\in\mathbb{C}[z^2,\vartheta]$ to get that $\Delta(R)$ is irreducible of degree four.
Then the irreducible right factors of $\Delta(R)\star_H I_{\frac{1}{2}}$ are good candidates for CY-type operators of order four.
So far, we only found two special differential operators for which this construction works, see Section \ref{Opthree}.

\subsection{Constructions for tuples with four elements}

Unless the tuples $N_1(\alpha,\beta,\gamma)$ and $N_2(\alpha,\beta,\gamma)$, we can construct for each $M_i(\alpha)$ a family of inducing differential operators containing examples of CY-type. It turns out that the starting point for those general constructions are special irreducible operators of order two with four singularities. We introduce the following terminology:

\begin{defin}
For a fuchsian differential operator $L\in\mathbb{C}[z,\vartheta]$ of degree two which has rational exponents $e_{1,s}\leq e_{2,s}$ at each singularity $s\in S$, we call \[\lambda_s:=e_{2,s}-e_{1,s}\] the \textit{signature} of the singularity $s$. With respect to an order on $S$, the tuple of signatures of all points in $S$ is denoted by $\sign(L)$ and called the \textbf{signature} of $L$.
\end{defin}

In the sequel, we consider fuchsian differential operators $L$ for which the smallest exponent at $z=s$ is zero at each $s\in \mathbb{C}$. Then, by the classical Fuchs relation \[e_{1,\infty}+e_{2,\infty}=|S|-2-\sum_{s\in S\setminus\{\infty\}}\lambda_s,\] all remaining exponents of $L$ are determined by its $\sign(L)$.
Moreover, as we only want to consider operators for which the exponents at each of their singularities fix the Jordan form of the corresponding local monodromy, we assume that all entries in $\sign(L)$ are strictly smaller than one. A class of such operators are the following ones:

\begin{defin}
For $h:=(t,u,v,w,c,s_1,s_2)\in\left(\mathbb{C}\cap [0,1)\right)^4\times \mathbb{C}\times \mathbb{C}\setminus\{0\}\times \mathbb{C}\setminus\{0,s_1\}$, we call 
\begin{align*}R(h)&:=4\,\vartheta\,s_{{1}}s_{{2}} \left( \vartheta-t \right)-4z(\vartheta^2(s_1+s_2)+\vartheta(s_1(1-v)+s_2(1-u)-t(s_1+s_2))+c)\\&+{z}^{2}
 \left( 2\,\vartheta+2-t-u-v+w \right)  \left( 2\,\vartheta+2-t
-u-v-w \right). 
\end{align*}
the associated \textbf{Heun operator}. 
\end{defin}

The Riemann-scheme of a Heun operator reads
\[\mathcal{R}\left(R(h)\right)=\begin{Bmatrix}
0&s_1&s_2&\infty\\[0.1cm] \hline\\[-0.25cm]\begin{array}{c} 0
\\\noalign{\medskip}t
\\\noalign{\medskip} \end{array}&\begin{array}{c} 0
\\\noalign{\medskip}u
\\\noalign{\medskip} \end{array}&\begin{array}{c} 0
\\\noalign{\medskip}v
\\\noalign{\medskip}\end{array}&\begin{array}{c}1-\frac{1}{2}(t+u+v+w) 
\\\noalign{\medskip}1-\frac{1}{2}(t+u+v-w)
\\\noalign{\medskip}\end{array}
\end{Bmatrix}\] and its signature is given by $\sign(R(h))=(t,u,v,w)$.
In the sequel, we will just study Heun operators $R(h)\in\mathbb{Q}[z,\vartheta]$, meaning that either $s_1,s_2\in\mathbb{Q}$ or $u=v$ and there is an irreducible polynomial $p\in\mathbb{Q}[X]$ of degree two such that $p(s_1)=p(s_2)=0$.

We state constructions for each of the cases $M_1(\alpha)-M_5(\alpha)$ using Heun operators. This leads to three-parameter families of operators. Operators of CY-type inside those families are stated in Section \ref{Opfour}.

Assume in the sequel that $T$ is an M-tuple of CY-type. 
If \[\J(T)=M_1(\alpha)=\left(J(4), J(2)\oplus J(1)^2, J(2)\oplus J(1)^2, [\alpha]^2\right)\] for $\alpha\neq 1$ we find that
\begin{align*}\MC_{\alpha}\left(\MC_{\alpha^{-1}}(M_1(\alpha))\otimes K_1^4(\alpha)\right)\otimes K_4^1(\alpha)=(J(2), J(2), J(2), J(2)).\end{align*} As the tuple of Jordan forms associated to the monodromy tuple of $R(0,0,0,0,s_1,s_2,c)$ is precisely $(J(2), J(2), J(2), J(2))$, translating this construction to the level of differential operators yields

\begin{prop}
For each $a\in\mathbb{Q}\setminus\mathbb{Z}$ the tuple of Jordan forms associated to each monodromy tuple of
\begin{align*}
Q_1(c,s_1,s_2,a)&=R(0,0,0,0,s_1,s_2,c)\star_H I_a\star_H I_{1-a}\\&={\vartheta}^{4}s_{{1}}s_{{2}}-z \left( \vartheta+a \right)  \left( 
\vartheta+1-a \right)  \left( {\vartheta}^{2}s_{{1}}+{\vartheta}^{2}s_
{{2}}+\vartheta\,s_{{1}}+\vartheta\,s_{{2}}+c \right)\\& +{z}^{2} \left( 
\vartheta+2-a \right)  \left( \vartheta+1-a \right)  \left( \vartheta+
1+a \right)  \left( \vartheta+a \right) 
\end{align*} is similar to $M_1(\exp(2\pi ia))$. 
\end{prop}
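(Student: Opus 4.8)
The plan is to transport the operator-level construction into the combinatorics of Definition \ref{JNF} and then to recognise the outcome as the inverse of the tuple identity recorded just before the statement. Throughout write $\alpha:=\exp(2\pi i a)$ and $B:=(J(2),J(2),J(2),J(2))$. Since $\exp(2\pi i(1-a))=\alpha^{-1}$, the two middle Hadamard products $\star_H I_a$ and $\star_H I_{1-a}$ realise, on the Jordan forms of a monodromy tuple, the operations $\MH_{\alpha}$ and $\MH_{\alpha^{-1}}$ of Definition \ref{JNF}; this is the translation established in \cite[Section 4, Section 5]{BoR}, resting on the compatibility $\J(\MC_{\beta}(T))=\MC_{\beta}(\J(T))$ for irreducible M-tuples. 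Granting that every intermediate operator is irreducible and meets the hypotheses required for its middle Hadamard product, any monodromy tuple of $Q_1(c,s_1,s_2,a)$ has Jordan forms $\MH_{\alpha^{-1}}(\MH_{\alpha}(B))$ (these do not depend on the topological choices), and the task reduces to the identity $\MH_{\alpha^{-1}}(\MH_{\alpha}(B))=M_1(\alpha)$ up to the permutation and rank-one twist permitted by similarity.

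First I would treat the base operator. The Riemann scheme displayed for $R(h)$ shows that $R(0,0,0,0,s_1,s_2,c)$ has local exponents $0,0$ at each of $0,s_1,s_2$ and $1,1$ at $\infty$; as the signature is $(0,0,0,0)$ every local monodromy is unipotent. For the parameter values at which $R$ is irreducible, no local monodromy can be trivial, so each is a single Jordan block and $\J(T)=B$; that this is an admissible irreducible configuration is consistent with Lemma \ref{ScottFormula}, since $\sum_i\gamma(J(2))=4=2\cdot 2$.

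Next I would check that both products are defined and determine the ranks. All exponents of $R$ lie in $\{0,1\}$ and $a\notin\mathbb{Z}$, so $R$ has no exponent in $\mathbb{Z}_{<0}$ away from $0$ and none in $1-a+\mathbb{Z}_{<0}$ at $0$, and the hypotheses for $\star_H I_a$ hold. The degree formula gives
\[\deg(R\star_H I_a)=\gamma(T_2)+\gamma(T_3)+\gamma(T_4)-2+\gamma(\alpha T_1)=1+1+1-2+2=3,\]
using $\gamma(\alpha J(2))=2$ because $\alpha\neq 1$; thus the first product yields a rank-three system with Jordan forms $\MH_{\alpha}(B)$. Reading the exponents of this intermediate operator off $\MH_{\alpha}(B)$, I would verify the corresponding hypotheses for $\star_H I_{1-a}$ (the relevant forbidden set at $0$ being $a+\mathbb{Z}_{<0}$, which avoids $\mathbb{Z}$) and obtain $\deg Q_1=4$, so that $Q_1$ really carries a rank-four monodromy tuple.

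Finally I would perform the combinatorial identification. Expanding the definitions,
\[\MH_{\alpha^{-1}}(\MH_{\alpha}(B))=\MC_{\alpha}\!\left(\MC_{\alpha^{-1}}\!\left(B\otimes K_1^4(\alpha)\right)\otimes K_1^4(\alpha^{-1})\right),\]
which is exactly the inverse of the identity
\[\MC_{\alpha}\!\left(\MC_{\alpha^{-1}}(M_1(\alpha))\otimes K_1^4(\alpha)\right)\otimes K_4^1(\alpha)=B\]
recorded above: $\MC_{\beta}$ is inverted by $\MC_{\beta^{-1}}$ (Proposition \ref{ConstMDSPconv}), tensoring by $K_1^4(\alpha)$ is inverted by tensoring with $K_1^4(\alpha^{-1})=K_4^1(\alpha)$, and $K_4^1(\alpha^{-1})=K_1^4(\alpha)$. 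Hence the two composites are mutually inverse and $\MH_{\alpha^{-1}}(\MH_{\alpha}(B))=M_1(\alpha)$, proving the claim. I expect the main obstacle to be precisely this last step: tracking every Jordan block of every eigenvalue through the two middle convolutions via the block rules of Definition \ref{JNF}, and ---more delicately--- confirming that irreducibility survives at the intermediate rank-three stage, since only then does the compatibility $\J\circ\MC=\MC\circ\J$ apply and legitimise passing from operators to the combinatorial calculus. Once irreducibility is in hand, what remains is the routine verification underlying the recorded tuple identity.
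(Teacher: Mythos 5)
Your proposal is correct and follows essentially the same route as the paper: the paper's own justification is precisely the displayed tuple identity $\MC_{\alpha}\bigl(\MC_{\alpha^{-1}}(M_1(\alpha))\otimes K_1^4(\alpha)\bigr)\otimes K_4^1(\alpha)=(J(2),J(2),J(2),J(2))$ together with the observation that this is the Jordan tuple of $R(0,0,0,0,s_1,s_2,c)$, and the proposition is obtained by inverting that construction via the two middle Hadamard products, exactly as you do. Your added checks (irreducibility at the intermediate rank-three stage, the degree formula, and the hypotheses for $\star_H I_a$) fill in details the paper leaves implicit but do not change the argument.
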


If \[\J(T)=M_2(\alpha)=\left(J(4), J(2)\oplus J(1)^2, J(2)\oplus J(1)^2, [\alpha]\oplus -J(1)^2\right)\] for $\alpha\neq 1$, we find that
\[\MC_{-1}(M_2(\alpha))\otimes K^4_1(-1)=\Sym^2((J(2), J(1)\oplus -J(1), J(1)\oplus -J(1), [\beta]))\] for $\beta^2=\alpha$. As the tuple of Jordan forms associated to the monodromy tuple of $R\left(0,\frac{1}{2}, \frac{1}{2}, \lambda, s_1, s_2, c\right)$ is similar to $(J(2), J(1)\oplus -J(1), J(1)\oplus -J(1), [\exp(2\pi i\lambda)])$, we get

\begin{prop}
For each $\lambda\in\mathbb{Q}\setminus\mathbb{Z}$ the tuple of Jordan forms associated to each monodromy tuple of
\begin{align*}
Q_2(c,s_1,s_2,\lambda)&= \Sym^2\left(R\left(0,\frac{1}{2}, \frac{1}{2}, \lambda, s_1, s_2, c\right)\right)\star_H I_{\frac{1}{2}}\\&= 4\,{\vartheta}^{4}s_{{1}}s_{{2}}-z \left( 2\,\vartheta+1 \right) ^{2}
 \left((s_{{1}}+s_{{2}})(\vartheta^2+\vartheta)+4\,c \right)\\& +{z}^{2} \left( 2\,\vartheta+3
 \right)  \left( 2\,\vartheta+1 \right)  \left( \vartheta+1+\lambda
 \right)  \left( \vartheta+1-\lambda \right) 
\end{align*}
is similar to $M_2(\exp(2\pi i\lambda))$. 
\end{prop}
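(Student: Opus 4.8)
The plan is to run the construction that precedes the statement backwards on the level of monodromy and then read off the Jordan forms, using that the operator-level operations $\Sym^2(\cdot)$ and $\star_H I_{1/2}$ realise the tuple-level operations $\Sym^2$ and $\MH_{-1}$. First I would fix $R:=R(0,\tfrac12,\tfrac12,\lambda,s_1,s_2,c)$ and read its local monodromy off the Riemann scheme of a Heun operator. At $z=0$ the two exponents coincide, so for irreducible $R$ the local monodromy is $J(2)$; at $z=s_1,s_2$ the exponents $0,\tfrac12$ give the diagonalisable block $J(1)\oplus -J(1)$; and at $z=\infty$ the exponents $\tfrac12\mp\tfrac\lambda2$ have product of eigenvalues $1$ and eigenvalue ratio $\exp(2\pi i\lambda)$, i.e. a block $[\beta]$ with $\beta^2=\exp(2\pi i\lambda)$. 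Hence a monodromy tuple $H$ of $R$ satisfies $\J(H)\sim(J(2),J(1)\oplus -J(1),J(1)\oplus -J(1),[\beta])$, which is precisely the tuple occurring in the identity $\MC_{-1}(M_2(\alpha))\otimes K^4_1(-1)=\Sym^2(J(2),J(1)\oplus -J(1),J(1)\oplus -J(1),[\beta])$, $\beta^2=\alpha$, established just before the statement.

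Next I would pass to the symmetric square. Since $\Sym^2(R)$ has solution space spanned by the pairwise products of solutions of $R$, its monodromy tuple is $\Sym^2(H)$, and I would compute the Jordan forms blockwise as $\Sym^2(J(2))=J(3)$, $\Sym^2(J(1)\oplus -J(1))=J(1)^2\oplus -J(1)$ and $\Sym^2([\beta])=[\beta^2]\oplus J(1)$. Feeding this into the degree formula for $L\star_H I_a$ with $a=\tfrac12$ (so $\exp(2\pi ia)=-1$) gives $\gamma$-values $1,1,2$ at the three finite-and-infinite non-base points and $\gamma(-J(3))=3$, whence $\deg(\Sym^2(R)\star_H I_{1/2})=1+1+2-3+3=4$; thus $Q_2$ has rank four. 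One also checks that the exponents of $\Sym^2(R)$ avoid $\mathbb{Z}_{<0}$ away from $0$ and avoid $\tfrac12+\mathbb{Z}_{<0}$ at $0$, so that the middle Hadamard product is defined and yields an irreducible degree-four right factor. By the compatibility of $\star_H I_{1/2}$ with $\MH_{-1}$ and of $\MC$ with $\J$, this gives $\J(\mathrm{mon}\,Q_2)=\MH_{-1}(\Sym^2(\J(H)))$ up to similarity.

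Finally I would invert the quoted identity with $\alpha=\beta^2=\exp(2\pi i\lambda)$. Using $\MH_{-1}(X)=\MC_{-1}(X\otimes K^4_1(-1))$ from Definition \ref{JNF}, the triviality of $K^4_1(-1)\otimes K^4_1(-1)=K^4_1(1)$, and that $\MC_{-1}$ is an involution by Proposition \ref{ConstMDSPconv}(2), I obtain
\[\J(\mathrm{mon}\,Q_2)=\MH_{-1}\big(\Sym^2(\J(H))\big)=\MC_{-1}\big(\MC_{-1}(M_2(\exp(2\pi i\lambda)))\big)=M_2(\exp(2\pi i\lambda)).\]
As an independent consistency check I would verify directly on the displayed operator that the exponents at $z=0$ are $\{0,0,0,0\}$, giving $J(4)$, and that the coefficient of $z^2$, namely $(2\vartheta+3)(2\vartheta+1)(\vartheta+1+\lambda)(\vartheta+1-\lambda)$, produces eigenvalues $\{-1,-1,\exp(2\pi i\lambda),\exp(-2\pi i\lambda)\}$ at $z=\infty$, i.e. the block $[\exp(2\pi i\lambda)]\oplus -J(1)^2$ demanded by $M_2(\exp(2\pi i\lambda))$.

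I expect the main work to be twofold. Conceptually, the delicate point is the bookkeeping of eigenvalues through $\Sym^2$ and the two Kummer twists, so that one lands exactly on $\exp(2\pi i\lambda)$ rather than on a square or square root of it; tracking this is what fixes the parameter in $M_2$. Technically, the effort lies in securing irreducibility at each stage, of $\Sym^2(R)$ and of the degree-four right factor of $\mathcal{H}_{1/2}(\Sym^2(R))$, so that the operator-level operations genuinely realise the tuple-level ones and the quoted compatibilities $\J(\MC_\alpha(T))=\MC_\alpha(\J(T))$ and the $\star_H$–$\MH$ correspondence apply; verifying that the explicitly displayed fourth-order operator coincides with this right factor is then a routine but lengthy direct computation.
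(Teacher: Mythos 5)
Your proposal is correct and follows essentially the same route as the paper: the paper's argument is precisely the tuple-level identity $\MC_{-1}(M_2(\alpha))\otimes K^4_1(-1)=\Sym^2\big((J(2),J(1)\oplus -J(1),J(1)\oplus -J(1),[\beta])\big)$, $\beta^2=\alpha$, combined with reading the Heun operator's Jordan tuple off its Riemann scheme and inverting via $\MH_{-1}$. Your additional bookkeeping (the degree-four count from the Hadamard degree formula, the involution property of $\MC_{-1}$, and the consistency check at $z=0$ and $z=\infty$) only makes explicit what the paper leaves implicit.
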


If \[\J(T)=M_3(\alpha)=\left(J(4), J(2)\oplus J(1)^2, J(1)^2\oplus -J(1)^2, [\alpha]\oplus J(1)^2\right)\] for $\alpha\neq 1$, we find
\begin{align*}J'&=\MC_{-1}(M_3(\alpha)\otimes K^4_1(-1))\otimes K_1^4(-1)\\&=(J(3)\oplus -J(1), -J(1)\oplus J(1)^3, J(2)^2, [[-\alpha]\oplus J(1)]\oplus J(1)).
\end{align*}
The latter tuple is similar to
$J''=(J(3)\oplus -J(1), [[-\alpha]\oplus J(1)]\oplus J(1),J(2)^2, -J(1)\oplus J(1)^3)$ for which we find that
\begin{align*}(J'')^2=\left(J(2),[\beta], J(1)^2,[\beta],J(2), J(1)^2\right)\otimes \left(J(2), [\beta], J(2), [\beta], J(1)^2, J(1)^2\right)\end{align*} with  $\beta^2=-\alpha$.
To mimic the inverse construction on the level of differential operators, we start with the Heun operator $R(0,\lambda, \lambda,0,s_1,s_2,c)$.

Consider 
\[P_1=\left(\frac{2zs_1s_2}{z(s_1+s_2)+s_2-s_1}\right)^{*}R(0,\lambda, \lambda,0,s_1,s_2,c)\otimes (z^2-1)^{-\lambda/2}((s_1+s_2)z-(s_1-s_2))^{\lambda-1},\] whose Riemann scheme reads
\[\mathcal{R}(P_1)=\begin{Bmatrix}
0&-1&1&(s_1-s_2)/(s_1+s_2)&\infty\\[0.1cm] \hline\\[-0.25cm]\begin{array}{c} 0
\\\noalign{\medskip}0
\\\noalign{\medskip} \end{array}&\begin{array}{c} -\frac{\lambda}{2}
\\\noalign{\medskip}\frac{\lambda}{2}
\\\noalign{\medskip} \end{array}&\begin{array}{c} -\frac{\lambda}{2}
\\\noalign{\medskip}\frac{\lambda}{2}
\\\noalign{\medskip}\end{array}&\begin{array}{c} 0
\\\noalign{\medskip}0
\\\noalign{\medskip}\end{array}&\begin{array}{c} 1
\\\noalign{\medskip}2
\\\noalign{\medskip}\end{array}
\end{Bmatrix}.\]
The tuple of Jordan forms associated to each monodromy tuple of $P_1$ is similar to $(J(2),[\exp(\pi i\lambda)], [\exp(\pi i\lambda)], J(2), J(1)^2)$. For general values $c\in\mathbb{C}$, a direct computation shows, that the Riemann scheme of $\Delta(P_1)$ reads
\[\mathcal{R}(\Delta(P_1))=\begin{Bmatrix}
0&1&(s-1)^2/(s+1)^2&\tilde{s}&\infty\\[0.1cm] \hline\\[-0.25cm]
\begin{array}{c} 0
\\\noalign{\medskip}0
\\\noalign{\medskip}0
\\\noalign{\medskip}\frac{1}{2}
\\\noalign{\medskip} \end{array}&\begin{array}{c}0
\\\noalign{\medskip}1
\\\noalign{\medskip}-\lambda
\\\noalign{\medskip}\lambda
\\\noalign{\medskip}\end{array}&\begin{array}{c}0
\\\noalign{\medskip}0
\\\noalign{\medskip}1
\\\noalign{\medskip}1
\\\noalign{\medskip}\end{array}
&\begin{array}{c}0
\\\noalign{\medskip}1
\\\noalign{\medskip}2
\\\noalign{\medskip}4
\\\noalign{\medskip}\end{array}
&\begin{array}{c}1
\\\noalign{\medskip}\frac{3}{2}
\\\noalign{\medskip}2
\\\noalign{\medskip}3
\\\noalign{\medskip}\end{array}
\end{Bmatrix}
,\]
where $\tilde{s}\in\mathbb{C}$ is an apparent singularity. Next, we check that $\Delta(P_1)\star_H I_{\frac{1}{2}}=LP_2$, where $L$ is the unique monic operator of degree one whose Riemann-scheme reads
\[\mathcal{R}(L)=\begin{Bmatrix}
0&1&(s-1)^2/(s+1)^2&\tilde{s}&\infty\\[0.1cm] \hline\\[-0.25cm]
\begin{array}{c} -3
\\\noalign{\medskip}\end{array}&\begin{array}{c} -2
\\\noalign{\medskip}\end{array}&\begin{array}{c} -3
\\\noalign{\medskip}\end{array}&\begin{array}{c} \frac{1}{2}
\\\noalign{\medskip}\end{array}&\begin{array}{c} \frac{15}{2}
\\\noalign{\medskip}\end{array}\end{Bmatrix}.\]

The tuple of Jordan forms associated to each monodromy tuple of $P_2$ is similar to $M_3(\exp(2\pi i(\lambda+1/2))$. To decrease the number of terms of the resulting differential operator, we put $Q_3(s_1,s_2,\lambda,c):=\left(\frac{z}{z-1}\right)^{*}(P_2)\otimes ((1-2x)(4s_1s_2x+(s_1-s_2)^2))^{-1/2}$. Carrying out all computations explicitly yields

\begin{prop}
For each $\lambda\in\mathbb{Q}\setminus\mathbb{Z}$ the tuple of Jordan forms associated to each monodromy tuple of 
\begin{align*}
Q_3(s_1,s_2,\lambda,c)&={\vartheta}^{4} \left( s_{{1}}-s_{{2}} \right) ^{4}-z \left( s_{{1}}-s
_{{2}} \right) ^{2} ((s^2_1-10s_1s_2+s^2_2)(\vartheta^4+2\vartheta^3))\\&-z \left( s_{{1}}-s
_{{2}} \right) ^{2}\vartheta^2\left((s_1^2+s_2^2)(1+\lambda-\lambda^2)+2s_1s_2(\lambda^2+\lambda-12)+2c(s_1+s_2)\right)
\\&-z \left( s_{{1}}-s
_{{2}} \right) ^{2}\vartheta\left((s_1^2+s_2^2)\lambda(1-\lambda)+2s_1s_2(\lambda^2+\lambda-7)+2c(s_1+s_2)\right)
\\&-z \left( s_{{1}}-s
_{{2}} \right) ^{2}(s_1s_2(\lambda^2-3)+c\lambda(s_1+s_2)+c^2)
\\&-4z^2s_1s_2(\vartheta+1)^2\left(2(s^2_1-4s_1s_2+s^2_2)(\vartheta^2+2\vartheta)\right)\\&-4z^2s_1s_2(\vartheta+1)^2\left((s_1^2+s_2^2)(3+\lambda-2\lambda^2)+s_1s_2(4\lambda^2+2\lambda-13)+2c(s_1+s_2)\right)
\\&-4z^3s_1^2s_2^2(\vartheta+1)(\vartheta+2)(2\vartheta+3+2\lambda)(2\vartheta+3-2\lambda)
\end{align*}
is similar to $M_3(\exp(2\pi i(\lambda+1/2)))$. 
\end{prop}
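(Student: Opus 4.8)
The plan is to follow the chain of operations defining $Q_3(s_1,s_2,\lambda,c)$ and to track the similarity class of the associated tuple of Jordan forms at each stage, using throughout the compatibility $\J(\MC_\alpha(T))=\MC_\alpha(\J(T))$ together with the combinatorial rules of Definition \ref{JNF}. Since the notion of similarity from Section 2 already absorbs both permutations of the tuple and tensor products with rank-one M-tuples, the two operations producing $Q_3$ from $P_2$, namely the M\"obius pullback $(z/(z-1))^*$ and the twist by the algebraic function $((1-2x)(4s_1s_2x+(s_1-s_2)^2))^{-1/2}$, leave the similarity class unchanged: the pullback merely permutes the singular points and hence the entries of the tuple, while the twist tensors with a rank-one M-tuple. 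It therefore suffices to show that $P_2$ has Jordan tuple similar to $M_3(\exp(2\pi i(\lambda+1/2)))$ and then to verify the displayed coefficient expansion for $Q_3$.

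To determine the Jordan tuple of $P_2$, I would first read off the local monodromies of $P_1$. Since $P_1$ arises from the irreducible Heun operator $R(0,\lambda,\lambda,0,s_1,s_2,c)$ by the M\"obius pullback displayed above followed by a rank-one twist, it stays irreducible and fuchsian, and its local Jordan forms follow from its Riemann scheme once one records which coincident exponents are genuinely logarithmic: the equal pairs at $z=0$ and at the finite ramification point come from honest singularities and give $J(2)$, the exponents $\pm\lambda/2$ at $z=\pm1$ give $[\exp(\pi i\lambda)]$, and the integer pair at $z=\infty$ is apparent and contributes $J(1)^2$. This produces the tuple $(J(2),[\exp(\pi i\lambda)],[\exp(\pi i\lambda)],J(2),J(1)^2)$.

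Next I would pass through the Delta construction and the middle Hadamard product. By Lemma \ref{root} the operator $\Delta(P_1)=(z^{1/2})^*(P_1\otimes(-z)^*P_1)$ is a well-defined fuchsian operator of degree four, and a direct computation yields the Riemann scheme displayed before the statement, including the apparent singularity $\tilde s$; reading eigenvalues off that scheme gives the Jordan tuple of $\Delta(P_1)$. The middle Hadamard product $\star_H I_{1/2}$ acts on Jordan forms as $\MH_{-1}=\MC_{-1}(\,\cdot\otimes K_1^{r+1}(-1))$, since $\exp(2\pi i\cdot\tfrac12)=-1$, and I would transform the tuple of $\Delta(P_1)$ by the explicit formulas of Definition \ref{JNF}, checking that the outcome is $M_3(\exp(2\pi i(\lambda+1/2)))$; here the relation $\beta^2=-\alpha$ with $\beta=\exp(\pi i\lambda)$ accounts for the shifted eigenvalue $\alpha=\exp(2\pi i(\lambda+1/2))$. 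The middle Hadamard product singles out the degree-four operator $P_2$ appearing in the factorization $\mathcal H_{1/2}(\Delta(P_1))=LP_2$, the degree-one factor $L$ carrying off the spurious exponents recorded in $\mathcal R(L)$.

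The remaining task is the verification of the explicit expression for $Q_3$, which I expect to be the main obstacle. I would carry out the M\"obius pullbacks, the tensor product inside the Delta construction, the division by $L$, and the final square-root twist symbolically, using the Riemann scheme at each intermediate stage as a consistency check and in particular confirming that $\tilde s$ is genuinely apparent so that it contributes no nontrivial local monodromy. The difficulty is entirely one of bookkeeping: keeping the factorization $\mathcal H_{1/2}(\Delta(P_1))=LP_2$ exact and propagating the coefficients correctly through the substitution $z\mapsto z/(z-1)$ and the algebraic twist. Once the expanded coefficients agree with the stated operator and the similarity class has been shown invariant under the two final operations, the Proposition follows.
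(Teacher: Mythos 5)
Your proposal is correct and takes essentially the same route as the paper: the authors likewise read off the Jordan tuple of $P_1$ from its Riemann scheme, push it through the Delta construction and the middle Hadamard product with $I_{1/2}$ (splitting off the degree-one factor $L$), observe that the final M\"obius pullback and rank-one twist only permute entries and tensor with a rank-one tuple, and conclude by carrying out the symbolic computation explicitly. The only point to keep honest in your write-up is the one you already flag, namely that the Riemann schemes must be supplemented by a check of which coincident or integer-spaced exponents are genuinely logarithmic and that $\tilde s$ is apparent.
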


If \[\J(T)=M_4(\alpha)=\left(J(4), J(2)\oplus J(1)^2, J(1)^2\oplus -J(1)^2, [\alpha]^2\right)\] we have
\begin{align*}
&\MC_{-1}\left(\MC_{\alpha}\left(\MC_{\alpha^{-1}}(M_4(\alpha))\otimes K^4_1(\alpha) \right)\otimes K_4^1(\alpha)\otimes K^4_3(-1)\right)\otimes K^4_1(-1)\\&=(J(1)\oplus -J(1)\oplus \alpha J(1)\oplus \alpha^{-1} J(1), J(2)^2, -J(1)\oplus J(1)^3, -J(1)^2\oplus J(1)^2).
\end{align*}
This tuple is similar to $\tilde{J}=(J(1)\oplus -J(1)\oplus \alpha J(1)\oplus \alpha^{-1} J(1), J(2)^2, -J(1)^2\oplus J(1)^2), -J(1)\oplus J(1)^3)$ for which we find that
\begin{align*}
\tilde{J}^2=\left([\beta],J(2), [i], J(1)^2, [i], J(1)^2\right)\otimes\left([\beta], J(1)^2, [i], J(2)\otimes J(1)^2, [i], J(1)^2\right),
\end{align*}
where $\beta^2=\alpha$.
To mimic the inverse construction on the level of differential operators, we start with the Heun operator $R(0,\frac{1}{2}, \frac{1}{2},\lambda,s_1,s_2,c)$.
Consider the operator 
\[P_1:=\left(\frac{(s_1+s_2)z+s_1-s_2}{2z}\right)^{*}R(0,\frac{1}{2}, \frac{1}{2},\lambda,s_1,s_2,c)\otimes z^{-1/2}(z^2-1)^{1/4}\]
whose Riemann scheme reads
\[\mathcal{R}(P_1)=\begin{Bmatrix}
0&-1&1&(s_1-s_2)/(s_1+s_2)&\infty\\[0.1cm] \hline\\[-0.25cm]\begin{array}{c} -\frac{\lambda}{2}
\\\noalign{\medskip}\frac{\lambda}{2}
\\\noalign{\medskip} \end{array}&\begin{array}{c} \frac{1}{4}
\\\noalign{\medskip}\frac{3}{4}
\\\noalign{\medskip} \end{array}&\begin{array}{c} \frac{1}{4}
\\\noalign{\medskip}\frac{3}{4}
\\\noalign{\medskip}\end{array}&\begin{array}{c} 0
\\\noalign{\medskip}0
\\\noalign{\medskip}\end{array}&\begin{array}{c} 0
\\\noalign{\medskip}1
\\\noalign{\medskip}\end{array}
\end{Bmatrix}.\]

Moreover, the tuple of Jordan forms associated to each monodromy tuple of $P_1$ is similar to $\left([\exp(\pi i\lambda)], [i], [i], J(2), J(1)^2\right)$.
For general values $c\in\mathbb{C}$, a direct computation shows that the operator $\Delta(P_1)$ has Riemann scheme
\[\mathcal{R}(\Delta(P_1))=\begin{Bmatrix}
0&1&(s-1)^2/(s+1)^2&\tilde{s}&\infty\\[0.1cm] \hline\\[-0.25cm]
\begin{array}{c} 0
\\\noalign{\medskip}\frac{1}{2}
\\\noalign{\medskip}-\frac{\lambda}{2}
\\\noalign{\medskip}\frac{\lambda}{2}
\\\noalign{\medskip} \end{array}&\begin{array}{c}\frac{1}{2}
\\\noalign{\medskip}1
\\\noalign{\medskip}\frac{3}{2}
\\\noalign{\medskip}2
\\\noalign{\medskip}\end{array}&\begin{array}{c}0
\\\noalign{\medskip}0
\\\noalign{\medskip}1
\\\noalign{\medskip}1
\\\noalign{\medskip}\end{array}
&\begin{array}{c}0
\\\noalign{\medskip}1
\\\noalign{\medskip}2
\\\noalign{\medskip}4
\\\noalign{\medskip}\end{array}
&\begin{array}{c}0
\\\noalign{\medskip}\frac{1}{2}
\\\noalign{\medskip}1
\\\noalign{\medskip}2
\\\noalign{\medskip}\end{array}
\end{Bmatrix}
,\] where $\tilde{s}\in\mathbb{C}$ is an apparent singularity.
Next, we check that $\left(\frac{z}{z-1}\right)^{*}\Delta(P_1)\star_H I_{\frac{1}{2}}=LP_2$, where $L$ is the unique monic operator of degree one whose Riemann-scheme reads
\[\mathcal{R}(L)=\begin{Bmatrix}
0&1&(s_1-s_2)^2/4s_1s_2&\tilde{s}&\infty\\[0.1cm] \hline\\[-0.25cm]
\begin{array}{c} -3
\\\noalign{\medskip}\end{array}&\begin{array}{c} -1
\\\noalign{\medskip}\end{array}&\begin{array}{c} -3
\\\noalign{\medskip}\end{array}&\begin{array}{c} \frac{1}{2}
\\\noalign{\medskip}\end{array}&\begin{array}{c} \frac{13}{2}
\\\noalign{\medskip}\end{array}\end{Bmatrix}.\]
The tuple of Jordan forms associated to each monodromy tuple of
\[Q_4(s_1,s_2,c,\lambda)=\left(P_2\otimes \left(4sz+(s-1)^2\right)^{-1/2}\right)\star_H I_{1+\frac{\lambda}{2}}\star_H I_{1-\frac{\lambda}{2}}\] is similar to $M_4(\exp(\pi i\lambda))$.
Carrying out all computations explicitly yields

\begin{prop}
 For each $\lambda\in\mathbb{Q}\setminus\mathbb{Z}$ the tuple of Jordan forms associated to the monodromy tuple of 
\begin{align*}
Q_4(s_1,s_2,c,\lambda)&=4\vartheta^4(s_1 -s_2)^4-z(s_1-s_2)^2\left(4(s_1^2-10s_1s_2+s_2^2)(\vartheta^4+2\vartheta^3)\right)\\& -z(s_1-s_2)^2\vartheta^2\left(5(s_1^2+s_2^2)+2s_1s_2(4\lambda^2-45)+8c(s_1+s_2)\right)\\&-z(s_1-s_2)^2\vartheta\left(s_1^2+s_2^2+2s_1s_2(4\lambda^2-25)+8c(s_1+s_2)\right)\\&-z(s_1-s_2)^2\left(s_1s_2(3\lambda^2-11)+2c(s_1+s_2)+4c^2
\right)\\&-4z^2s_1s_2(2\vartheta+2+\lambda)(2\vartheta+2-\lambda)(2(s_1^2-4s_1s_2+s_2^2)(\vartheta^2+2\vartheta))\\&-4z^2s_1s_2(2\vartheta+2+\lambda)(2\vartheta+2-\lambda)
(3(s_1^2+s_2^2)+s_1s_2(\lambda^2-11)+2c(s_1+s_2))
\\&-4z^3s_1^2s_2^2(2\vartheta+2+\lambda)(2\vartheta+2-\lambda)(2\vartheta+4+\lambda)(2\vartheta+4-\lambda)
\end{align*}
is similar to $M_4(\exp(\pi i\lambda))$.
\end{prop}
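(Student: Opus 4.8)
The statement is the terminal step of the construction developed in the paragraphs immediately preceding it, so the plan is to verify that each operation performed on the differential operator realises the corresponding operation on the associated M-tuple, and then to carry the chain out explicitly. The backbone is the dictionary between the two levels: a rational pullback $\varphi^{*}$ relocates the singular points and preserves the local Jordan types, a tensor product with an algebraic function only shifts local exponents and hence alters the tuple of Jordan forms within its similarity class (multiplication of local monodromies by scalars), the Delta construction realises the square $J\mapsto J^{2}$ through the degree-two descent $\left(z^{1/2}\right)^{*}$, and the middle Hadamard product $\star_{H}I_{a}$ realises the middle convolution $\MC_{\exp(2\pi i a)}$ via the compatibility $\J(\MC_{\alpha}(T))=\MC_{\alpha}(\J(T))$. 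In particular $\star_{H}I_{\frac12}$ corresponds to $\MC_{-1}$, and the pair $\star_{H}I_{1\pm\lambda/2}$ to two further middle convolutions; combined with the factorisation $\tilde J^{2}=(\dots)\otimes(\dots)$ already displayed, these exactly invert the chain
\[
\MC_{-1}\!\left(\MC_{\alpha}\!\left(\MC_{\alpha^{-1}}(M_4(\alpha))\otimes K^4_1(\alpha)\right)\otimes K_4^1(\alpha)\otimes K^4_3(-1)\right)\otimes K^4_1(-1),
\]
so that running the operator-level construction and tracking Jordan forms returns $M_4(\exp(\pi i\lambda))$.

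First I would record the monodromy of the Heun operator $R\left(0,\frac12,\frac12,\lambda,s_1,s_2,c\right)$ from its Riemann scheme: the signature $\left(0,\frac12,\frac12,\lambda\right)$ forces the local types $J(2)$ at $0$ (irreducibility excluding the split form), $J(1)\oplus -J(1)$ at $s_1$ and $s_2$, and eigenvalues $-\exp(\pm\pi i\lambda)$ at $\infty$. Applying the M\"obius pullback by $\frac{(s_1+s_2)z+s_1-s_2}{2z}$ together with the twist $z^{-1/2}(z^2-1)^{1/4}$ relocates the singularities to $0,-1,1,(s_1-s_2)/(s_1+s_2),\infty$ and shifts exponents, yielding the stated Riemann scheme of $P_1$ and the Jordan forms $\left([\exp(\pi i\lambda)],[i],[i],J(2),J(1)^2\right)$. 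At this stage I would invoke Lemma \ref{root} to confirm $P_1\notin\mathbb{C}[z^2,\vartheta]$, which is what guarantees that the subsequent Delta construction produces an irreducible operator of degree exactly four.

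Next I would compute the Riemann scheme of $\Delta(P_1)$, confirm the displayed exponents and the single extra point $\tilde s$, and verify that $\tilde s$ is apparent by checking that the indicial recursion carries no logarithmic term. Then the pullback by $\frac{z}{z-1}$ and the product $\star_{H}I_{\frac12}$ are applied; the operator $\mathcal{H}_{1/2}$ acquires a degree-one left factor $L$, and $P_2$ is identified as its irreducible right factor, whose degree is pinned down by the degree formula for $\star_{H}$. Finally, tensoring $P_2$ with $\left(4sz+(s-1)^2\right)^{-1/2}$ and applying $\star_{H}I_{1+\lambda/2}$ and $\star_{H}I_{1-\lambda/2}$ produces $Q_4$; reading off its Riemann scheme and using irreducibility to upgrade the four integer exponents at $0$ to a single block $J(4)$ delivers the Jordan data $M_4(\exp(\pi i\lambda))$. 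That the tuple is moreover symplectic, as an honest CY-type tuple should be, follows from Proposition \ref{ConstMDSPconv}.

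The laborious but routine part is the production of the explicit coefficients of $Q_4$: each Delta and Hadamard pass first roughly squares and then reduces the operator, so the intermediate expressions are bulky and the final simplification is best entrusted to a computer algebra system. The step that requires genuine care rather than bookkeeping is the factorisation $\left(\frac{z}{z-1}\right)^{*}\Delta(P_1)\star_{H}I_{\frac12}=LP_2$: one must show that a degree-one factor really splits off, equivalently that the middle Hadamard product drops the order by exactly one, and that the apparent point $\tilde s$ does not obstruct the irreducibility of $P_2$. Ensuring, at every pullback and twist, that the eigenvalue shifts move the tuple only within its similarity class --- so that the final answer is insensitive to the intermediate choices of branch and of the auxiliary parameter $c$ --- is the other place where the argument must be handled carefully.
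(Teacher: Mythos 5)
Your proposal follows essentially the same route as the paper: read off the local data of the Heun operator $R\left(0,\tfrac12,\tfrac12,\lambda,s_1,s_2,c\right)$, form $P_1$ by the M\"obius pullback and algebraic twist, apply the Delta construction (realising the square of the tuple of Jordan forms), split off the degree-one left factor after $\star_H I_{\frac12}$, and finish with the twist and the two Hadamard products $\star_H I_{1\pm\lambda/2}$, with the Jordan data tracked throughout via the compatibility $\J(\MC_\alpha(T))=\MC_\alpha(\J(T))$ and the displayed factorisation of $\tilde J^2$ --- exactly the chain the paper inverts. One small caveat: your side remark that irreducibility alone ``upgrades'' the four equal exponents at $z=0$ to a single block $J(4)$ is not valid (equal integer exponents only force unipotence); the correct justification, which you also give and which the paper uses, is the bookkeeping of Jordan forms through the middle convolutions of Definition \ref{JNF}.
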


Finally, if \[\J(T)=M_5(\alpha)=\left(J(4), J(1)^2\oplus -J(1)^2, J(1)^2\oplus -J(1)^2, [\alpha]\oplus -J(1)^2\right)\] we find that
\begin{align*}J'&=\MC_{-1}\left(M_5(\alpha)\right)\otimes K^4_1(-1)\\&=(J(3)\oplus -J(1)^2, J(2)^2\oplus J(1), J(2)^2\oplus J(1),[\alpha\oplus J(1)]\oplus J(1)^2).\end{align*}
By Proposition \ref{ConstMDSPconv}, $\MC_{-1}(T)$ lies in $\SO_5(\mathbb{C})$ and the exceptional isomorphism between $\SO_5(\mathbb{C})$ and $\Sp_4(\mathbb{C})$ allows us to write
$J'=\bigwedge^2(J'')$ for 
\[J''=\left(iJ(2)\oplus -iJ(2), J(2)\oplus J(1)^2, J(2)\oplus J(1)^2, \beta J(1)^2\oplus \beta^{-1}J(1)^2\right)\] with $\beta^2=\alpha$. Moreover, we find that
\begin{align*}
&\MC_{-i\beta}\left(\MC_{i\beta^{-1}}\left(J''\otimes K^4_1(i)\right)\otimes K_1^4(-i\beta)\right)\otimes K_4^1(-i\beta)\\&=(-J(2), -J(1)\oplus J(1), -J(1)\oplus J(1), J(2)). 
\end{align*}
To mimic the inverse construction on the level of differential operators, we start with $R\left(0,\frac{1}{2},\frac{1}{2}, 0, s_1, s_2, c\right)$. One checks directly that
\begin{align*}P_1&=\left(\left(R\left(0,\frac{1}{2}, \frac{1}{2}, 0,c,s\right)\otimes z^{-\frac{1}{2}}\right)\star_H I_{\frac{5}{4}-\left(\frac{1}{4}+\frac{a}{2}\right)}\star_H I_{\frac{1}{4}+\left(\frac{1}{4}+\frac{a}{2}\right)}\right)\otimes z^{\frac{3}{4}}\\& =s_{{1}}s_{{2}} \left( 4\,\vartheta-1 \right) ^{2} \left( 4\,\vartheta-
3 \right) ^{2}\\&-z \left( 4\,\vartheta-1+2\,a \right)  \left( 4\,
\vartheta+1-2\,a \right)  \left( 16\,{\vartheta}^{2}s_{{1}}+16\,{
\vartheta}^{2}s_{{2}}-s_{{1}}-s_{{2}}+16\,c \right)\\& +{z}^{2} \left( 2
\,a+3+4\,\vartheta \right)  \left( -2\,a+5+4\,\vartheta \right) 
 \left( 4\,\vartheta-1+2\,a \right)  \left( 4\,\vartheta+1-2\,a
 \right) 
\end{align*}
is self-dual and hence that the tuple of Jordan forms associated to the monodromy tuple of $\bigwedge^2(P_1)$ coincides with $J'$. The desired operator then reads $Q_5(s_1,s_2,c,\lambda)=\bigwedge^2(P_1)\star_H I_{\frac{3}{2}}$. Carrying out the computations explicitly yields

\begin{prop}
For each $a\in\mathbb{Q}\setminus\mathbb{Z}$ the tuple of Jordan forms associated to each monodromy tuple of 

\begin{align*}
Q_5(s_1,s_2,c,\lambda)&=16s_1^2s_2^2\vartheta^4-4s_1s_2z(8(s_1+s_2)(\vartheta^4+2\vartheta^3))\\&-4s_1s_2z\vartheta^2\left(2(s_1+s_2)(a-a^2+9)+8c\right)\\&-4s_1s_2z\vartheta\left(2(s_1+s_2)(a-a^2+5)+8c
\right)\\&-4s_1s_2z\left((s_1+s_2)(a-a^2+2)+4c(a^2-a+1)
\right)\\&+z^2\left(16(s_1^2+4s_1s_2+s_2^2)(\vartheta^4+4\vartheta^3)\right)\\&+z^2\vartheta^2\left(4(s_1^2+s_2^2)(2a-2a^2+23)+32s_1s_2(a-a^2+15)+32c(s_1+s_2)\right)\\&+z^2\vartheta\left(8(s_1^2+s_2^2)(2a-2a^2+7)+64s_1s_2(a-a^2+7)+64c(s_1+s_2)\right)\\&+z^2\left((s_1^2+s_2^2)(a^4-2a^3-7a^2+8a+12)+2s_1s_2(84+20a-21a^2-a^4+2a^3)\right)\\&+z^2\left(8c(s_1+s_2)(a^2-a+4)+16c^2\right)\\&-2z^3(2\vartheta+3)^2\left(4(s_1+s_2)(\vartheta^2+3\vartheta)\right)\\&-2z^3(2\vartheta+3)^2\left((s_1+s_2)(3a-3a^2+11)+4c
\right)\\&+z^4(2\vartheta+3)(2\vartheta+5)(2\vartheta+3+2a)(2\vartheta+5-2a)
\end{align*}
is similar to $M_5(\exp(\pi ia))$.
\end{prop}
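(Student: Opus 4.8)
The plan is to run the Jordan-form reduction displayed above in reverse and lift it, step by step, to the level of differential operators by means of the dictionary recalled in Section 1. Each operator-level operation has a prescribed effect on the associated monodromy tuple: tensoring with a factor $z^\mu(1-z)^\nu\cdots$ realizes a tensor product with the corresponding rank-one Kummer tuple, the middle Hadamard product $L\star_H I_b$ realizes $\MH_{\exp(2\pi i b)}$ (hence a middle convolution after a Kummer twist, by Definition \ref{JNF}), algebraic pullbacks act by the induced substitution on generators of $\pi_1$, and the exterior square of operators corresponds to $\bigwedge^2$ on tuples; moreover $\J(\MC_\alpha(T))=\MC_\alpha(\J(T))$ by \cite[Chapter 6]{katz96}. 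It therefore suffices to confirm that each step reproduces the corresponding Jordan-level step and that the final tuple equals $M_5(\exp(\pi i a))$, the explicit polynomial forms of $P_1$ and $Q_5$ being then obtained by performing these operations symbolically.

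First I would start from the Heun operator $R\left(0,\frac12,\frac12,0,s_1,s_2,c\right)$ and read off from its Riemann scheme that its signature is $\left(0,\frac12,\frac12,0\right)$, so that its monodromy Jordan tuple, taken at $0,s_1,s_2,\infty$, is $\left(J(2),\,J(1)\oplus -J(1),\,J(1)\oplus -J(1),\,-J(2)\right)$. Up to the permutation and rank-one twists allowed by similarity this is the bottom tuple $(-J(2),-J(1)\oplus J(1),-J(1)\oplus J(1),J(2))$ of the displayed reduction. Applying to $R$ the operations defining $P_1$ --- namely $\otimes\, z^{-1/2}$, then $\star_H I_{\frac54-\left(\frac14+\frac a2\right)}$ and $\star_H I_{\frac14+\left(\frac14+\frac a2\right)}$, and finally $\otimes\, z^{3/4}$ --- and translating each to the monodromy level through the above dictionary, I would verify that the Jordan tuple of $P_1$ is similar to
\[J''=\left(iJ(2)\oplus -iJ(2),\ J(2)\oplus J(1)^2,\ J(2)\oplus J(1)^2,\ \beta J(1)^2\oplus \beta^{-1}J(1)^2\right),\]
with $\beta^2=\exp(\pi i a)$; the degree formula for $\star_H I_b$ confirms en route that $\deg P_1=4$.

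Next I would check that $P_1$ is self-dual, which places its monodromy group inside $\Sp_4(\mathbb{C})$ and makes the exterior square $\bigwedge^2(P_1)$ a degree-five $\SO_5(\mathbb{C})$ operator under the exceptional isomorphism $\Sp_4(\mathbb{C})\cong\SO_5(\mathbb{C})$; its monodromy Jordan tuple is then $\bigwedge^2(J'')=J'$, matching the displayed computation. Finally, since $\exp\!\left(2\pi i\cdot\tfrac32\right)=-1$, the Hadamard product $Q_5=\bigwedge^2(P_1)\star_H I_{\frac32}$ realizes $\MH_{-1}=\MC_{-1}(\,\cdot\otimes K^{r+1}_1(-1))$, which inverts the relation $J'=\MC_{-1}(M_5(\alpha))\otimes K^4_1(-1)$; by Proposition \ref{ConstMDSPconv} this $\MC_{-1}$ turns the orthogonal tuple back into a symplectic one, and tracking the blocks through it recovers $M_5(\exp(\pi i a))$, as claimed.

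The main obstacle is twofold. Conceptually, the delicate point is the self-duality of $P_1$: it is what legitimizes passing to the exterior square and keeps the entire chain inside the $\Sp_4(\mathbb{C})$--$\SO_5(\mathbb{C})$ picture, and it has to be verified directly from the explicit coefficients rather than read off formally. Technically, the bulk of the work is the faithful symbolic realization of the operators $\mathcal{H}_b$, the extraction of the correct irreducible right factors, and the bookkeeping of the twists and pullbacks that produce the lengthy explicit form of $Q_5$; keeping track of the maximally unipotent block $J(4)$ through the exterior square and the concluding $\MC_{-1}$ is where an error would be easiest to make.
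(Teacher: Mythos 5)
Your proposal is correct and follows essentially the same route as the paper: starting from the Heun operator $R\left(0,\tfrac12,\tfrac12,0,s_1,s_2,c\right)$, building $P_1$ by the indicated twists and middle Hadamard products so that its Jordan tuple is $J''$, verifying self-duality so that $\bigwedge^2(P_1)$ realizes $J'$ under the $\Sp_4$--$\SO_5$ isomorphism, and finally applying $\star_H I_{3/2}$ (i.e.\ $\MH_{-1}$) to invert $J'=\MC_{-1}(M_5(\alpha))\otimes K^4_1(-1)$ and recover $M_5(\exp(\pi i a))$. You also correctly single out the self-duality of $P_1$ and the symbolic bookkeeping as the genuine work, which is exactly where the paper's "one checks directly" and "carrying out the computations explicitly" lie.
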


As a result of the constructions stated in this section, we get

\begin{thm}\label{resumee}
For each tuple of Jordan forms $M_i(\alpha)$ stated in Proposition \ref{Classtup}, we find a three-parameter family of irreducible fuchsian differential operators whose tuples of Jordan forms associated to their monodromy tuples are similar to $M_i(\alpha)$. Moreover, each of the operators $L$ we found is self-dual in the sense that there is an $0\neq \alpha\in\mathbb{Q}(z)$ such that $L\alpha=\alpha L^{\vee}$ holds, where $L^{\vee}$ denotes the adjoint of $L$, and has only zero as exponent at $z=0$.
\end{thm}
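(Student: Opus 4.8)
The plan is to assemble the theorem from the five Propositions proved above, one for each family $Q_i$, and then to promote the information they give (similarity of $\J(T)$ to $M_i(\alpha)$) to the three extra assertions: irreducibility, the exponent normalisation at $z=0$, and self-duality. First I would note that each $Q_i$ is genuinely a three-parameter family: fixing the discrete parameter $a$ (resp. $\lambda$), which only pins down the eigenvalue $\alpha=\exp(2\pi i a)$ (resp. $\exp(\pi i\lambda)$), leaves $s_1,s_2,c$ free, and the displayed coefficients depend non-trivially on all three. Irreducibility I would obtain by following each construction: every $Q_i$ is built from an irreducible Heun operator $R(h)$ by pullbacks $\varphi^{*}$, twists by rank-one algebraic functions, symmetric or exterior powers, the $\Delta$-construction, and middle (Hadamard) convolutions; the convolution steps preserve irreducibility by Proposition \ref{ConstMDSPconv}(1), and the remaining operations preserve it for the parameter ranges in question, so each $Q_i$ is irreducible whenever the underlying Heun operator is. The similarity of $\J(T)$ to $M_i(\alpha)$ is then exactly the content of the individual Propositions.

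The exponent condition is read off directly from the formulas: in every case the coefficient of $z^0$ is a nonzero constant times $\vartheta^4$ (for instance $s_1 s_2\,\vartheta^4$ for $Q_1$ and $(s_1-s_2)^4\vartheta^4$ for $Q_3$), so the polynomial $P_0$ has $0$ as its only, fourfold, root and $z=0$ carries the exponent $0$ alone. This is in any case forced by the first entry $J(4)$ of each $M_i(\alpha)$ being maximally unipotent.

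The heart of the proof is self-duality, and my approach is to first show that the monodromy tuple of each $Q_i$ lies in $\Sp_4(\mathbb{C})$ and then to translate this into the operator identity $L\alpha=\alpha L^{\vee}$. Symplecticity of the monodromy I would establish by tracking the invariant bilinear form through the construction: the Heun operators, being of order two, are self-dual; $\Sym^2$ and $\bigwedge^2$ pass between symplectic and orthogonal structures, $\MC_{-1}$ interchanges them by Proposition \ref{ConstMDSPconv}(4), the nested convolution of Proposition \ref{ConstMDSPconv}(5) preserves them, and the $\Delta$-construction $\Delta(R)=\left(z^{1/2}\right)^{*}\!\left(R\otimes(-z)^{*}R\right)$ sends a self-dual $R$ to a self-dual operator. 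Composing these effects along each of the five chains lands one in the symplectic case, consistent with each $M_i(\alpha)$ consisting of symplectic Jordan types. For an irreducible fuchsian operator of order four, symplectic monodromy means its local system is isomorphic to its dual, hence $\Sol_{L^{\vee}}$ and $\Sol_L$ carry the same monodromy; multiplying the former by a suitable $\alpha$ identifies the two solution spaces, which is precisely $L\alpha=\alpha L^{\vee}$, and since all the $Q_i$ lie in $\mathbb{Q}[z,\vartheta]$ one gets $\alpha\in\mathbb{Q}(z)$.

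The main obstacle I expect is this last step: making the bookkeeping of the invariant form rigorous through the $\Delta$-construction and the nested middle convolutions, and guaranteeing that the intertwining factor $\alpha$ is genuinely a rational function (the twisting character attached to $\alpha$ must be trivial, so that $\Sol_L=\alpha\,\Sol_{L^{\vee}}$ rather than merely $\mathbb{L}\cong\mathbb{L}^{\vee}$) and defined over $\mathbb{Q}$. The cleanest fallback, in the spirit of this computational paper, is to verify $L\alpha=\alpha L^{\vee}$ directly from the displayed coefficients of each $Q_i$ by solving for $\alpha$; the symmetry already visible in the formulas reduces this to a finite, if tedious, check that simultaneously exhibits $\alpha$ over $\mathbb{Q}(z)$.
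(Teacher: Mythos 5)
Your overall strategy coincides with what the paper actually does: Theorem \ref{resumee} carries no separate proof and is simply the summary of the five explicit constructions, so the Jordan-form statements are the content of the individual propositions, the three parameters are $s_1,s_2,c$ for fixed $\alpha$, and the exponent condition and self-duality are read off from, respectively verified on, the displayed coefficients ("carrying out all computations explicitly yields\dots"). Your treatment of irreducibility (preservation under middle convolution by Proposition \ref{ConstMDSPconv}, the middle Hadamard product being defined as an irreducible right factor, and the genericity caveat for $\Delta$) and of the exponents at $z=0$ (the $z^0$-coefficient is a nonzero constant times $\vartheta^4$) is exactly right.

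The one point where your preferred route would not go through as stated is the step "symplectic monodromy for an irreducible fourth-order fuchsian operator implies $L\alpha=\alpha L^{\vee}$ with $\alpha$ a function." An isomorphism $\mathbb{L}\cong\mathbb{L}^{\vee}$ is realized on solution spaces by a differential operator $P$ of order at most $3$ with $LP=QL^{\vee}$; the property that $P$ can be taken of order zero is a property of the particular operator (equivalently, of the chosen cyclic vector), not of its local system, and it is destroyed by generic gauge transformations that preserve the monodromy. So the issue is not only whether a twisting character is trivial or whether $\alpha$ is rational versus algebraic, as you frame it, but whether the intertwiner is a multiplication operator at all. Consequently the "fallback" you describe --- solving $L\alpha=\alpha L^{\vee}$ directly on the displayed coefficients of each $Q_i$ (equivalently, checking the standard coefficient identity characterizing property (P) for fourth-order operators) --- is not an optional safety net but the actual argument, and it is also the one the paper implicitly relies on. With that substitution your proposal is a faithful reconstruction of the paper's (computational) proof.
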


\section{Identification of CY-type operators}
\subsection{How to find CY-type operators?}

We briefly recall the definition of a differential operator of CY-type:

\begin{defin}\label{CYdefin}
An irreducible differential operator $L\in\mathbb{Q}[z,\vartheta]$ is called \textbf{of CY-type} if it fulfills each of the following CY-properties:
\begin{enumerate}
 \item[(P)] $L$ is self-dual in the sense that there is an $0\neq\alpha\in\mathbb{Q}(z)$ such that $L\alpha=\alpha L^{\vee}$ holds.
 \item[(M)] All exponents of $L$ at $z=0$ are integers and equal.
 \item[(N)] $L$ has at $z=0$ an \textit{N-integral} local solution $y_0\in\mathbb{Q}\llbracket z\rrbracket$, i.e. there is an $N\in\mathbb{N}$ such that $N^mA_m\in\mathbb{Z}$ for all $m\in\mathbb{N}$.
 \item[(Q)] The \textit{special coordinate} of $L$ at $z=0$, i.e. \[q=e^{\frac{y_1}{y_0}}\in z\mathbb{Q}\llbracket z\rrbracket,\] where $y_1=\ln(z)y_0+r$, $r\in z\mathbb{Q}\llbracket z\rrbracket$ is a local solution of $L$, is N-integral.
 \item[(S)] Each of the the formal power series $\alpha_1,\dots,\alpha_n$ appearing in the local expression
\[Ly_0:=\vartheta\alpha_n\vartheta\alpha_{n-1}\cdots\alpha_1\vartheta\in\mathbb{Q}\llbracket z\rrbracket[\vartheta]\] are N-integral.
 \end{enumerate}
\end{defin}

It is clear that each M-tuple attached to a differential operator of CY-type has those properties.
For each family of operators constructed in the previous section, we try to detect those which are of CY-type. Note that each operator of families $P_{a,1}$ and $P_{a,2}$ stated in Proposition \ref{Dreieins} satisfies CY-properties (P), (M) and (N). Hence it remains to check experimentally whether (Q) and (S) hold. The results are stated in Section \ref{Opthree}.  
The other constructions we did rely on the choice of differential operators of order two. 
As stated in Theorem \ref{resumee}, each of the operators $Q_1-Q_5$ satisfies properties (M) and (P). To discuss the remaining CY-properties, we first observe that all of them depend on the choices of $s_1, s_2$ and $c$ and that none of them seems to hold for an arbitrary choice. 
A closer look at the constructions on the level of differential operators we used reveals, that all of them preserve N-integrality of solutions which are locally holomorphic near $z=0$.
Therefore, a first step towards a strategy to find CY-type operators is to guarantee, that the Heun operator $R(h)$ we start with admits an N-integral solution at $z=0$. Moreover - for reasons which are not understood yet - each constructed operator of order four seems to satisfy property (Q) if the Heun operator we started with does. Therefore, we chose the parameters $s_1, s_2, c$ in such a way that the Heun operator underlying the construction is of CY-type. All examples of this type we know can be obtained by algebraic pullbacks of hypergeometric differential operators. The majority of them are Picard-Fuchs operators for families of relatively minimal elliptic curves with section over $\mathbb{Q}$ and are obtained from \cite{Her}.

\subsection{Underlying CY-type operators of degree two}

In the upcoming tables, we state those Heun operators which are of CY-type and are suitable for the constructions of $Q_1-Q_5$. We only state the parameter $c$ explicitly, as the singularities $s_1$ and $s_2$ can be regained directly as the roots of the polynomial coefficient of $\vartheta^2$. 

For $\sign(R(h))=(0,0,0,0)$, we find the following six operators.

\setlength{\extrarowheight}{0.5mm}
\begin{longtable}{c c c}
\textbf{Number}&\textbf{Operator}&\textbf{c}\\\hline
$1$&${\vartheta}^{2}-z \left(7\,{\vartheta}^{2}+7\,\vartheta+2 \right) -8
\,{z}^{2} \left( \vartheta+1 \right) ^{2}$
&$1/4$ \\[0.5mm]
$2$&${\vartheta}^{2}-z \left( 11\,{\vartheta}^{2}+11\,\vartheta+3 \right) 
-{z}^{2} \left( \vartheta+1 \right) ^{2}
$&$-3$ \\[0.5mm]
$3$&${\vartheta}^{2}-z \left( 10\,{\vartheta}^{2}+10\,\vartheta+3 \right) 
+9\,{z}^{2} \left( \vartheta+1 \right) ^{2}
$&$1/3$\\[0.5mm]
$4$&${\vartheta}^{2}-4z \left(3\,{\vartheta}^{2}+3\,\vartheta+1 \right) 
+32\,{z}^{2} \left( \vartheta+1 \right) ^{2}
$&$1/8$\\[0.5mm]
$5$&${\vartheta}^{2}-3z \left(3\,{\vartheta}^{2}+3\,\vartheta+1 \right) +
27\,{z}^{2} \left( \vartheta+1 \right) ^{2}
$&$1/9$\\[0.5mm]
$6$&${\vartheta}^{2}-z \left(17\,{\vartheta}^{2}+17\,\vartheta+6 \right) 
+72\,{z}^{2} \left( \vartheta+1 \right) ^{2}
$&$1/12$\\[0.5mm]
\end{longtable}

\newpage

For $\sign(L)=(0,\lambda,\lambda,0)$ we find

\setlength{\extrarowheight}{0.5mm}
\begin{longtable}{c c c c}
\textbf{Nr.}& $\lambda$&\textbf{Operator}&\textbf{c}\\\hline 
$1'$& $1/2$&$4\,{\vartheta}^{2}-2z \left(28\,{\vartheta}^{2}+14\,\vartheta+3
 \right) +81\,{z}^{2} \left( 2\,\vartheta+1 \right) ^{2}$
&$1/54$\\[0.5mm]

$2'$&$1/2$&$4\,{\vartheta}^{2}-2z \left(44\,{\vartheta}^{2}+22\,\vartheta
 +5\right) +125\,{z}^{2} \left( 2\,\vartheta+1 \right) ^{2}
$&$1/50$\\[0.5mm]

$3'$&$1/2$&${\vartheta}^{2}-2z \left(10\,{\vartheta}^{2}+5\,\vartheta+1 \right) 
+16\,{z}^{2} \left( 2\,\vartheta+1 \right) ^{2}
$&$1/32$\\[0.5mm]

$4'$&$1/2$&$\vartheta^2-2z(12\vartheta^2+6\vartheta+1)+4z^2(2\vartheta+1)^2$&$1/8$\\[0.5mm]

$5'$&$1/2$&$4\,{\vartheta}^{2}+6z \left(12\,{\vartheta}^{2}+6\,\vartheta+1
 \right) -27\,{z}^{2} \left( 2\,\vartheta+1 \right) ^{2}
$&$1/18$\\[0.5mm]

$6'$&$1/2$&$4\,{\vartheta}^{2}-2z \left(68\,{\vartheta}^{2}+34\,\vartheta+5
 \right) +{z}^{2} \left( 2\,\vartheta+1 \right) ^{2}
$&$5/2$\\[0.5mm]

$7$&$1/3$&$9\,{\vartheta}^{2}-3z \left(39\,{\vartheta}^{2}+26\,\vartheta+7
 \right) +49\,{z}^{2} \left( 3\,\vartheta+2 \right) ^{2}
$&$1/21$\\[0.5mm]

$8$&$1/3$&$9\,{\vartheta}^{2}+12z \left(15\,{\vartheta}^{2}+10\,\vartheta
+2 \right) -8\,{z}^{2} \left( 3\,\vartheta+2 \right) ^{2}
$&$1/3$\\[0.5mm]

$9$&$1/4$&$16\vartheta^2-4z(24\vartheta^2+18\vartheta+5)+25\,{z}^{2} \left( 4\,\vartheta+3 \right) ^{2}$&$1/20$\\[0.5mm]

$10$&$1/4$&$16\vartheta^2-4z(56\vartheta^2+42\vartheta+9)+\,{z}^{2} \left( 4\,\vartheta+3 \right) ^{2}$&$9/4$\\[0.5mm]
\end{longtable}

In the case $\sign(L)=\left(0,\frac{1}{2},\frac{1}{2},\lambda\right)$, we find

\setlength{\extrarowheight}{0.5mm}
\begin{longtable}{c c c c}
\textbf{Nr.}& $\lambda$&\textbf{Operator}&\textbf{c}\\\hline 
$1''$&$1/2$&${\vartheta}^{2}+2z \left(14\,{\vartheta}^{2}+7\,\vartheta +1\right) -
8\,{z}^{2} \left( 4\,\vartheta+1 \right)  \left( 4\,\vartheta+3
 \right)$&$1/64$\\[0.5mm]

$2''$&$1/2$&${\vartheta}^{2}+z \left(44\,{\vartheta}^{2}+22\,\vartheta+3 \right) -
{z}^{2} \left( 4\,\vartheta+1 \right)  \left( 4\,\vartheta+3 \right) 
$&$3/16$\\[0.5mm] 

$3''$&$1/2$&${\vartheta}^{2}-z \left( 40\,{\vartheta}^{2}+20\,\vartheta+3 \right) 
+9\,{z}^{2} \left( 4\,\vartheta+1 \right)  \left( 4\,\vartheta+3
 \right)$&$1/48$\\[0.5mm]

$4''$&$1/2$&${\vartheta}^{2}-4z \left(12\,{\vartheta}^{2}+6\,\vartheta +1\right) 
+32\,{z}^{2} \left( 4\,\vartheta+1 \right)  \left( 4\,\vartheta+3
 \right) 
$&$1/128$\\[0.5mm]

$5''$&$1/2$&$\,{\vartheta}^{2}+3z \left( 12\,{\vartheta}^{2}+6\,\vartheta+1
 \right) +27\,{z}^{2} \left( 4\,\vartheta+1 \right)  \left( 4\,
\vartheta+3 \right)$&$-1/144$\\[0.5mm]

$6''$&$1/2$&${\vartheta}^{2}-2z \left(34\,{\vartheta}^{2}+17\,\vartheta+3 \right) 
+72\,{z}^{2} \left( 4\,\vartheta+1 \right)  \left( 4\,\vartheta+3
 \right)$&$1/192$\\[0.5mm]

$7'$&$1/3$&${\vartheta}^{2}+z \left(26\,{\vartheta}^{2}+13\,\vartheta+2 \right) -
3\,{z}^{2} \left( 3\,\vartheta+1 \right)  \left( 3\,\vartheta+2
 \right)$&$2/27$\\[0.5mm]

$8'$&$1/3$&${\vartheta}^{2}-4z \left(10\,{\vartheta}^{2}+5\,\vartheta+1 \right) 
+48\,{z}^{2} \left( 3\,\vartheta+1 \right)  \left( 3\,\vartheta+2
 \right) 
$&$1/108$\\[0.5mm]

$9'$&$1/4$&${\vartheta}^{2}-z \left(12\,{\vartheta}^{2}+6\,\vartheta+1 \right) -
{z}^{2} \left( 8\,\vartheta+3 \right)  \left( 8\,\vartheta+5 \right) 
$&$-1/64$\\[0.5mm]

$10'$&$1/4$&${\vartheta}^{2}-z \left(28\,{\vartheta}^{2}+3\,\vartheta+3 \right) +
12{z}^{2} \left( 8\,\vartheta+3 \right)  \left( 8\,\vartheta+5 \right) 
$&$1/256$\\[0.5mm]
\end{longtable}

The operators with signature $(0,\frac{1}{2},\frac{1}{2},\lambda)$ can be constructed as follows:
Consider a Heun operator $L_1$ with $\sign(L_1)=(0,\lambda,\lambda,0)$ such that $s_1$ and $s_2$ are roots of $p(X)=aX^2+bX+1\in\mathbb{Z}[X]$. 
Then \[L_2=\left(\left(-z/p(z)\right)^{\vee}\right)^{*}L_1\in\mathbb{Q}[z,\vartheta]\] has four singularities $\{0,\tilde{s}_1,\tilde{s}_2,\infty\}$ and $\sign(L_2)=(0,\frac{1}{2},\frac{1}{2},\lambda)$ holds. Moreover, $L_2$ is of CY-type, if $L_1$ is. We referred to this construction by decorating the number of the operator with a $(\cdot)'$.

Up to a tensor product with an operator of degree one, the following operators are Picard-Fuchs operators for the following families $Y^2=4X^3-g_2(z)X-g_3(z)$ of elliptic curves:

\setlength{\extrarowheight}{0.5mm}
\begin{longtable}{c c c}
\textbf{Number}&\multicolumn{2}{c}{\textbf{Families}}\\[0.5mm]\hline\hline
\multirow{2}{*}{$1$}& {$g_2$}&{$3\left( 4\,z+1 \right) \left( 64\,{z}^{3}+48\,{z}^{2}-12\,z+1
 \right)$}\\[0.5mm]
&$g_3$&$\left( 8\,{z}^{2}+4\,z-1 \right)  \left( 512\,{z}^{4}+512\,{z}^{3}
+8\,z-1 \right)$\\[0.5mm]\hline
\multirow{2}{*}{$2$}&{$g_2$}& {$3(z^4-12z^3+14z^2+12z+1)$}\\[0.5mm]
&$g_3$&${z}^{6}-18\,{z}^{5}+75\,{z}^{4}+75\,{z}^{2}+18\,z+1$\\[0.5mm]\hline
\multirow{2}{*}{$3$}&{$g_2$}&{$3\, \left( 3\,z+1 \right)  \left( 243\,{z}^{3}+243\,{z}^
{2}+9\,z+1 \right)$}\\[0.5mm]
&$g_3$& $\left( 27\,{z}^{2}+18\,z-1 \right)  \left( 729\,{z}^
{4}+972\,{z}^{3}+270\,{z}^{2}+36\,z+1 \right)$\\[0.5mm]\hline 
\multirow{2}{*}{$4$}&{$g_2$}&{$3(65536\,{z}^{4}-32768\,{z}^{3}+5120\,{z}^{2}-256\,z+1)$} \\[0.5mm]
&$g_3$&$\left( 128\,{z}^{2}-32\,z+1 \right)  \left( 131072\,{z}^{4}-65536\,{z}^{3}+10240\,{z}^{2}-512\,z-1 \right)$\\[0.5mm]\hline
\multirow{2}{*}{$5$}&{$g_2$}&$3\, \left( 9\,z-1 \right)  \left( 6561\,{z}^{3}-2187\,{z}^{2}+243\,z-1
 \right)$\\[0.5mm]
&$g_3$&$14348907\,{z}^{6}-9565938\,{z}^{5}+2657205\,{z}^{4}-367416\,{z}^{3}+
24057\,{z}^{2}-486\,z-1$\\[0.5mm]\hline
\multirow{2}{*}{$6$}&{$g_2$}&$108\, \left( 12\,z -1\right)  \left( 15552\,{z}^{3}-3888\,{z}^{2}+252\,z-1 \right)$\\[0.5mm]
&$g_3$&$216\, \left( 216\,{z}^{2}-36\,z+1 \right)  \left( 373248\,{z}^{4}-
124416\,{z}^{3}+13824\,{z}^{2}-504\,z-1 \right)$\\[0.5mm]\hline
\multirow{2}{*}{$7$}&{$g_2$}&{$147\, \left( 196\,{z}^{2}-26\,z+1 \right)  \left( 9604\,{z}^{2}-490\,z
+1 \right)$}\\[0.5mm]
&$g_3$&$343\, \left( 196\,{z}^{2}-26\,z+1 \right)  \left( 13176688\,{z}^{4}-
1882384\,{z}^{3}+86436\,{z}^{2}-980\,z-1 \right)$\\[0.5mm]\hline
\multirow{2}{*}{$8$}&{$g_2$}&{$3(64z^4-192z^3+64z^2+24z+1)$}\\[0.5mm]
&$g_3$&$512\,{z}^{6}-2304\,{z}^{5}+2496\,{z}^{4}+312\,{z}^{2}+36\,z+1$\\[0.5mm]\hline
\multirow{2}{*}{$2'$}&{$g_2$}&{$3\, \left( 500\,{z}^{2}+44\,z+1 \right)  \left( 20\,{z}^{2}+20\,z+1
 \right)$}\\[0.5mm]
&$g_3$&$- \left( 500\,{z}^{2}+44\,z+1 \right) ^{2} \left( 4\,{z}^{2}-8\,z-1
 \right)$\\[0.5mm]\hline
\multirow{2}{*}{$5'$}&{$g_2$}&{$ \left( 81\,{z}^{2}-54\,z-3 \right)  \left( 9\,z+1 \right)  \left( 3\,
z-1 \right) 
$}\\[0.5mm]
&$g_3$&$- \left( 729\,{z}^{4}-972\,{z}^{3}+270\,{z}^{2}+36\,z+1 \right) 
 \left( 27\,{z}^{2}+1 \right) 
$\\[0.5mm]\hline
\multirow{2}{*}{$7'$}&{$g_2$}&{$-3\, \left( 432\,{z}^{2}-40\,z+1 \right)  \left( 16\,z-1 \right) $}\\[0.5mm]
&$g_3$&$\left( 432\,{z}^{2}-40\,z+1 \right) ^{2} \left( 4\,z-1 \right)$\\[0.5mm]\hline
\end{longtable}

\subsection{Resulting operators of order four with three singularities}\label{Opthree}
We state for each of the families the operators of CY-type inside we found with the methods described before. For each of these operators, we have normalized the coordinate via $z\mapsto \lambda z$ in such a way that its q-coordinate lies in $z\mathbb{Z}\llbracket z\rrbracket$ and is minimal in the sense that there is no $N\in\mathbb{N}$ such that $N^m$ divides the $m$-th coefficient of the q-coordinate for all $m\in\mathbb{N}$. To identify the operators with previously known examples as collected in \cite[Appendix A]{AESZ}, the column \texttt{source} provides their first three genus zero instanton numbers as defined there. If the instanton numbers of two CY-type operators coincide, they can be locally transformed into each other, see e.g. \cite[Proposition 3.4.9]{Diss}. 

In case that all previously known operators with the same instanton numbers have more singularities than the one stated here, the numbers are written in \textit{italics}. In case that we know an operator having the same instanton as the one we found but a smaller number singularities, we write them {\scriptsize {smaller}}. If the numbers are written \textbf{boldly}, none of the previously known operators has the same instanton numbers.

\textbf{Family:}\quad $P_{a,1}\otimes (1-z)^{-a}$, where
\begin{align*}
P_{a,1}&:=\left(\left(S_a\otimes z^{-\frac{1}{4}-a}\right)\star I_{\frac{5}{4}+a}\otimes z^{\frac{1}{4}+a} (1-z)^{a-\frac{1}{4}}\right)\star I_{\frac{3}{4}-a}\\& =16\,{\vartheta}^{4}-z\left(32\vartheta^4+64(1-a)\vartheta^3+\left(63-96a-112a^2\right)\vartheta^2\right)\\&\quad-z\left(\left(31-64a-112a^2\right)\vartheta+6-16a-32a^2\right)\\&\quad+{z}^{2} \left( 4\,\vartheta+5-4\,a
 \right)  \left( \vartheta+1-4\,a \right)  \left( \vartheta+1+2\,a
 \right)  \left( 4\,\vartheta+3-4\,a \right)
\end{align*}

\textbf{Riemann scheme:}

\[\mathcal{R}(P_{a,1}\otimes (1-z)^{-a})=\begin{Bmatrix}
0&1&\infty\\[0.1cm] \hline\\[-0.25cm]\begin{array}{c} 0
\\\noalign{\medskip}0
\\\noalign{\medskip}0
\\\noalign{\medskip}0
\\\noalign{\medskip} \end{array}&\begin{array}{c} -a
\\\noalign{\medskip}a
\\\noalign{\medskip}1-a
\\\noalign{\medskip}1+a
\\\noalign{\medskip} \end{array}&\begin{array}{c} 1-3a
\\\noalign{\medskip}\frac{3}{4}
\\\noalign{\medskip}\frac{5}{4}
\\\noalign{\medskip}1+3a
\\\noalign{\medskip}\end{array}
\end{Bmatrix}\]
\newpage
\textbf{CY-operators:} We seem to get operators of CY-type in the following cases:
\begin{longtable}{c c c c c}
\textbf{a}&$0$&$\frac{1}{6}$&$\frac{1}{8}$&$\frac{1}{12}$\\
\textbf{source}& $-4, -11, -44$& $60,-7635,307860$& \textbf{-12, -186, -1668}& $-6, -33, -170$   
\end{longtable}

Moreover, replacing $a$ by $-a$ yields operators with the same instanton numbers.

\textbf{Family:}\quad$P_{2,a}\otimes (1-z)^{-3a}$, where
\begin{align*}P_{a,2}&:=\left(\left(R_a\otimes z^{-\frac{1}{4}-3a}\right)\star I_{\frac{5}{4}+3a}\otimes z^{3a+\frac{1}{4}}(1-z)^{3a-\frac{1}{4}}\right)\star I_{\frac{3}{4}-3a}\\&\quad16\,{\vartheta}^{4}+z \left( -32\,{\vartheta}^{4}-64\,{\vartheta}^{3}+
192\,{\vartheta}^{3}a-63\,{\vartheta}^{2}-272\,{\vartheta}^{2}{a}^{2}+
288\,{\vartheta}^{2}a\right)\\&\quad+z\left(\,192\vartheta\,a-272\,\vartheta\,{a}^{2}-31\,
\vartheta-6+48\,a-96\,{a}^{2} \right)\\&\quad +{z}^{2} \left( 4\,\vartheta+3-
12\,a \right)  \left( \vartheta+1-2\,a \right)  \left( \vartheta+1-4\,
a \right)  \left( 4\,\vartheta+5-12\,a \right) 
\end{align*}

\textbf{Riemann scheme:}
\[\mathcal{R}(P_{a,2}\otimes (1-z)^{-3a})=\begin{Bmatrix}
0&1&\infty\\[0.1cm] \hline\\[-0.25cm]\begin{array}{c} 0
\\\noalign{\medskip}0
\\\noalign{\medskip}0
\\\noalign{\medskip}0
\\\noalign{\medskip} \end{array}&\begin{array}{c} -3a
\\\noalign{\medskip}3a
\\\noalign{\medskip}1-3a
\\\noalign{\medskip}1+3a
\\\noalign{\medskip} \end{array}&\begin{array}{c} 1-a
\\\noalign{\medskip}\frac{3}{4}
\\\noalign{\medskip}\frac{5}{4}
\\\noalign{\medskip}1+a
\\\noalign{\medskip}\end{array}
\end{Bmatrix}\]

\textbf{CY-operators:} We seem to get operators of CY-type in the following cases:
\begin{longtable}{c c c c}
\textbf{a}&$0$&$\frac{1}{6}$&$\frac{1}{8}$\\
\textbf{source}& $-4, -11, -44$& {\scriptsize {60,-7635,307860}}& \textbf{20, 290, 28820/3}   
\end{longtable}
Again, replacing $a$ by $-a$ yields operators with the same instanton numbers.

Irreducible right factors $P_3(R)$ of $\Delta(R)\star_H I_{\frac{1}{2}}$, where the tuple of Jordan forms associated to the monodromy tuple of $R$ is similar to $J=(J(2),[\gamma], [\gamma], [\delta])$, the non-apparent singularities of $R$ are $\{0,-s,s,\infty\}$ and $R\not\in\mathbb{C}\left[z^2,\vartheta\right]$.
We only found CY-type operators of order four in the following two cases:\\

\textbf{First case:}
$R={\vartheta}^{2}+z \left( 4-32\,{\vartheta}^{2}+32\,\vartheta \right) +
{z}^{2} \left( 640+1024\,{\vartheta}^{2}+2048\,\vartheta \right) -8192
\,{z}^{3} \left( 2\,\vartheta+1 \right) ^{2}$

\begin{align*}&P_3(R)={\vartheta}^{4}+z \left( 656-4096\,{\vartheta}^{4}+24576\,{\vartheta}^
{3}+17408\,{\vartheta}^{2}+5120\,\vartheta \right)\\& +{z}^{2} \left( 
8716288-33554432\,{\vartheta}^{4}-134217728\,{\vartheta}^{3}+46137344
\,{\vartheta}^{2}+41943040\,\vartheta \right)\\& -{z}^{3} \left(
57176752128-137438953472\,{\vartheta}^{4}+274877906944\,{\vartheta}^{3
}+807453851648\,{\vartheta}^{2}+360777252864\,\vartheta \right)\\& +{z}^{
4} \left( 281474976710656\,{\vartheta}^{4}+
2251799813685248\,{\vartheta}^{3}+1759218604441600\,{\vartheta}^{2}\right)\\&+z^4\left(
562949953421312\,\vartheta +60473139527680\right)\\& -72057594037927936\,{z}^{5} \left( 
2\,\vartheta+1 \right) ^{4}
\end{align*}

The first three genus zero instanton numbers read $-3488, -1406056, -1142687008$.
\newpage
\textbf{Second case:}
$R=\vartheta^{2}+z\left(36-288({\vartheta}^{2}+\vartheta)\right) -{z}^{2} \left(11904+27648\,{\vartheta}^{2}+55296\vartheta \right)+884736\,{z}^{3} \left( 3\,\vartheta+1 \right)\left( 3\,\vartheta+2 \right)$

\begin{align*}
&P_3(R)={\vartheta}^{4}+z \left( -10608-884736\,{\vartheta}^{4}+221184\,{
\vartheta}^{3}+49152\,{\vartheta}^{2}-61440\,\vartheta \right)\\& +{z}^{2
} \left( -350355456+269072990208\,{\vartheta}^{4}-24461180928\,{
\vartheta}^{3}+124004597760\,{\vartheta}^{2}+15627976704\,\vartheta
 \right)\\& -{z}^{3} \left( 32462531054272512\,{
\vartheta}^{4}+24346898290704384\,{\vartheta}^{3}+20063647665487872\,{
\vartheta}^{2}\right)\\&-z^3\left(5748573207527424\,\vartheta+632272604626944 \right)\\& +
37396835774521933824\,{z}^{4} \left( 3\,\vartheta+1 \right)  \left( 2
\,\vartheta+1 \right) ^{2} \left( 3\,\vartheta+2 \right) 
\end{align*}

The first three genus zero instanton numbers read $-188832, -3134817768, -101990911789344$.

\subsection{Resulting operators of order four with four singularities}\label{Opfour}

We state for each of the families the operators of CY-type inside we found with the methods described before. For each of these operators, we have normalized the coordinate via $z\mapsto \lambda z$ in such a way that its q-coordinate lies in $z\mathbb{Z}\llbracket z\rrbracket$ and is minimal in the sense that there is no $N\in\mathbb{N}$ such that $N^m$ divides the $m$-th coefficient of the q-coordinate for all $m\in\mathbb{N}$. To identify the operators with previously known examples as collected in \cite[Appendix A]{AESZ}, the column \texttt{source} provides their first three genus zero instanton numbers as defined there. If the instanton numbers of two CY-type operators coincide, they can be locally transformed into each other, see e.g. \cite[Proposition 3.4.9]{Diss}. In case that all previously known operators with the same instanton numbers have more singularities than the one stated here, the numbers are written in \textit{italics}. If the numbers are written \textbf{boldly}, none of the previously known operators has the same instanton numbers.

\textbf{Family:}
\begin{align*}
&Q_1(c,s,a)={\vartheta}^{4}s-z \left( \vartheta+a \right)  \left( \vartheta+1-a
 \right)  \left( {\vartheta}^{2}(1+s)+\vartheta(1+s)+c \right)\\&\quad +{z}^{2} \left( \vartheta+2-a \right)  \left( \vartheta+
1-a \right)  \left( \vartheta+1+a \right)  \left( \vartheta+a \right) 
\end{align*}  

\textbf{Riemann scheme:} \[\begin{Bmatrix}
0&s_1&s_2&\infty\\[0.1cm] \hline\\[-0.25cm]\begin{array}{c} 0
\\\noalign{\medskip}0
\\\noalign{\medskip}0
\\\noalign{\medskip}0
\\\noalign{\medskip} \end{array}&\begin{array}{c} 0
\\\noalign{\medskip}1
\\\noalign{\medskip}1
\\\noalign{\medskip}2
\\\noalign{\medskip} \end{array}&\begin{array}{c} 0
\\\noalign{\medskip}1
\\\noalign{\medskip}1
\\\noalign{\medskip}2
\\\noalign{\medskip}\end{array}&\begin{array}{c} a
\\\noalign{\medskip}1-a
\\\noalign{\medskip}1+a
\\\noalign{\medskip}2-a
\\\noalign{\medskip}\end{array}
\end{Bmatrix}.\]

\textbf{CY-operators:} We seem to get operators of CY-type if $a\in\left\{\frac{1}{2},\frac{1}{3},\frac{1}{4},\frac{1}{6}\right\}$. In particular, we find
\setlength{\extrarowheight}{0.5mm}
\begin{longtable}{c c c}
\textbf{Nr.}&\textbf{Operator}&\textbf{Source}\\\hline\hline
\multirow{4}{*}{$1$}&{\multirow{4}{*}{\begin{minipage}{1cm}\centering\begin{align*}{\vartheta}^{4}-z \left( 7\,{\vartheta}^{2}+7\,\vartheta+2 \right) 
 \left( \vartheta+a \right)  \left( \vartheta+1-a \right)\\ -8\,{z}^{2}
 \left( \vartheta+2-a \right)  \left( \vartheta+1-a \right)  \left( 
\vartheta+1+a \right)  \left( \vartheta+a \right)\end{align*}\end{minipage}}}&$a=\frac{1}{2}:\ 12,163,3204$
\\[0.5mm]
&&$a=\frac{1}{3}:\ 21, 480, 15894$\\[0.5mm]
&&$a=\frac{1}{4}:\ 52, 2814, 220220$\\[0.5mm]
&&$a=\frac{1}{6}:\ 372, 136182, 71562236$\\[0.5mm]\hline

\multirow{4}{*}{$2$}&{\multirow{4}{*}{\begin{minipage}{1cm}\centering\begin{align*}{\vartheta}^{4}-z \left( 11\,{\vartheta}^{2}+11\,\vartheta+3 \right) 
 \left( \vartheta+a \right)  \left( \vartheta+1-a \right)\\ -{z}^{2}
 \left( \vartheta+2-a \right)  \left( \vartheta+1-a \right)  \left( 
\vartheta+1+a \right)  \left( \vartheta+a \right) 
\end{align*}\end{minipage}}}&{$a=\frac{1}{2}:\ 20, 277, 8220$}\\[0.5mm] 
&&$a=\frac{1}{3}:\ 36, 837, 41421$\\[0.5mm]
&&$a=\frac{1}{4}:\ 92, 5052, 585396$\\[0.5mm]
&&$a=\frac{1}{6}:\ 684, 253314, 195638820$\\[0.5mm]\hline\newpage

\multirow{4}{*}{$3$}&{\multirow{4}{*}{\begin{minipage}{1cm}\centering\begin{align*}{\vartheta}^{4}-z \left( 10\,{\vartheta}^{2}+10\,\vartheta+3 \right) 
 \left( \vartheta+a \right)  \left( \vartheta+1-a \right)\\ +9\,{z}^{2}
 \left( \vartheta+2-a \right)  \left( \vartheta+1-a \right)  \left( 
\vartheta+1+a \right)  \left( \vartheta+a \right) 
\end{align*}\end{minipage}}}&{$a=\frac{1}{2}:\ 16, 142, 11056/3$}\\[0.5mm]
&&$a=\frac{1}{3}:\ 27, 432, 18089$\\[0.5mm]
&&$a=\frac{1}{4}:\ 64, 2616, 246848$\\[0.5mm]
&&$a=\frac{1}{6}:\ 432, 130842, 78259376$\\[0.5mm]\hline

\multirow{4}{*}{$4$}&{\multirow{4}{*}{\begin{minipage}{1cm}\centering\begin{align*}{\vartheta}^{4}-4\,z \left( 3\,{\vartheta}^{2}+3\,\vartheta+1 \right) 
 \left( \vartheta+a \right)  \left( \vartheta+1-a \right)\\ +32\,{z}^{2}
 \left( \vartheta+2-a \right)  \left( \vartheta+1-a \right)  \left( 
\vartheta+1+a \right)  \left( \vartheta+a \right) 
\end{align*}\end{minipage}}}&{$a=\frac{1}{2}:\ 16, 42, 1232$}\\[0.5mm]
&&$a=\frac{1}{3}:\ 24, 291/2, 5832$\\[0.5mm]
&&$a=\frac{1}{4}:\ 48, 998, 73328$\\[0.5mm]
&&$a=\frac{1}{6}:\ 240, 57102, 19105840$\\[0.5mm]\hline

\multirow{4}{*}{$5$}&{\multirow{4}{*}{\begin{minipage}{1cm}\centering\begin{align*}{\vartheta}^{4}-3\,z \left( 3\,{\vartheta}^{2}+3\,\vartheta+1 \right) 
 \left( \vartheta+a \right)  \left( \vartheta+1-a \right)\\ +27\,{z}^{2}
 \left( \vartheta+2-a \right)  \left( \vartheta+1-a \right)  \left( 
\vartheta+1+a \right)  \left( \vartheta+a \right) 
\end{align*}\end{minipage}}}&$a=\frac{1}{2}:\ 12, -42, -3284/3$\\[0.5mm]
&&$a=\frac{1}{3}:\ 18,-207/2,-5177$\\[0.5mm]
&&$a=\frac{1}{4}:\ 36, -477, -206716/3$\\[0.5mm]
&&$a=\frac{1}{6}:\ 180, -15615, -21847076$\\[0.5mm]\hline

\multirow{4}{*}{$6$}&{\multirow{4}{*}{\begin{minipage}{1cm}\centering\begin{align*}{\vartheta}^{4}-z \left( 17\,{\vartheta}^{2}+17\,\vartheta+6 \right) 
 \left( \vartheta+a \right)  \left( \vartheta+1-a \right)\\ +72\,{z}^{2}
 \left( \vartheta+2-a \right)  \left( \vartheta+1-a \right)  \left( 
\vartheta+1+a \right)  \left( \vartheta+a \right) 
\end{align*}\end{minipage}}}&{$a=\frac{1}{2}:\ 20, 2, 1684/3$}\\[0.5mm]
&&$a=\frac{1}{3}:\ 27, 189/4, 2618$\\[0.5mm]
&&$a=\frac{1}{4}:\ 44, 607, 22500$\\[0.5mm]
&&$a=\frac{1}{6}:\ 108, 54135, -4945756$\\[0.5mm]\hline
\end{longtable} 

\textbf{Family:}
\begin{align*}
&Q_2(c,s,\lambda)=4\,{\vartheta}^{4}s-z \left( 2\,\vartheta+1 \right) ^{2} \left( {
\vartheta}^{2}+{\vartheta}^{2}s+\vartheta+\vartheta\,s+4\,c \right)\\&\quad +{
z}^{2} \left( 2\,\vartheta+3 \right)  \left( 2\,\vartheta+1 \right) 
 \left( \vartheta+1+\lambda \right)  \left( \vartheta+1-\lambda \right), 
\end{align*}

\textbf{Riemann scheme:} \[\begin{Bmatrix}
0&1&s&\infty\\[0.1cm] \hline\\[-0.25cm]\begin{array}{c} 0
\\\noalign{\medskip}0
\\\noalign{\medskip}0
\\\noalign{\medskip}0
\\\noalign{\medskip} \end{array}&\begin{array}{c} 0
\\\noalign{\medskip}1
\\\noalign{\medskip}1
\\\noalign{\medskip}2
\\\noalign{\medskip} \end{array}&\begin{array}{c} 0
\\\noalign{\medskip}1
\\\noalign{\medskip}1
\\\noalign{\medskip}2
\\\noalign{\medskip}\end{array}&\begin{array}{c} \frac{1}{2}
\\\noalign{\medskip}1-\lambda
\\\noalign{\medskip}1+\lambda
\\\noalign{\medskip}\frac{3}{2}
\\\noalign{\medskip}\end{array}
\end{Bmatrix}.\]

\textbf{CY-operators:} For $\lambda=\frac{1}{2}$ we get operators of CY-type, which we have already constructed before and thus omit them in the following table. Despite that we find

\begin{longtable}{c c c}
\textbf{Number}&\textbf{Operator}&\textbf{Source}\\\hline\hline
$1'$&\begin{minipage}{1cm}\begin{align*}4\,{\vartheta}^{4}-2\,z \left( 2\,\vartheta+1 \right) ^{2} \left( 7\,{
\vartheta}^{2}+7\,\vartheta+3 \right)\\ +81\,{z}^{2} \left( 2\,\vartheta
+1 \right)  \left( \vartheta+1 \right) ^{2} \left( 2\,\vartheta+3
 \right) 
\end{align*}\end{minipage} 
&$2, -7, -104$\\

$2'$&\begin{minipage}{1cm}\begin{align*}4\,{\vartheta}^{4}-2\,z \left( 2\,\vartheta+1 \right) ^{2} \left( 11\,
{\vartheta}^{2}+11\,\vartheta+5 \right)\\ +125\,{z}^{2} \left( 2\,
\vartheta+1 \right)  \left( \vartheta+1 \right) ^{2} \left( 2\,
\vartheta+3 \right) 
\end{align*}\end{minipage} 
&$2, 4, -8$\\

$3'$&\begin{minipage}{1cm}\begin{align*}{\vartheta}^{4}-z \left( 2\,\vartheta+1 \right) ^{2} \left( 5\,{
\vartheta}^{2}+5\,\vartheta+2 \right)\\ +16\,{z}^{2} \left( 2\,\vartheta
+1 \right)  \left( \vartheta+1 \right) ^{2} \left( 2\,\vartheta+3
 \right) 
\end{align*}\end{minipage} 
&$4, 20, 644/3$\\

$4'$&\begin{minipage}{1cm}\begin{align*}{\vartheta}^{4}-2\,z \left( 2\,\vartheta+1 \right) ^{2} \left( 3\,{
\vartheta}^{2}+3\,\vartheta+1 \right)\\ +4\,{z}^{2} \left( 2\,\vartheta+
1 \right)  \left( \vartheta+1 \right) ^{2} \left( 2\,\vartheta+3
 \right) 
\end{align*}\end{minipage} 
&$8, 63, 1000$\\

$5'$&\begin{minipage}{1cm}\begin{align*}4\,{\vartheta}^{4}+6\,z \left( 2\,\vartheta+1 \right) ^{2} \left( 3\,
{\vartheta}^{2}+3\,\vartheta+1 \right) \\-27\,{z}^{2} \left( 2\,
\vartheta+1 \right)  \left( \vartheta+1 \right) ^{2} \left( 2\,
\vartheta+3 \right) 
\end{align*}\end{minipage} 
&$6, 93/2, 608$\\

$6'$&\begin{minipage}{1cm}\begin{align*}4\,{\vartheta}^{4}-2\,z \left( 2\,\vartheta+1 \right) ^{2} \left( 17\,
{\vartheta}^{2}+17\,\vartheta+5 \right)\\ +{z}^{2} \left( 2\,\vartheta+1
 \right)  \left( \vartheta+1 \right) ^{2} \left( 2\,\vartheta+3
 \right) 
\end{align*}\end{minipage} 
&$14, 303/2, 10424/3$\\

$7'$&\begin{minipage}{1cm}\begin{align*}4\,{\vartheta}^{4}+2\,z \left( 2\,\vartheta+1 \right) ^{2} \left( 13
\,{\vartheta}^{2}+13\,\vartheta+4 \right)\\ -3\,{z}^{2} \left( 2\,
\vartheta+1 \right)  \left( 3\,\vartheta+2 \right)  \left( 3\,
\vartheta+4 \right)  \left( 2\,\vartheta+3 \right) 
\end{align*}\end{minipage} 
&$10, 191/2, 1724$\\

$8'$&\begin{minipage}{1cm}\begin{align*}{\vartheta}^{4}-8\,z \left( 2\,\vartheta+1 \right) ^{2} \left( 5\,{
\vartheta}^{2}+5\,\vartheta+2 \right)\\ +192\,{z}^{2} \left( 2\,\vartheta
+1 \right)  \left( 3\,\vartheta+2 \right)  \left( 3\,\vartheta+4
 \right)  \left( 2\,\vartheta+3 \right) 
\end{align*}\end{minipage} 
&strange\\

$9'$&\begin{minipage}{1cm}\begin{align*}{\vartheta}^{4}-4\,z \left( 2\,\vartheta+1 \right) ^{2} \left( 3\,{
\vartheta}^{2}+3\,\vartheta+1 \right)\\ -16\,{z}^{2} \left( 2\,\vartheta
+1 \right)  \left( 4\,\vartheta+3 \right)  \left( 4\,\vartheta+5
 \right)  \left( 2\,\vartheta+3 \right) 
\end{align*}\end{minipage}&$4, 39, 364$\\

$10'$&\begin{minipage}{1cm}\begin{align*}{\vartheta}^{4}-4\,z \left( 2\,\vartheta+1 \right) ^{2} \left( 7\,{
\vartheta}^{2}+7\,\vartheta+3 \right)\\ +48\,{z}^{2} \left( 2\,\vartheta
+1 \right)  \left( 4\,\vartheta+3 \right)  \left( 4\,\vartheta+5
 \right)  \left( 2\,\vartheta+3 \right) 
\end{align*}\end{minipage}&$4, 7, 556/9$\\
\end{longtable}

Note that if we start with $8'$, we end up with an operator which seems to fulfill (Q) and (S) but admits no genus zero instanton numbers in $\mathbb{Z}\left[\frac{1}{N}\right]$. Actually, this is the only known example of this type.

\textbf{Family:}
\begin{align*}
&Q_3(s_1,s_2,\lambda,c)={\vartheta}^{4} \left( s_{{1}}-s_{{2}} \right) ^{4}-z \left( s_{{1}}-s
_{{2}} \right) ^{2} ((s^2_1-10s_1s_2+s^2_2)(\vartheta^4+2\vartheta^3))\\&-z \left( s_{{1}}-s
_{{2}} \right) ^{2}\vartheta^2\left((s_1^2+s_2^2)(1+\lambda-\lambda^2)+2s_1s_2(\lambda^2+\lambda-12)+2c(s_1+s_2)\right)
\\&-z \left( s_{{1}}-s
_{{2}} \right) ^{2}\vartheta\left((s_1^2+s_2^2)\lambda(1-\lambda)+2s_1s_2(\lambda^2+\lambda-7)+2c(s_1+s_2)\right)
\\&-z \left( s_{{1}}-s
_{{2}} \right) ^{2}(s_1s_2(\lambda^2-3)+c\lambda(s_1+s_2)+c^2)
\\&-4z^2s_1s_2(\vartheta+1)^2\left(2(s^2_1-4s_1s_2+s^2_2)(\vartheta^2+2\vartheta)\right)\\&-4z^2s_1s_2(\vartheta+1)^2\left((s_1^2+s_2^2)(3+\lambda-2\lambda^2)+s_1s_2(4\lambda^2+2\lambda-13)+2c(s_1+s_2)\right)
\\&-4z^3s_1^2s_2^2(\vartheta+1)(\vartheta+2)(2\vartheta+3+2\lambda)(2\vartheta+3-2\lambda)
\end{align*}

\textbf{Riemann scheme:} \[\begin{Bmatrix}
0&1&-\frac{(s-1)^2}{4s}&\infty\\[0.1cm] \hline\\[-0.25cm]\begin{array}{c} 0
\\\noalign{\medskip}0
\\\noalign{\medskip}0
\\\noalign{\medskip}0
\\\noalign{\medskip} \end{array}&\begin{array}{c} 0
\\\noalign{\medskip}1
\\\noalign{\medskip}1
\\\noalign{\medskip}2
\\\noalign{\medskip}\end{array}
&\begin{array}{c} -\frac{1}{2}
\\\noalign{\medskip}0
\\\noalign{\medskip}1
\\\noalign{\medskip}\frac{3}{2}\\\noalign{\medskip}\end{array}&\begin{array}{c} 1
\\\noalign{\medskip}\frac{3}{2}-\lambda
\\\noalign{\medskip}\frac{3}{2}+\lambda
\\\noalign{\medskip}2
\\\noalign{\medskip} \end{array}
\end{Bmatrix}\]

\textbf{CY-operators:} \begin{longtable}{c c c}
\textbf{Number}&\textbf{Operator}&\textbf{Source}\\\hline\hline
$1$&\begin{minipage}{1cm}\begin{align*}{\vartheta}^{4}-z \left( 28+145\,{\vartheta}^{4}+290\,{\vartheta}^{3}+
285\,{\vartheta}^{2}+140\,\vartheta \right)\\ +32\,{z}^{2} \left( 194\,{
\vartheta}^{2}+388\,\vartheta+327 \right)  \left( \vartheta+1 \right) 
^{2}\\-20736\,{z}^{3} \left( \vartheta+2 \right)  \left( \vartheta+1
 \right)  \left( 2\,\vartheta+3 \right) ^{2}
\end{align*}\end{minipage}&\textit{5, 42, 454}\\

$2$&\begin{minipage}{1cm}\begin{align*}{\vartheta}^{4}-z \left( 19\,{\vartheta}^{2}+19\,\vartheta+6 \right) 
 \left( 7\,{\vartheta}^{2}+7\,\vartheta+2 \right)\\ +8\,{z}^{2} \left( 
127\,{\vartheta}^{2}+254\,\vartheta+224 \right)  \left( \vartheta+1
 \right) ^{2}\\-500\,{z}^{3} \left( \vartheta+2 \right)  \left( 
\vartheta+1 \right)  \left( 2\,\vartheta+3 \right) ^{2}
\end{align*}\end{minipage}&\textit{13, 128, 2650}\\

$3$&\begin{minipage}{1cm}\begin{align*}{\vartheta}^{4}+2z \left( 4\,{\vartheta}^{4}+8\,{\vartheta}^{3}+37
\,{\vartheta}^{2}+33\,\vartheta+9 \right)\\ -36\,{z}^{2} \left( 92\,{
\vartheta}^{2}+184\,\vartheta+189 \right)  \left( \vartheta+1 \right) 
^{2}\\-20736\,{z}^{3} \left( \vartheta+2 \right)  \left( \vartheta+1
 \right)  \left( 2\,\vartheta+3 \right) ^{2}
 \end{align*}\end{minipage}&\textit{10, 24, 664}\\

$4$&\begin{minipage}{1cm}\begin{align*}{\vartheta}^{4}+z \left( 80+240\,{\vartheta}^{4}+480\,{\vartheta}^{3}+
592\,{\vartheta}^{2}+352\,\vartheta \right)\\ +2048\,{z}^{2} \left( 6\,{
\vartheta}^{2}+12\,\vartheta+5 \right)  \left( \vartheta+1 \right) ^{2
}\\-65536\,{z}^{3} \left( \vartheta+2 \right)  \left( \vartheta+1
 \right)  \left( 2\,\vartheta+3 \right) ^{2}
\end{align*}\end{minipage}&\textit{16, -110, 1744}\\

$5$&\begin{minipage}{1cm}\begin{align*}{\vartheta}^{4}+z \left( 72+243\,{\vartheta}^{4}+486\,{\vartheta}^{3}+
567\,{\vartheta}^{2}+324\,\vartheta \right)\\ +5832\,{z}^{2} \left( 3\,{
\vartheta}^{2}+6\,\vartheta+4 \right)  \left( \vartheta+1 \right) ^{2}
\\+78732\,{z}^{3} \left( \vartheta+2 \right)  \left( \vartheta+1
 \right)  \left( 2\,\vartheta+3 \right) ^{2}
\end{align*}\end{minipage}&\textit{9, -72, 748}\\

$6$&\begin{minipage}{1cm}\begin{align*}{\vartheta}^{4}+z \left( 180+575\,{\vartheta}^{4}+1150\,{\vartheta}^{3
}+1379\,{\vartheta}^{2}+804\,\vartheta \right)\\ +3168\,{z}^{2} \left( 
26\,{\vartheta}^{2}+52\,\vartheta+27 \right)  \left( \vartheta+1
 \right) ^{2}\\-20736\,{z}^{3} \left( \vartheta+2 \right)  \left( 
\vartheta+1 \right)  \left( 2\,\vartheta+3 \right) ^{2}
\end{align*}\end{minipage}&\textit{37, -570, 15270}\\

$1'$&\begin{minipage}{1cm}\begin{align*}{\vartheta}^{4}+z \left( 210+776\,{\vartheta}^{4}+1552\,{\vartheta}^{3
}+1738\,{\vartheta}^{2}+962\,\vartheta \right)\\ +324\,{z}^{2} \left( 
580\,{\vartheta}^{2}+1160\,\vartheta+747 \right)  \left( \vartheta+1
 \right) ^{2}\\+13436928\,{z}^{3} \left( \vartheta+1 \right) ^{2}
 \left( \vartheta+2 \right) ^{2}
\end{align*}\end{minipage}&\textbf{10, -872, 18328}\\

$2'$&\begin{minipage}{1cm}\begin{align*}{\vartheta}^{4}+z \left( 310+1016\,{\vartheta}^{4}+2032\,{\vartheta}^{
3}+2410\,{\vartheta}^{2}+1394\,\vartheta \right)\\ +500\,{z}^{2} \left( 
532\,{\vartheta}^{2}+1064\,\vartheta+563 \right)  \left( \vartheta+1
 \right) ^{2}\\+4000000\,{z}^{3} \left( \vartheta+1 \right) ^{2} \left( 
\vartheta+2 \right) ^{2}\end{align*}
\end{minipage}&\textbf{66, -1780, 69048}\\

$3'$&\begin{minipage}{1cm}\begin{align*}{\vartheta}^{4}+z \left( 152+368\,{\vartheta}^{4}+736\,{\vartheta}^{3}
+1020\,{\vartheta}^{2}+652\,\vartheta \right)\\ -8192\,{z}^{2} \left( {
\vartheta}^{2}+2\,\vartheta+6 \right)  \left( \vartheta+1 \right) ^{2}
\\-9437184\,{z}^{3} \left( \vartheta+1 \right) ^{2} \left( \vartheta+2
 \right) ^{2}
\end{align*}\end{minipage}&\textbf{68, 204, 125636/3}\\

$4'$&\begin{minipage}{1cm}\begin{align*}{\vartheta}^{4}-16\,z \left( 6\,{\vartheta}^{2}+6\,\vartheta-1
 \right)  \left( 2\,\vartheta+1 \right) ^{2}\\-1024\,{z}^{2} \left( 60\,
{\vartheta}^{2}+120\,\vartheta+97 \right)  \left( \vartheta+1 \right) 
^{2}\\-2097152\,{z}^{3} \left( \vartheta+1 \right) ^{2} \left( \vartheta
+2 \right) ^{2}
\end{align*}\end{minipage}&\textit{128,4084,382592}\\

$5'$&\begin{minipage}{1cm}\begin{align*}{\vartheta}^{4}-18\,z \left( 12\,{\vartheta}^{2}+12\,\vartheta+5
 \right)  \left( 3\,{\vartheta}^{2}+3\,\vartheta+1 \right)\\ +2916\,{z}^
{2} \left( 36\,{\vartheta}^{2}+72\,\vartheta+55 \right)  \left( 
\vartheta+1 \right) ^{2}\\-5038848\,{z}^{3} \left( \vartheta+1 \right) ^
{2} \left( \vartheta+2 \right) ^{2}
\end{align*}\end{minipage}&\textit{90, 2196, 151648}\\

$6'$&\begin{minipage}{1cm}\begin{align*}{\vartheta}^{4}+z \left( -46-1144\,{\vartheta}^{4}-2288\,{\vartheta}^{
3}-1590\,{\vartheta}^{2}-446\,\vartheta \right)\\ -4\,{z}^{2} \left( 
2300\,{\vartheta}^{2}+4600\,\vartheta+3621 \right)  \left( \vartheta+1
 \right) ^{2}\\-18432\,{z}^{3} \left( \vartheta+1 \right) ^{2} \left( 
\vartheta+2 \right) ^{2}
\end{align*}\end{minipage}&\textbf{266, 19320, 11433160/3}\\

$7$&\begin{minipage}{1cm}\begin{align*}{\vartheta}^{4}+z \left( 126+419\,{\vartheta}^{4}+838\,{\vartheta}^{3}
+985\,{\vartheta}^{2}+566\,\vartheta \right)\\ +196\,{z}^{2} \left( 250
\,{\vartheta}^{2}+500\,\vartheta+301 \right)  \left( \vartheta+1
 \right) ^{2}\\+28812\,{z}^{3} \left( 6\,\vartheta+11 \right)  \left( 6
\,\vartheta+7 \right)  \left( \vartheta+2 \right)  \left( \vartheta+1
 \right) 
\end{align*}\end{minipage}&\textbf{19, -276, 4455}\\

$8$&\begin{minipage}{1cm}\begin{align*}{\vartheta}^{4}+z \left( -24-248\,{\vartheta}^{4}-496\,{\vartheta}^{3}
-400\,{\vartheta}^{2}-152\,\vartheta \right)\\ +64\,{z}^{2} \left( 112\,
{\vartheta}^{2}+224\,\vartheta+187 \right)  \left( \vartheta+1
 \right) ^{2}\\-1536\,{z}^{3} \left( 6\,\vartheta+11 \right)  \left( 6\,
\vartheta+7 \right)  \left( \vartheta+2 \right)  \left( \vartheta+1
 \right) 
\end{align*}\end{minipage}&\textbf{32, 460, 16288}\\

$9$&\begin{minipage}{1cm}\begin{align*}{\vartheta}^{4}+z \left( 70+264\,{\vartheta}^{4}+528\,{\vartheta}^{3}+
586\,{\vartheta}^{2}+322\,\vartheta \right)\\ +100\,{z}^{2} \left( 228\,
{\vartheta}^{2}+456\,\vartheta+335 \right)  \left( \vartheta+1
 \right) ^{2}\\+40000\,{z}^{3} \left( 4\,\vartheta+5 \right)  \left( 4\,
\vartheta+7 \right)  \left( \vartheta+2 \right)  \left( \vartheta+1
 \right) 
\end{align*}\end{minipage}&\textbf{2, -44, 440}\\

$10$&\begin{minipage}{1cm}\begin{align*}{\vartheta}^{4}-2\,z \left( 23\,{\vartheta}^{2}+23\,\vartheta+5
 \right)  \left( 2\,\vartheta+1 \right) ^{2}\\-4\,{z}^{2} \left( 380\,{
\vartheta}^{2}+760\,\vartheta+657 \right)  \left( \vartheta+1 \right) 
^{2}\\-192\,{z}^{3} \left( 4\,\vartheta+5 \right)  \left( 4\,\vartheta+7
 \right)  \left( \vartheta+2 \right)  \left( \vartheta+1 \right) 
\end{align*}\end{minipage}&\textit{26, 348, 103520/9}\\

\end{longtable}

\textbf{Family:}
\begin{align*}
&Q_4(s_1,s_2,c,\lambda)=4\vartheta^4(s_1 -s_2)^4-z(s_1-s_2)^2\left(4(s_1^2-10s_1s_2+s_2^2)(\vartheta^4+2\vartheta^3)\right)\\& -z(s_1-s_2)^2\vartheta^2\left(5(s_1^2+s_2^2)+2s_1s_2(4\lambda^2-45)+8c(s_1+s_2)\right)\\&-z(s_1-s_2)^2\vartheta\left(s_1^2+s_2^2+2s_1s_2(4\lambda^2-25)+8c(s_1+s_2)\right)\\&-z(s_1-s_2)^2\left(s_1s_2(3\lambda^2-11)+2c(s_1+s_2)+4c^2
\right)\\&-4z^2s_1s_2(2\vartheta+2+\lambda)(2\vartheta+2-\lambda)(2(s_1^2-4s_1s_2+s_2^2)(\vartheta^2+2\vartheta))\\&-4z^2s_1s_2(2\vartheta+2+\lambda)(2\vartheta+2-\lambda)
(3(s_1^2+s_2^2)+s_1s_2(\lambda^2-11)+2c(s_1+s_2))
\\&-4z^3s_1^2s_2^2(2\vartheta+2+\lambda)(2\vartheta+2-\lambda)(2\vartheta+4+\lambda)(2\vartheta+4-\lambda)
\end{align*}

\textbf{Riemann scheme:} \[\begin{Bmatrix}
0&1&-\frac{(s-1)^2}{4s}&\infty\\[0.1cm] \hline\\[-0.25cm]\begin{array}{c} 0
\\\noalign{\medskip}0
\\\noalign{\medskip}0
\\\noalign{\medskip}0
\\\noalign{\medskip} \end{array}&\begin{array}{c} 0
\\\noalign{\medskip}1
\\\noalign{\medskip}1
\\\noalign{\medskip}2
\\\noalign{\medskip} \end{array}&\begin{array}{c} -\frac{1}{2}
\\\noalign{\medskip}0
\\\noalign{\medskip}1
\\\noalign{\medskip}\frac{3}{2}
\\\noalign{\medskip}\end{array}&
\begin{array}{c} 1-\frac{\lambda}{2}
\\\noalign{\medskip}1+\frac{\lambda}{2}
\\\noalign{\medskip}2-\frac{\lambda}{2}
\\\noalign{\medskip}2+\frac{\lambda}{2}
\\\noalign{\medskip}\end{array}
\end{Bmatrix}.\]

\textbf{CY-operators:}
\begin{longtable}{c c c}
\textbf{Number}&\textbf{Operator}&\textbf{Source}\\\hline\hline

$1''$&\begin{minipage}{1cm}\begin{align*}
{\vartheta}^{4}-z \left(2320\,{\vartheta}^{4}+4640\,{\vartheta}^
{3}+4228\,{\vartheta}^{2}+1908\,\vartheta+360 \right)\\ +1024\,{z}^{2}
 \left( 4\,\vartheta+5 \right)  \left( 4\,\vartheta+3 \right)  \left( 
97\,{\vartheta}^{2}+194\,\vartheta+144 \right)\\ -1327104\,{z}^{3}
 \left( 4\,\vartheta+5 \right)  \left( 4\,\vartheta+7 \right)  \left( 
4\,\vartheta+3 \right)  \left( 4\,\vartheta+9 \right) 
\end{align*}\end{minipage}&\textit{196, 17212, 2993772}\\

$2''$&\begin{minipage}{1cm}\begin{align*}
{\vartheta}^{4}-z \left(2128\,{\vartheta}^{4}+4256\,{\vartheta}^
{3}+3076\,{\vartheta}^{2}+948\,\vartheta+116 \right)\\ +16\,{z}^{2} \left( 4
\,\vartheta+5 \right)  \left( 4\,\vartheta+3 \right)  \left( 1016\,{
\vartheta}^{2}+2032\,\vartheta+1585 \right)\\ -32000\,{z}^{3} \left( 4\,
\vartheta+5 \right)  \left( 4\,\vartheta+7 \right)  \left( 4\,
\vartheta+3 \right)  \left( 4\,\vartheta+9 \right) 
\end{align*}\end{minipage}&\textit{-444, 57215, -19050964}\\

$3''$&\begin{minipage}{1cm}\begin{align*}
{\vartheta}^{4}+z \left(128\,{\vartheta}^{4}+256\,{\vartheta}^{3}
+1288\,{\vartheta}^{2}+1160\,\vartheta+300 \right)\\ -144\,{z}^{2} \left( 4
\,\vartheta+5 \right)  \left( 4\,\vartheta+3 \right)  \left( 368\,{
\vartheta}^{2}+736\,\vartheta+657 \right)\\ -1327104\,{z}^{3} \left( 4\,
\vartheta+5 \right)  \left( 4\,\vartheta+7 \right)  \left( 4\,
\vartheta+3 \right)  \left( 4\,\vartheta+9 \right) 
\end{align*}\end{minipage}&\textit{352, 18676, 15001120/3}\\

$4''$&\begin{minipage}{1cm}\begin{align*}
{\vartheta}^{4}+z \left(3840\,{\vartheta}^{4}+7680\,{\vartheta}^
{3}+9280\,{\vartheta}^{2}+5440\,\vartheta +1200\right)\\ +32768\,{z}^{2}
 \left( 4\,\vartheta+3 \right)  \left( 4\,\vartheta+5 \right)  \left( 
6\,{\vartheta}^{2}+12\,\vartheta+5 \right)\\ -4194304\,{z}^{3} \left( 4
\,\vartheta+5 \right)  \left( 4\,\vartheta+7 \right)  \left( 4\,
\vartheta+3 \right)  \left( 4\,\vartheta+9 \right) 
\end{align*}\end{minipage}&\textbf{480, -16536, 4215904}\\

$5''$&\begin{minipage}{1cm}\begin{align*}
{\vartheta}^{4}-z \left(3888\,{\vartheta}^{4}+7776\,{\vartheta}
^{3}+8748\,{\vartheta}^{2}+4860\,\vartheta+1044 \right)\\ +11664\,{z}^{2}
 \left( 4\,\vartheta+3 \right)  \left( 4\,\vartheta+5 \right)  \left( 
24\,{\vartheta}^{2}+48\,\vartheta+29 \right)\\ -5038848\,{z}^{3} \left( 
4\,\vartheta+5 \right)  \left( 4\,\vartheta+7 \right)  \left( 4\,
\vartheta+3 \right)  \left( 4\,\vartheta+9 \right) 
\end{align*}\end{minipage}&\textit{252,-19512,1162036}\\

$6''$&\begin{minipage}{1cm}\begin{align*}
{\vartheta}^{4}+z \left(9200\,{\vartheta}^{4}+18400\,{\vartheta}
^{3}+21628\,{\vartheta}^{2}+12428\,\vartheta+2712 \right)\\ +9216\,{z}^{2}
 \left( 4\,\vartheta+5 \right)  \left( 4\,\vartheta+3 \right)  \left( 
143\,{\vartheta}^{2}+286\,\vartheta+144 \right)\\ -1327104\,{z}^{3}
 \left( 4\,\vartheta+5 \right)  \left( 4\,\vartheta+7 \right)  \left( 
4\,\vartheta+3 \right)  \left( 4\,\vartheta+9 \right) 
\end{align*}\end{minipage}&\textbf{964, -111140, 85888580/3}\\

$7'$&\begin{minipage}{1cm}\begin{align*}
{\vartheta}^{4}-z \left(1000\,{\vartheta}^{4}+2000\,{\vartheta}^
{3}+1618\,{\vartheta}^{2}+618\,\vartheta +102\right)\\ +12\,{z}^{2} \left( 6
\,\vartheta+5 \right)  \left( 6\,\vartheta+7 \right)  \left( 419\,{
\vartheta}^{2}+838\,\vartheta+647 \right)\\ -7056\,{z}^{3} \left( 6\,
\vartheta+5 \right)  \left( 6\,\vartheta+13 \right)  \left( 6\,
\vartheta+7 \right)  \left( 6\,\vartheta+11 \right) 
\end{align*}\end{minipage}&\textbf{166, 8076, 1016100}\\

$8'$&\begin{minipage}{1cm}\begin{align*}
{\vartheta}^{4}-z \left(1792\,{\vartheta}^{4}+3584\,{\vartheta}^
{3}+4192\,{\vartheta}^{2}+2400\,\vartheta+528 \right)\\ +768\,{z}^{2}
 \left( 6\,\vartheta+5 \right)  \left( 6\,\vartheta+7 \right)  \left( 
31\,{\vartheta}^{2}+62\,\vartheta+34 \right)\\ -36864\,{z}^{3} \left( 6
\,\vartheta+5 \right)  \left( 6\,\vartheta+13 \right)  \left( 6\,
\vartheta+7 \right)  \left( 6\,\vartheta+11 \right) 
\end{align*}\end{minipage}&\textbf{-128, -5148, -263808}\\

$9'$&\begin{minipage}{1cm}\begin{align*}
{\vartheta}^{4}-z \left(912\,{\vartheta}^{4}+1824\,{\vartheta}^{
3}+1796\,{\vartheta}^{2}+884\,\vartheta+180 \right)\\ +176\,{z}^{2} \left( 8
\,\vartheta+7 \right)  \left( 8\,\vartheta+9 \right)  \left( 24\,{
\vartheta}^{2}+48\,\vartheta+35 \right)\\ -6400\,{z}^{3} \left( 8\,
\vartheta+17 \right)  \left( 8\,\vartheta+9 \right)  \left( 8\,
\vartheta+15 \right)  \left( 8\,\vartheta+7 \right) 
\end{align*}\end{minipage}&\textbf{60, 960, 61780}\\

$10'$&\begin{minipage}{1cm}\begin{align*}
{\vartheta}^{4}+z \left(1520\,{\vartheta}^{4}+3040\,{\vartheta}^{
3}+3628\,{\vartheta}^{2}+2108\,\vartheta +468\right)\\ +48\,{z}^{2} \left( 8
\,\vartheta+9 \right)  \left( 8\,\vartheta+7 \right)  \left( 184\,{
\vartheta}^{2}+368\,\vartheta+183 \right)\\ -2304\,{z}^{3} \left( 8\,
\vartheta+17 \right)  \left( 8\,\vartheta+9 \right)  \left( 8\,
\vartheta+15 \right)  \left( 8\,\vartheta+7 \right) 
\end{align*}\end{minipage}&\textbf{124, -3752, 2152276/9}\\
\end{longtable}

\textbf{Family:}
\begin{align*}
&Q_5(s_1,s_2,c,\lambda)=16s_1^2s_2^2\vartheta^4-4s_1s_2z(8(s_1+s_2)(\vartheta^4+2\vartheta^3))\\&-4s_1s_2z\vartheta^2\left(2(s_1+s_2)(a-a^2+9)+8c\right)\\&-4s_1s_2z\vartheta\left(2(s_1+s_2)(a-a^2+5)+8c
\right)\\&-4s_1s_2z\left((s_1+s_2)(a-a^2+2)+4c(a^2-a+1)
\right)\\&+z^2\left(16(s_1^2+4s_1s_2+s_2^2)(\vartheta^4+4\vartheta^3)\right)\\&+z^2\vartheta^2\left(4(s_1^2+s_2^2)(2a-2a^2+23)+32s_1s_2(a-a^2+15)+32c(s_1+s_2)\right)\\&+z^2\vartheta\left(8(s_1^2+s_2^2)(2a-2a^2+7)+64s_1s_2(a-a^2+7)+64c(s_1+s_2)\right)\\&+z^2\left((s_1^2+s_2^2)(a^4-2a^3-7a^2+8a+12)+2s_1s_2(84+20a-21a^2-a^4+2a^3)\right)\\&+z^2\left(8c(s_1+s_2)(a^2-a+4)+16c^2\right)\\&-2z^3(2\vartheta+3)^2\left(4(s_1+s_2)(\vartheta^2+3\vartheta)\right)\\&-2z^3(2\vartheta+3)^2\left((s_1+s_2)(3a-3a^2+11)+4c
\right)\\&+z^4(2\vartheta+3)(2\vartheta+5)(2\vartheta+3+2a)(2\vartheta+5-2a)
\end{align*}

\textbf{Riemann scheme:} \[\begin{Bmatrix}
0&1&s&\infty\\[0.1cm] \hline\\[-0.25cm]\begin{array}{c} 0
\\\noalign{\medskip}0
\\\noalign{\medskip}0
\\\noalign{\medskip}0
\\\noalign{\medskip} \end{array}&\begin{array}{c}-\frac{1}{2}
\\\noalign{\medskip}0
\\\noalign{\medskip}1
\\\noalign{\medskip}\frac{3}{2}
\\\noalign{\medskip} \end{array}&\begin{array}{c}-\frac{1}{2}
\\\noalign{\medskip}0
\\\noalign{\medskip}1
\\\noalign{\medskip}\frac{3}{2}
\\\noalign{\medskip}\end{array}&\begin{array}{c}\frac{3}{2}
\\\noalign{\medskip}\frac{5}{2}
\\\noalign{\medskip}\frac{3}{2}+a
\\\noalign{\medskip}\frac{5}{2}-a
\\\noalign{\medskip}\end{array}
\end{Bmatrix}\] 

\textbf{CY-operators:} We seem to get CY-operators if $a\in\left\{\frac{1}{2},\frac{1}{3},\frac{1}{4},\frac{1}{6}\right\}$. To be more precise, we find
\setlength{\extrarowheight}{0.5mm}
\begin{longtable}{c c l}
\textbf{Nr.}&\textbf{Operator}&\textbf{Source}\\\hline\hline
\multirow{7}{*}{$1'$}&{$16\vartheta^4-8z\left(56\vartheta^4+112\vartheta^3+(132+14a(1-a))\vartheta^2)\right)$}&{}\\[0.5mm]
&$-8z\left((76+14a(1-a))\vartheta+17+4a(1-a)\right)$&$a=\frac{1}{2}:\ -20, 199, 5924$\\[0.5mm]
&$+4z^2\left(1432\vartheta^4+5728\vartheta^3+(10670+716a(1-a))\vartheta^2\right)$&$a=\frac{1}{3}:\ -33, 1095/2, 29693$\\[0.5mm]
&$+4z^2\left((9884+1432a(1-a))\vartheta\right)$&$a=\frac{1}{4}:\ -76, 2958, 415420$\\[0.5mm]&$+4z^2\left(64a^3-32a^4-868a^2+836a+3681\right)$&$a=\frac{1}{6}:\ -492, 128514, 136094428$\\[0.5mm] 
&$-324\,{z}^{3} \left( 2\,\vartheta+3 \right) ^{2} \left( 
28\,{\vartheta}^{2}+84\,\vartheta+21\,a(1-a)+80 \right)$
&\\[0.5mm]
&$+6561\,
{z}^{4} \left( 2\,\vartheta+5 \right)  \left( 2\,\vartheta+3 \right) 
 \left( 2\,\vartheta+3+2\,a \right)  \left( 2\,\vartheta+5-2\,a
 \right)$&\\[0.5mm]\hline

\multirow{7}{*}{$2'$}&{$16\,{\vartheta}^{4}-8z \left(88\,{\vartheta}^{4}+176\,{\vartheta}^{
3}+2(104+11a(1-a))\vartheta^2\right)$}&\\[0.5mm]
&$-8z\left(2(60+11a(1-a))\vartheta+27+6a(1-a)\right)$&$a=\frac{1}{2}:\  28, -28, -1036$\\[0.5mm] 
&$+4{z}^{2} \left(2936\vartheta^4+11744\vartheta^3+(20822+1468a(1-a))\vartheta^2\right)$&$a=\frac{1}{3}:\ $\textit{45, -585/4, -7080}\\[0.5mm]
&$+4z^2\left((18156+2936a(1-a))\vartheta\right)$&$a=\frac{1}{4}:\ $\textit{100,-1285,-126580}\\[0.5mm]&$+4z^2\left(8a^3-4a^4-1612a^2+1608a+6417 
 \right)$&$a=\frac{1}{6}:\ $\textit{612, -87525, -51318900}\\[0.5mm]
&$ -500\,{z}^{3} \left( 2\,\vartheta+3 \right) ^{2} \left( 44\,{
\vartheta}^{2}+132\,\vartheta+33a(1-a) +126\right)$&\\[0.5mm] &$+15625\,{z
}^{4} \left( 2\,\vartheta+5 \right)  \left( 2\,\vartheta+3 \right) 
 \left( 2\,\vartheta+3+2\,a \right)  \left( 2\,\vartheta+5-2\,a
 \right)$&\\[0.5mm]\hline

\multirow{7}{*}{$3'$}&{$\vartheta^4-z(40\vartheta^4+80\vartheta^3+(94+10a(1-a))\vartheta^2)$}&\\[0.5mm]
&$-z((54+10a(1-a))\vartheta+12+3a(1-a))$&$a=\frac{1}{2}:\ -32, -284, -8736$\\[0.5mm]
&$+3z^2\left(176\vartheta^4+704\vartheta^3+(1188+88a(1-a))\vartheta^2\right)$&$a=\frac{1}{3}:\ -54, -864, -40552$\\[0.5mm]
&$+3z^2\left((968+176a(1-a))\vartheta\right)$&$a=\frac{1}{4}:\ -128, -5232, -1546624/3$\\[0.5mm]&$+3z^2(3a^4-6a^3-89a^2+92a+320)$&$a=\frac{1}{6}:\ -864, -261684, -147560800$\\[0.5mm]
&$-32\,{z}^{3} \left( 2\,
\vartheta+3 \right) ^{2} \left( 20\,{\vartheta}^{2}+60\,\vartheta+15\,
a(1-a)+57 \right)$&\\[0.5mm]
&$ +256\,{z}^{4} \left( 2\,\vartheta+5 \right) 
 \left( 2\,\vartheta+3 \right)  \left( 2\,\vartheta+3+2\,a \right) 
 \left( 2\,\vartheta+5-2\,a \right)$&\\[0.5mm]\hline 

\multirow{7}{*}{$4'$}&{$\vartheta^4-2z\left(24\vartheta^4+48\vartheta^3+(56+6a(1-a))\vartheta^2\right)$}&{}\\[0.5mm]
&$-2z\left((32+6a(1-a))\vartheta+7+2a(1-a)\right)$&$a=\frac{1}{2}:\ $\textit{48, -922, 32368}\\[0.5mm]
&$+4z^2\left(152\vartheta^4+608\vartheta^3+(926+76a(1-a))\vartheta^2\right)$&$a=\frac{1}{3}:\ $\textit{84, -5313/2, 148820}\\[0.5mm]
&$+4z^2\left((636+152a(1-a))\vartheta\right)$&$a=\frac{1}{4}:\ $\textit{208, -15150, 1863312}\\[0.5mm]&$+4z^2\left(8a^4-16a^3-64a^2+72a+169\right)$&$a=\frac{1}{6}:\ $\textit{1488, -705102, 517984144}\\[0.5mm]
&$-16\,{z}^{3} \left( 2\,
\vartheta+3 \right) ^{2} \left( 12\,{\vartheta}^{2}+36\,\vartheta-9\,{
a}^{2}+34+9\,a \right)$
&\\[0.5mm]&$ +16\,{z}^{4} \left( 2\,\vartheta+5 \right) 
 \left( 2\,\vartheta+3 \right)  \left( 2\,\vartheta+3+2\,a \right) 
 \left( 2\,\vartheta+5-2\,a \right)$&\\[0.5mm]\hline\newpage

\multirow{7}{*}{$5'$}&{$16\vartheta^4+24z\left(24\vartheta^4+48\vartheta^3+(56+6a(1-a))\vartheta)\right)$}&{}\\[0.5mm]
&$+24z\left((32+6a(1-a))\vartheta+7+2a(1-a)\right)$&$a=\frac{1}{2}:\ $\textit{36, -765, 62596/3}\\[0.5mm]
&$+4z^2\left(1080\vartheta^4+4320\vartheta^3+(5670+540a(1-a))\vartheta^2\right)$&$a=\frac{1}{3}:\ $\textit{63, -2205, 96866}\\[0.5mm]
&$+4z^2\left((2700+1080a(1-a))\vartheta\right)$&$a=\frac{1}{4}:\ $\textit{156, -12588, 1229332}\\[0.5mm]&$+4z^2\left(108a^4-216a^3-324a^2+432+225\right)$
&$a=\frac{1}{6}: $\textit{1116, -587268, 349462868}\\[0.5mm]&$-324\,{z}^{3} \left( 2\,\vartheta+3 \right) ^{2} \left( 12\,{
\vartheta}^{2}+36\,\vartheta+9a(1-a)+34\right)$&\\[0.5mm]
&$+729\,{z}^{4}
 \left( 2\,\vartheta+5 \right)  \left( 2\,\vartheta+3 \right)  \left( 
2\,\vartheta+3+2\,a \right)  \left( 2\,\vartheta+5-2\,a \right)$&\\[0.5mm]\hline

\multirow{7}{*}{$6'$}&{$16\vartheta^4-8z\left(136\vartheta^4+272\vartheta^3+(316+34a(1-a))\vartheta^2\right)$}&{}\\[0.5mm]
&$-8z\left((180+34a(1-a))\vartheta+39+12a(1-a)\right)$&$a=\frac{1}{2}:\ -76, -2002, -92996$\\[0.5mm]
&$+4z^2\left(4632\vartheta^4+18528\vartheta^3+(27342+2316a(1-a))\vartheta^2\right)$&$a=\frac{1}{3}:\ -135, -22815/4, -417685$\\[0.5mm]
&$+4z^2\left((17628+4632a(1-a))\vartheta\right)$&$a=\frac{1}{4}:\ -340, -31985, -15174100/3$\\[0.5mm]&$+4z^2\left(288a^4-576a^3-1860a^2+2148a+4209\right)$
&$a=\frac{1}{6}:\ -2484, -1446309, -1327731388$\\[0.5mm]&$-4\,{z}^{3} \left( 2\,\vartheta+3 \right) ^{2} \left( 68
\,{\vartheta}^{2}+204\,\vartheta+51a(1-a)+192 \right)$&\\[0.5mm]
&$ +{z}^{4
} \left( 2\,\vartheta+5 \right)  \left( 2\,\vartheta+3 \right) 
 \left( 2\,\vartheta+3+2\,a \right)  \left( 2\,\vartheta+5-2\,a
 \right)$&\\[0.5mm]\hline 
\end{longtable}

\bibliographystyle{amsalpha}

\bibliography{Literatur6}{}

\end{document}